\theoremstyle{plain}
\newtheorem{theorem}{Theorem}[section]
\newtheorem{lemma}[theorem]{Lemma}
\newtheorem{prop}[theorem]{Proposition}
\newtheorem{corollary}[theorem]{Corollary}
\theoremstyle{definition}
\newtheorem{remark}{Remark}
{\theoremstyle{plain}\newtheorem{assumption}{Assumption}}
{\theoremstyle{plain}}
\newcommand{\mc}[1]{\mathcal{#1}}
\newcommand{\contig}{\triangleleft}
\newcommand{\cov}{{\rm Cov}}
\newcommand{\darg}[1]{\,\mathrm{d}#1}
\newcommand{\dd}{{\rm d}}
\newcommand{\define}{\coloneqq}
\newcommand{\dmu}{\,\mathrm{d}\mu}
\newcommand{\E}{\mathbb{E}}
\newcommand{\effscrarg}[1]{\tilde{\ell}_{#1}}
\newcommand{\EP}{\mathbb{P}_n}
\newcommand{\eps}{\varepsilon}
\newcommand{\fracrootn}{\frac{1}{\sqrt{n}}}
\newcommand{\G}{\mathbb{G}_n}
\newcommand{\given}{\,|\,}
\newcommand{\half}{\tfrac{1}{2}}
\newcommand{\IP}[2]{\left\langle {#1}\, ,\, {#2} \right\rangle}
\newcommand{\limn}{\lim_{n\to\infty}} 
\newcommand{\meanin}{\frac{1}{n}\sumin}
\newcommand{\N}{\mathbb{N}}
\newcommand{\norm}[1]{\|#1\|}
\newcommand{\normal}{{\rm N}}
\newcommand{\PP}{\mathbb{P}}
\newcommand{\pl}{{\rm pl}}
\newcommand{\real}{\mathbb{R}}
\newcommand{\R}{\mathbb{R}} 
\newcommand{\sub}{{\rm sub}}
\newcommand{\sumin}{\sum_{i=1}^n}
\newcommand{\tr}{{\rm t}}
\newcommand{\weakconv}{\rightsquigarrow}
\newcommand\blfootnote[1]{%
  \begingroup
  \renewcommand\thefootnote{}\footnote{#1}%
  \addtocounter{footnote}{-1}%
  \endgroup
}
\begin{document}

\title{\sc \Large Semiparametrics via parametrics and contiguity
}

\author{{Adam Lee\thanks{BI Norwegian Business School, \href{mailto:adam.lee@bi.no}{adam.lee@bi.no}}, Emil A. Stoltenberg\thanks{University of Oslo, \href{emilas@math.uio.no}{emilas@math.uio.no}} and Per A. Mykland\thanks{University of Chicago, \href{mykland@uchicago.edu}{mykland@uchicago.edu}} }
}


\maketitle
\thispagestyle{empty}

\begin{abstract}
\onehalfspacing
\noindent

Inference on the parametric part of a semiparametric model is no trivial task. If one approximates the infinite dimensional part of the semiparametric model by a parametric function, one obtains a parametric model that is in some sense close to the semiparametric model and inference may proceed by the method of maximum likelihood. Under regularity conditions, the ensuing maximum likelihood estimator is asymptotically normal and efficient in the approximating parametric model. Thus one obtains a sequence of asymptotically normal and efficient estimators in a sequence of growing parametric models that approximate the semiparametric model and, intuitively, the limiting {`}semiparametric{'} estimator should be asymptotically normal and efficient as well. In this paper we make this intuition rigorous: we move much of the semiparametric analysis back into classical parametric terrain, and then translate our parametric results back to the semiparametric world by way of contiguity. Our approach departs from the conventional sieve literature by being more specific about the approximating parametric models, by working not only {\it with} but also {\it under} these when treating the parametric models, and by taking full advantage of the mutual contiguity that we require between the parametric and semiparametric models. 
We illustrate our theory with two canonical examples of semiparametric models, namely the partially linear regression model and the Cox regression model. An upshot of our theory is a new, relatively simple, and rather parametric proof of the efficiency of the Cox partial likelihood estimator.

\noindent 

\bigskip \noindent \textit{MSC2020 subject classifications}: Primary 62E20, 62G05; secondary 62G20.

\bigskip \noindent \textit{Keywords and phrases}: 
Asymptotic equivalence, efficiency, Cox model, Fisher information, maximum likelihood, partially linear regression, profile likelihood, sieves

\end{abstract}

\blfootnote{Adam Lee would like to thank participants at various conferences
for their helpful comments. Emil A. Stoltenberg would like to the Department of Data Science
at BI Norwegian Business School where the greater part of this work was carried out. Per
A. Mykland would like to thank the (U.S.) National Science Foundation for financial support
under grant DMS-2413952.
}

\clearpage

\section{Introduction}\label{sec::intro}
Drawing valid inference about the parametric component of a semiparametric model is often a challenging task. 
To deal with some of these challenges, we explore a simplifying strategy (inspired by research in high-frequency econometrics, see \citet{mykland2009inference}) in which the semiparametric problem is made (almost) fully parametric. The idea is to pretend that the data stem from a parametric distribution, allowing one to derive estimators and study their properties using standard parametric techniques. We let the these parametric distributions grow in such a way that they are mutually contiguous with respect to the full semiparametric distribution; and, finally, we use contiguity and Le Cam{'}s third lemma to switch the analysis back to the semiparametric model in the limit. 

The idea of approximating semiparametric models with growing parametric models is not new, as there is a well developed literature on sieves (\citet{grenander1981abstract}; see \citet{shen1997methods} and \citet{chenshen98} for seminal contributions to this literature, and \citet{chen07survey} for an excellent review). Our framework parallels the conventional sieve literature in its use of growing parametric models, but differs by carrying out the analysis {\it under} these parametric models themselves, i.e., assuming that the data stem from a member of such a parametric model. 
As we shall see, combining this analysis under parametric models with the use of contiguity to return to the semiparametric setting makes our approach -- which we call \emph{contiguous sieves} -- markedly different from that of \emph{conventional sieves} (i.e., growing parametric models, analysed under the same semiparametric distribution at all steps).
In the examples we have considered this approach allows us to establish the asymptotic normality and efficiency of semiparametric maximum likelihood estimators under conditions which are relatively user-friendly and straightforward to verify, essentially because they require just a little more than standard parametric likelihood theory. In particular, we are able to show asymptotic efficiency of semiparametric maximum likelihood estimators in a broad class of models without imposing any empirical process type conditions. 
Working with contiguous parametric submodels has several advantages. There is no ambiguity in defining a likelihood function (assuming the model is dominated), and based on this likelihood function, estimators can be derived or defined without difficulty. Carrying out the analysis {\it as if} these submodels were the models generating the data, enables us to work as though we have a correctly specified parametric model at each step; consequently, the required regularity conditions for these estimators to be asymptotically normal and efficient can be verified similarly as in classical parametric maximum likelihood theory. 
Finally, since the parametric distributions are constructed such that they are mutually contiguous to the true semiparametric distribution, we can transfer the analysis back to the semiparametric distribution in the limit, thus obtaining asymptotic results under the true distribution.

The article proceeds as follows. In Section~\ref{sec:setup} we outline the general setup we work in and motivate our approach. Section~\ref{sec:assumps} introduces the assumptions we work under. Sections~\ref{sec:asymp-eff} \&~\ref{sec:sieve-pl} contain our theoretical results. In Section~\ref{sec:applications} we provide a detailed analysis of two examples to demonstrate the application of our approach: the partially linear (regression) model and the Cox model. As we exemplify with the Cox model, parametric approximations might also be used as a purely theoretical tool (and not for estimation); this allows for efficiency proofs that we believe to be simpler than those currently available. Section~\ref{sec:concl} concludes.

\section{General setup and parametric theory}\label{sec:setup}
Let $X_1, \ldots, X_n$ be i.i.d.~replicates of $X$, where $X$ has distribution $P_0 \coloneqq P_{\theta_0, \eta_0}$ belonging to a semiparametric family of distributions $\mc{P}= \{P_{\theta, \eta}: \theta\in \Theta, \eta\in \mc{H}\}$, where $\Theta\subset \real^p$ for some $p \geq 1$, and $\mc{H}$ is a space of infinite dimension. We suppose that $\mc{P}$ is dominated by some $\sigma$-finite measure $\mu$, and write $p_{\theta,\eta}$ for the densities. The problem is to do inference on $\theta$ in the presence of the infinite dimensional nuisance parameter $\eta$. The simplifying strategy discussed in the introduction involves the construction of certain parametric submodels: For each $m \geq 1$, let $\mc{H}_m$ be a family of parametric functions $\eta_{\gamma}$ indexed by a parameter $\gamma \in \Gamma_m \subset \real^{k_m}$.
Typically these shall be such that $\mc{H}_m \subset \mc{H}_{m + 1}$ for all $m$ with $\cup_{m \geq 1} \mc{H}_m$ dense in $\mc{H}$ in an appropriate topology. 
Let $T_m \colon \mc{H}_m \to \Gamma_m$ be an isomorphism between the function space $\mc{H}_m$ and $\Gamma^{m}$ so that $T_m \eta_{\gamma} = \gamma \in \Gamma_m$ and $T_{m}^{-1}\gamma = \eta_\gamma \in \mc{H}_m$. Then for each $m \geq 1$,  
\begin{equation}
\mc{P}_m = \{	P_{\theta,T_m^{-1}\gamma} \colon \theta \in \Theta , \gamma \in \Gamma_m\},
\notag
\end{equation}
is a $p + k_m$ dimensional parametric model, and $P_{\theta,T_m^{-1}\gamma}$ has density $p_{\theta, T_m^{-1}\gamma}$ with respect to $\mu$. As discussed above, the idea is to carry out the analysis {\it as if} $X_1,\ldots,X_n$ was an i.i.d.~sample from a member of $\mc{P}_m$, let $m$ increase with the sample size $n$, and use contiguity to switch the analysis back to $P_{\theta_0,\eta_0}$ in the limit. The natural choice for a parametric approximation is to suppose that the sample stems from $P_{m} \coloneqq P_{\theta_0, T_m^{-1}\gamma_{0}}$, where $\theta_0$ equals the true value in the big model $P_0$, and $(\gamma_{0})_{m \geq 1} = (\gamma_{0,m})_{m \geq 1}$ is a sequence of growing vectors so that $\eta_{\gamma_{0}}$ approaches the true $\eta_0$ as $m$ tends to infinity. To declutter the notation, we avoid indexing ($\gamma$ and) $\gamma_0$ by $m$, as the size of these parameter vectors should be clear from the context. The densities of $P_0$ and $P_m$ with respect to $\mu$ are denoted $p_0$ and $p_m$, respectively, with similar subscripts for the expectations, $\E_0\, g(X)  = \int g(x)\,\dd P_0(x)$ and $\E_m\, g(X)  = \int g(x)\,\dd P_m(x)$. The product measure arising from an i.i.d.~sample of size $n$ is indicated by a superscript $n$, e.g.: $P_{\theta,\eta}^n = P_{\theta,\eta} \times \cdots \times P_{\theta,\eta}$. The score functions with respect to $\theta$ and $\gamma$ under $\mc{P}_m$ are
\begin{equation}
\dot{\ell}_{\theta,T_m^{-1}\gamma} = \frac{\partial }{\partial \theta} \log p_{\theta,T_m^{-1}\gamma},\quad \text{and}\quad 
\dot{v}_{\theta,T_m^{-1}\gamma} = \frac{\partial }{\partial \gamma} \log p_{\theta,T_m^{-1}\gamma}.
\notag
\end{equation} 
When evaluated in $(\theta_0,\gamma_0)$, i.e., the `true values' under $\mathcal{P}_m$, we write $\dot{\ell}_m = \dot{\ell}_{\theta_0,T_m^{-1}\gamma_0}$ and $\dot{v}_m = \dot{v}_{\theta_0,T_m^{-1}\gamma_0}$.
The log-likelihood function under $\mc{P}_m$ is $(\theta,\gamma) \mapsto \sum_{i=1}^n \log p_{\theta,T_m^{-1}\gamma}(X_i)$ for $(\theta,\gamma) \in \Theta \times \Gamma_m$, and the corresponding maximum likelihood estimator (MLE) for $\theta$ is $\widehat{\theta}_{m,n}$. Let $i_m$ be the Fisher information matrix under $P_m$ and block partition it as follows
\begin{equation}
i_m = 
\begin{pmatrix}
i_{m,00} & i_{m,01}\\
i_{m,10} & i_{m,11}\\
\end{pmatrix}
= \E_m\, 
\begin{pmatrix}
\dot{\ell}_m\dot{\ell}_m^{\tr} & \dot{\ell}_m\dot{v}_m^{\tr}\\
\dot{v}_m\dot{\ell}_m^{\tr} & \dot{v}_m\dot{v}_m^{\tr}\\
\end{pmatrix}.
\notag
\end{equation}
The standard maximum likelihood theory for inference on $\theta$ in the presence of a finite dimensional (fixed $m$) nuisance parameter $\gamma \in \Gamma_m$ goes as follows: Under regularity conditions (see, e.g., \citet[Theorem 5.39, p.~65]{vandervaart1998asymptotic}), maximum likelihood estimators are asymptotically linear in the parametric efficient influence function, that is, for fixed $m$, 
\begin{equation}
\sqrt{n}(\widehat\theta_{m, n} - \theta_0) = \frac{1}{\sqrt{n}}\sum_{i=1}^n J_m^{-1}\tilde{\ell}_m(X_i) + o_{P_{m}}(1),
\label{eq:parametric-asymp-linear}
\end{equation} 
as $n$ tends to infinity, where $\tilde{\ell}_m$ is the efficient score function and $J_m$ its variance:
\begin{equation}
\tilde{\ell}_m = \dot{\ell}_m - i   _{m,01}i_{m,11}^{-1} \dot{v}_m = \dot{\ell}_m - \Pi_{m}\dot{\ell}_m,\quad\text{and}\quad J_m =
i_{m,00} - i_{m,01}i_{m,11}^{-1}i_{m,10},
\notag
\end{equation}
with $\Pi_m$ the orthogonal projection onto the linear span of $\{\dot{v}_{m, j}: j=1, \ldots, k_m\}$ in $L_2(P_m)$. 

Provided the model $\mc{P}_m$ is differentiable in quadratic mean at $(\theta_0,\gamma_0)$ and the efficient information matrix $J_m$ is nonsingular, then~\eqref{eq:parametric-asymp-linear} is equivalent to $\widehat\theta_{m, n}$ being the best regular estimator~\citep[e.g.,][Lemma~8.14]{vandervaart1998asymptotic}. This means that as $n$ tends to infinity, but $m$ remains fixed, $\sqrt{n}(\widehat\theta_{n, m} - \theta_0)$ converges in distribution under $P_{m}^n$ to a mean zero normal distribution with variance matrix $J_m^{-1}$, this being the smallest possible asymptotic variance matrix of any regular estimator (see Section~\ref{subsec:true_semipara} for a formal definition). 

To illustrate our general strategy as well as the definitions and the notation introduced above, consider the partially linear regression model where $X = (W,Y,Z)$ for a real valued outcome $Y$ that given covariates $(W,Z) = (w,z)$ follows the regression model
\begin{equation}
Y = \eta(z) + \theta w + \eps. 
\label{eq:plm1}
\end{equation}
Here $\eps$ is a noise term independent of $(W,Z)$, $\eta$ is an infinite dimensional nuisance parameter, and $\theta \in \real$ is the parameter on which we seek to make inference. If $\eps$ is assumed to be mean zero normal with variance $\sigma^2$ and the covariates have a density, then the observation $(W,Y,Z)$ has a density. This density, however, cannot be used to define a maximum likelihood estimator for $(\theta,\eta)$, because the maximiser for $\eta$ will just interpolate the data (see \citet[pp.~221--226]{andersen1993statistical} for a discussion of these difficulties). 
To overcome these issues we instead pretend that $Y$ given $(W,Z) = (w,z)$ stems from the parametric model 
\begin{equation}
Y = \beta_m(z)^{\tr}\gamma + \theta w + \eps,
\label{eq:plm2}
\end{equation}
where $\beta_m = (\beta_{m,1},\ldots,\beta_{m,k_m})^{\tr}$ is a collection of  orthonormal (or other basis) functions, $\gamma = (\gamma_1,\ldots,\gamma_{k_m})^{\tr}$ is a Euclidean parameter vector, and $\eps$ and $(W,Z)$ have the same distribution as the similarly denoted random variables above. Now, maximum likelihood estimation is a least squares problem, and we readily obtain a maximum likelihood estimator for $(\theta,\gamma)$, say $(\widehat{\theta}_{m,n},\widehat{\gamma}_{m,n})$. Assuming that the data in fact stem from the parametric model in~\eqref{eq:plm2}, we get from standard parametric likelihood theory that $\sqrt{n}(\widehat{\theta}_{m,n} - \theta)$ converges in distribution to a mean zero normal with variance $J_m^{-1}$, where $J_m = \sigma^{-2}\big(\E\, W^2 - \sum_{j=1}^{k_m} (\E\, \{\beta_{m,j}(Z)W\})^2 \big)$, and that this is the efficient information under the model in~\eqref{eq:plm2}. 

For parametric inference ($m$ fixed) the conclusion above is in many ways the end of the maximum likelihood story: $\sqrt{n}(\widehat{\theta}_{m,n} - \theta_0) \weakconv \normal(0,J_m^{-1})$, and $J_m^{-1}$ is the smallest possible variance (of a regular estimator). Part of the motivation for the present paper, however, is the observation that if we let $m$ tend to infinity, it is often the case that $J_{m} \to J$ for some nonsingular matrix $J$. It is then tempting to conjecture both that (i) $\sqrt{n}(\widehat\theta_{m_n, n} - \theta_0) \weakconv \normal(0,J^{-1})$ under $P_{m_n}^n$ for some subsequence $(m_n)_{n \geq 1}$ tending to infinity with $n$; and (ii) that $J$, being the limit of a sequence of efficient information matrices, must be semiparametrically efficient under $\mc{P}$. Of course, we desire $\sqrt{n}(\widehat\theta_{m_n, n} - \theta_0) \weakconv \normal(0,J^{-1})$ under $P_0^n$ rather than $P_{m_n}^n$; we shall subsequently restrict the class of approximating models such that we can make this change of measure.

A case in point is the partially linear model where it follows from Parseval{'}s identity that with $J_m$ the efficient information under~\eqref{eq:plm2}
\begin{equation}
J_m \to \sigma^{-2} \E\,( W - (\E\, \{W \given Z\} )^2,
\notag
\end{equation}
where the limit is positive (provided $W$ is not a.s.~equal to $\E\,(W \given Z)$). Here we also recognise the limit as the efficient information under the semiparametric model in~\eqref{eq:plm1} (see, e.g., \citet[p.~110]{BKRW98}), demonstrating one case in which the limit of (parametric) efficient information matrices is efficient.     

\begin{remark} A well known {`}two-steps weak convergence{'} lemma (see, e.g., \citet[Theorem~4.2, p.~25]{billingsley1968} or \citet[Theorem 4.28, p.~77]{kallenberg2002foundations}) says that if $Z_{m,n}\weakconv Z_m$ for each $m$ and $Z_{m} \weakconv Z$ and there is a subsequence $(m_n)_{n \geq 1}$ such that $\lim_m\limsup_{n}\Pr(\|Z_{m,n} - Z_{m_n,n}\| \geq \eps) \to 0$ for any $\eps > 0$, then $Z_{m_n,n}\weakconv Z$. With $Z_{m,n} = \sqrt{n}(\widehat{\theta}_{m,n} - \theta_0)$, as in the setting outlined in the above paragraph, it is tempting to attempt to couple this two-steps theorem with the mutual contiguity $P_0^n \contig \triangleright\, P_{m_n}^n$ in order to conclude that $Z_{m_n,n}$ converges weakly to $\normal(0,J^{-1})$ under $P_0^n$. A closer look at the proof of the two-steps lemma, however, reveals that this conclusion would require $P_0^n$ to be contiguous with respect to $P_m^n$ {\it for any fixed $m$}. But since both $P_0^n = P_0 \times \cdots \times P_0$ and $P_m^n = P_m \times \cdots \times P_m$ with $P_0 \neq P_m$, $P_0^n$ cannot be contiguous with respect to $P_m^n$ (for this impossibility, see~\citet[Theorem 1]{oosterhoff1979note} or \citet[Lemma~V.1.6, p.~286]{jacod2003limit}). Whether the two-steps lemma can be coupled with contiguity appears to be an open problem.  \end{remark}

\section{Assumptions for contiguity and efficiency}\label{sec:assumps} 
For our subsequent efficiency results to make sense, we need to impose some structure on the semiparametric model. In Section~\ref{subsec:true_semipara} we outline this structure, introduce some notation, and define what we mean by asymptotic efficiency. 
In Section~\ref{subsec:para_approx} the conditions we impose on the parametric approximations are presented, along with a few lemmata easing the verification of these. 

\subsection{The true semiparametric model and efficiency}\label{subsec:true_semipara}
We concentrate on smooth models for i.i.d.~data, as in the classical parametric theory. This is made precise by imposing a differentiability in quadratic mean (DQM) condition on the semiparametric model $\mc{P}$ \citep{lecam1986, BKRW98}. Let $B$ be a linear space. We will consider measures $P_{\theta_0 + \tau/\sqrt{n}, \eta_n(b)}$ for $h = (\tau, b)\in \real^p \times B$ where $\eta_n(b)\to \eta_0$ as $n$ tends to infinity, and $\eta_n(0) = \eta_0$. 

\begin{assumption}[DQM]\label{ass:DQM}
For each $h\in \real^p \times B$,  $P_{\theta + \tau/\sqrt{n}, \eta_n(b)}\in \mc{P}$ for all large enough $n$, and $\limn \int \{\sqrt{n}(p_{\theta_0 + \tau/\sqrt{n}, \eta_n(b)}^{1/2}-p_0^{1/2}) - \half Ah \, p_0^{1/2}\}^2 \,\dd\mu = 0$, where $A$ is a bounded linear map between $\real^p \times B$ and $L_2(P_0)$.  
\end{assumption} 
It follows from the convergence in Assumption~\ref{ass:DQM} that $Ah\in L_2(P_0)$ and $\int Ah \,\dd P_0 = 0$ \citep[see e.g.][Lemma 25.14, p.~363]{vandervaart1998asymptotic}. Since $A$ is assumed to be linear we can split out the contributions of the parametric parameter of interest from the infinite dimensional nuisance as $Ah = \tau^\tr \dot{\ell} + Db$, where $\dot{\ell}$ is the ordinary score function for $\theta$ in a model where the nuisance $\eta$ is fixed; while $D \colon B \to \real$ is a linear operator, and $Db$ has the interpretation of a score function for $\eta$ with $\theta$ fixed. 

An estimator $\widehat{\theta}_n$ of $\theta_0$ is said to be \emph{regular} if $\sqrt{n}(\widehat{\theta}_n - \theta_0 - \tau/\sqrt{n}) \weakconv L$ under $P_{\theta_0+\tau/\sqrt{n}, \eta_n(b)}$, for some law $L$ and each $(\tau, b)\in \R^p\times B$. Requiring regularity excludes superefficient estimators such as the Hodges-Le Cam estimator (see e.g. \citet[Example~8.1, p.~109]{vandervaart1998asymptotic}). The efficiency bound for regular estimators for estimation of $\theta_0$ is determined by the efficient score,
\begin{equation}
    \tilde{\ell} = \dot{\ell} - \Pi\dot{\ell},
\notag
\end{equation}
where $\Pi$ denotes the orthogonal projection onto the closure of $\{Db: b\in B\}$  in $L_2(P_0)$. The efficiency bound is the inverse (provided it exists) of the variance of $\tilde{\ell}$, 
\begin{equation}
J = \E_0\, \tilde{\ell}(X)\tilde{\ell}(X)^\tr.
\notag
\end{equation}
More precisely, provided $J$ is nonsingular, by the H{\'a}jek--Le Cam convolution theorem, the limiting distribution of any regular sequence of estimators can be represented by the convolution of a $\normal(0, J^{-1})$ with some probability distribution (see e.g. \citealp[][Theorem 3.3.2]{BKRW98}; \citealp[][Theorem 25.20 \& Lemma 25.25]{vandervaart1998asymptotic}). As such, any regular estimator whose limiting  distribution is $L = \normal(0, J^{-1})$ is called \emph{best regular}; this is what we refer to as asymptotic efficiency.

The discussion immediately above required $J$ to be nonsingular. In fact, this is necessary for the existence of regular estimators~\citep[Theorem~2, p.~194]{chamberlain86asymptotic} and therefore we assume this throughout the rest of the paper.
\begin{assumption}[Nonsingularity]\label{ass:nonsingular-effinfo} $J$ is nonsingular.
\end{assumption}

\subsection{The parametric approximations}\label{subsec:para_approx}
We now introduce our assumption on the parametric approximations $P_m = P_{\theta_0,T_m^{-1}\gamma_0}$ to $P_0 = P_{\theta_0,\eta_0}$. In particular, we assume that $P_0$ can be approximated, in an appropriate sense, by a sequence of contiguous alternatives, similar to those in Assumption \ref{ass:DQM}. Since $P_0$ is unknown, checking Assumptions~\ref{ass:sieve-approx} and~\ref{ass:effscr-approx} below entails in practice checking it for any member of $\mc{P}$. Formally, however, these assumptions are required to hold only for the specific member $P_0$ of $\mc{P}$ that generated the data. 

\begin{assumption}[Contiguity]\label{ass:sieve-approx} There is a subsequence $(m_n)$ and a function $g$ such that $\E_0\,g(X) = 0$, $\E_0\,g(X)^2$ is finite, and $\E_0\,g(X) \tilde{\ell}(X) = 0$, and the log-likelihood ratio satisfies
\begin{equation}
        \log \frac{\dd P_{m_n}^n }{\dd P_{0}^n} = \fracrootn\sumin g(X_i) - \half\E_0\,g(X_1)^2  + o_{P_0^n}(1).
\label{rem:sieve-DQM-LAN-equivalence:eq:LAN}       
\end{equation}
\end{assumption}
In our applications, we will seek that $g=0$, but the orthogonality $\E_0\,g(X) \tilde{\ell}(X) = 0$ is all that is actually needed for our results. The log-likelihood ratio expansion in~\eqref{rem:sieve-DQM-LAN-equivalence:eq:LAN} is equivalent to the densities $p_{m_n} = \dd P_{m_n}/\dd \mu$ satisfying the DQM type condition
\begin{equation}
      \limn \int \{\sqrt{n}(p_{m_n}^{1/2} - p_0^{1/2}) - \half g \, p_0^{1/2}\}^2 \,\dd \mu = 0,
      \label{eq:sieve-DQM}
   \end{equation}
for $g$ such that $\E_0\,g(X) \tilde{\ell}(X) = 0$, that is, for the function $g$ appearing in~\eqref{rem:sieve-DQM-LAN-equivalence:eq:LAN}. See for example~\citet[Corollary 75.9, p.~386]{strasser1985mathematical} or~\citet[Prop.~17.2, p.~584]{lecam1986} for the equivalence of~\eqref{rem:sieve-DQM-LAN-equivalence:eq:LAN} and \eqref{eq:sieve-DQM}. Thus, to check Assumption~\ref{ass:sieve-approx} it suffices to show either 
~\eqref{rem:sieve-DQM-LAN-equivalence:eq:LAN} or \eqref{eq:sieve-DQM}.

The log-likelihood ratio expansion in~\eqref{rem:sieve-DQM-LAN-equivalence:eq:LAN} is key to our results. In particular, since the data are assumed i.i.d.,~Assumption~\ref{ass:sieve-approx} and the central limit theorem yield 
\begin{equation}
\log \frac{\dd P_{m_n}^n }{\dd P_{0}^n}
\overset{P_0^n}\weakconv Z, \quad\text{where}\quad Z \sim \normal\big(- \half \E_0\, g(X)^2,\E_0\, g(X)^2\big).
\label{eq:contiguityGaussianLimit}
\end{equation} 
Since $\exp(Z)$ is positive and $\E\exp(Z) = 1$, we get from Le Cam{'}s first lemma that $P_0^n$ and $P_{m_n}^n$ are mutually contiguous (see \citet[Example~6.5, p.~89]{vandervaart1988large}). Provided $\sqrt{n}(\widehat{\theta}_{m_n,n} - \theta_0)$ converges jointly with  $\log (\dd P_0^n /\dd P_{m_n}^n)$ to a Gaussian limit under $P_{m_n}^n$, the asymptotic distribution of $\sqrt{n}(\widehat{\theta}_{m_n,n} - \theta_0)$ under $P_{0}^n$ can be recovered by Le Cam{'}s third lemma \cite[][Prop.~6.7, p.~90]{lecam1986}. As such, Assumption~\ref{ass:sieve-approx} restricts the possible change in the limiting distribution of $\sqrt{n}(\widehat{\theta}_{m_n,n} - \theta_0)$ resulting from a change of measure from the parametric $P_{m_n}$ back to the semiparametric $P_0$. In particular, provided the limiting distribution of $\sqrt{n}(\widehat{\theta}_{m_n,n} - \theta_0)$ under $P_{m_n}^n$ is orthogonal to $g$ (i.e. $\limn \cov(\sqrt{n}(\widehat{\theta}_{m_n,n} - \theta_0),n^{-1/2}\sum_{i=1}^n g(X_i) ) = 0$), changing the measure from $P_{m_n}$ to $P_0$ does not affect the limiting distribution. 

In some examples, approximating models satisfying Assumption~\ref{ass:sieve-approx} can be derived directly from the submodels used to establish Assumption~\ref{ass:DQM}, given the similarity between the DQM required in Assumption~\ref{ass:DQM} and equation~\eqref{eq:sieve-DQM}. We view this as a virtue of our contiguous sieve framework which closely links the submodels used to define semiparametric efficiency with those used to estimate $\theta$. 

\begin{remark} As discussed above, the weak convergence in~\eqref{eq:contiguityGaussianLimit}, hence Assumption~\ref{ass:sieve-approx}, implies mutual contiguity of $ P_{m_n}^n$ and $P_{0}^n$. Assumption \ref{ass:sieve-approx} also imposes additional structure in that it requires the log-likelihood ratios to admit a local asymptotic normality (LAN) type expansion. This allows us ensure that no bias is incurred when switching back to the semiparametric law $P_0$ by imposing only the orthogonality condition $\E_0\,g(X) \tilde{\ell}(X) = 0$. In principle, this LAN-type expansion is not required for our overall strategy: one requires only the contiguity of $P_0^n$ to $P_{m_n}^n$ and the joint convergence of a sequence of statistics and the log-likelihood ratio under $P_{m_n}^n$ to apply the general form of Le Cam's third lemma (e.g., \cite[Prop.~6.5, p.~88]{lecam1986} or \cite[Theorem 6.6, p.~90]{vandervaart1998asymptotic}). With such a weaker requirement (as compared to Assumption \ref{ass:sieve-approx}), however, providing conditions under which no bias obtains when switching back from $P_{m_n}$ to $P_0$ becomes more complex. 
\end{remark}

In addition to the contiguity condition in Assumption~\ref{ass:sieve-approx}, we require that the efficient scores in the parametric submodels approximate the efficient score of the full semiparametric model in a statistically relevant sense. As we will take our limits along the subsequence $(m_n)_{n\geq 1}$ of Assumption~\ref{ass:sieve-approx}, it is sufficient that this approximation holds along this subsequence. This is convenient as Assumption~\ref{ass:sieve-approx} implies that $P_{m_n} \to P_0$ in total variation (via \eqref{eq:sieve-DQM}), which can help to simplify the demonstration of~\eqref{eq:effscr-approx} (see Lemma~\ref{lemma:effscr-approx-based-on-Pmn-TV} below).

\begin{assumption}[Efficient score approximation]\label{ass:effscr-approx} The efficient scores $\tilde{\ell}_{m_n}$ exist and 
    \begin{equation}\label{eq:effscr-approx}
        \limn \int \|\tilde{\ell}_{m_n} p_{m_n}^{1/2} -  \tilde{\ell}p_0^{1/2}\|^2 \,\dd\mu = 0,
    \end{equation}
where $\norm{x} = (\sum_{j=1}^p x_j^2)^{1/2}$ is the Euclidean distance.
\end{assumption}
Explicitly performing the orthogonal projection to compute $\tilde{\ell}$ can, in many models, be quite challenging. Fortunately, one may verify Assumption~\ref{ass:effscr-approx} without explicitly performing this projection, as the following lemma demonstrates. Here the space $B$ and the linear operator $D$ are as defined in connection with Assumption~\ref{ass:DQM}.
\begin{lemma}\label{lem:score-approx-implies-effscr-approx} Suppose that the scores $\dot{\ell}_{m_n}$ and $\dot{v}_{m_n}$ exist in the DQM sense. If
    \begin{equation}\label{eq:score-approx-parametric}
      \limn \int \|\dot{\ell}_{m_n}\,p_{m_n}^{1/2} -  \dot{\ell}p_0^{1/2}\|^2 \,\dd\mu = 0,
    \end{equation} 
    and for any $b\in B$ there are vectors $a_{m_n} \in \real^{k_{m_n}}$, such that 
    \begin{equation}\label{eq:score-approx-nonparametric}
    \limn \int  (a_{m_n}^\tr\dot{v}_{m_n}\,p_{m_n}^{1/2} -  Db p_0^{1/2})^2 \,\dd\mu = 0,
    \end{equation}
then Assumption~\ref{ass:effscr-approx} holds.    
\end{lemma}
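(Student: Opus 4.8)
The plan is to transport everything into the single Hilbert space $L_2(\mu)$, where the inconvenient change of base measure from $P_{m_n}$ to $P_0$ becomes harmless. For a probability measure $Q\ll\mu$ with density $q$, the map $f\mapsto f q^{1/2}$ is an isometry of $L_2(Q)$ into $L_2(\mu)$, and under it the $L_2(Q)$-orthogonal projection of a score onto a tangent subspace becomes the $L_2(\mu)$-orthogonal projection onto the image subspace. Writing $s_n:=\dot\ell_{m_n}p_{m_n}^{1/2}$ and $s:=\dot\ell\,p_0^{1/2}$ (componentwise in the $p$ coordinates), and setting
\[
V_n:=\overline{\mathrm{lin}}\{\dot v_{m_n,1}p_{m_n}^{1/2},\dots,\dot v_{m_n,k_{m_n}}p_{m_n}^{1/2}\},\qquad
V:=\overline{\{Db\,p_0^{1/2}:b\in B\}},
\]
both regarded as closed subspaces of $L_2(\mu)$, one has $\tilde\ell_{m_n}p_{m_n}^{1/2}=s_n-Q_{V_n}s_n$ and $\tilde\ell\,p_0^{1/2}=s-Q_V s$, where $Q_{V_n},Q_V$ are the $L_2(\mu)$-orthogonal projections. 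Thus Assumption~\ref{ass:effscr-approx} is exactly $s_n-Q_{V_n}s_n\to s-Q_V s$ in $L_2(\mu)$. Since $s_n\to s$ by \eqref{eq:score-approx-parametric}, and since $Q_{V_n}s_n=Q_{V_n}(s_n-s)+Q_{V_n}s$ with $\|Q_{V_n}(s_n-s)\|\le\|s_n-s\|\to 0$, the whole statement reduces to showing $Q_{V_n}s\to Q_V s$ for the fixed element $s$, i.e.\ to convergence of the projections onto the moving subspaces $V_n$.

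One half of this is immediate from \eqref{eq:score-approx-nonparametric}. Given $w\in V$ and $\eps>0$, choose $b\in B$ with $\|w-Db\,p_0^{1/2}\|<\eps/2$ and then, by \eqref{eq:score-approx-nonparametric}, vectors $a_{m_n}$ with $\|a_{m_n}^\top\dot v_{m_n}p_{m_n}^{1/2}-Db\,p_0^{1/2}\|<\eps/2$ for $n$ large; since $a_{m_n}^\top\dot v_{m_n}p_{m_n}^{1/2}\in V_n$, every element of $V$ is a strong limit of elements of $V_n$. Applying this to $w=Q_V s$ gives $w_n\in V_n$ with $w_n\to Q_V s$, whence $\|s-Q_{V_n}s\|=\mathrm{dist}(s,V_n)\le\|s-w_n\|\to\|s-Q_V s\|$, so $\limsup_n\|s-Q_{V_n}s\|\le\|s-Q_V s\|$. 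The sequence $s-Q_{V_n}s$ is therefore bounded; along any weakly convergent subsequence $s-Q_{V_n}s\rightharpoonup\xi$, the orthogonality $s-Q_{V_n}s\perp V_n$ tested against approximants $w_n'\in V_n$, $w_n'\to w\in V$, gives $\langle\xi,w\rangle=\lim\langle s-Q_{V_n}s,w_n'\rangle=0$ by weak–strong convergence, so $\xi\perp V$.

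The complementary half is the main obstacle: one needs the lower bound $\liminf_n\|s-Q_{V_n}s\|\ge\|s-Q_V s\|$ — equivalently, that the weak limit $\xi$ above lies in $s-Q_Vs+V$, equivalently that the subspaces $V_n$ do not asymptotically acquire directions outside the semiparametric nuisance tangent space $V$ (``no overshoot''). This does not follow from \eqref{eq:score-approx-parametric}–\eqref{eq:score-approx-nonparametric} alone: a sieve whose nuisance scores drifted out of $\{Db:b\in B\}$ would make $\tilde\ell_{m_n}$ converge to something strictly ``more projected'' than $\tilde\ell$. One obtains it from the structure of the construction: each $\dot v_{m_n,l}$ is a nuisance score of $\mc P$ at the base point $P_{m_n}$, and since $p_{m_n}^{1/2}\to p_0^{1/2}$ in $L_2(\mu)$ (recall \eqref{eq:sieve-DQM}) the limiting reweighted directions land in the nuisance tangent space at $P_0$, namely $V$; in the natural situation where $V$ is taken to be the closed span of the limiting sieve directions this is automatic. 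Granting this, $V_n\to V$ in the Mosco sense and the standard projection-convergence theorem yields $Q_{V_n}s\to Q_V s$ for every fixed $s$; feeding this into the reduction of the first paragraph gives $s_n-Q_{V_n}s_n\to s-Q_V s$, i.e.\ \eqref{eq:effscr-approx} (the vector-valued case following coordinate by coordinate, as the Euclidean norm decouples). I expect the no-overshoot step to be the one requiring the most care to state precisely.
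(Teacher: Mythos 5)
Your proposal follows essentially the same route as the paper: the paper's entire proof of this lemma is a one-line appeal to its Theorem~\ref{thm:projection-convergence} (convergence of orthogonal projections onto moving subspaces of a Hilbert space), and your first two paragraphs --- the isometry $f\mapsto fq^{1/2}$ into $L_2(\mu)$, the reduction from $Q_{V_n}s_n$ to $Q_{V_n}s$ via $\|Q_{V_n}(s_n-s)\|\le\|s_n-s\|$, and the bound $\limsup_n\|s-Q_{V_n}s\|\le\|s-Q_Vs\|$ obtained by approximating $Q_Vs$ with elements of $V_n$ --- reconstruct that theorem's hypotheses and the first half of its proof almost verbatim.

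The step you flag as the obstacle is a genuine gap in your proposal, but it is equally a gap in the paper's argument, and your diagnosis of it is correct. Theorem~\ref{thm:projection-convergence} assumes only your ``one half'' (every $f\in L$ is a strong limit of $f_n\in L_n$) and concludes $\Pi(h\,|\,L_n)\to\Pi(h\,|\,L)$; its proof extracts a weak limit $g_{n_k}\rightharpoonup g^\star$, shows $h-g^\star\perp L$, and then asserts $g^\star=\Pi h$. That assertion requires $g^\star\in L$, which is exactly your no-overshoot condition and is nowhere established. Indeed the theorem is false as stated: take $H=\ell^2$, $L=\overline{\mathrm{span}}\{e_1\}$, $L_n=\overline{\mathrm{span}}\{e_1,e_2\}$ and $h_n=h=e_2$; hypothesis (ii) holds because $L\subset L_n$, yet $\Pi(h\,|\,L_n)=e_2$ for every $n$ while $\Pi(h\,|\,L)=0$. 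Consequently \eqref{eq:score-approx-parametric} and \eqref{eq:score-approx-nonparametric} alone do not imply Assumption~\ref{ass:effscr-approx}: one must add the second Mosco condition (weak limits of elements $f_{n_k}\in V_{n_k}$ lie in $V$), or verify in each application that the reweighted sieve nuisance directions $a^{\tr}\dot v_{m_n}p_{m_n}^{1/2}$ do not asymptotically acquire components outside the closure of $\{Db\,p_0^{1/2}:b\in B\}$. So your proposal is incomplete where you ``grant'' the no-overshoot step, but the missing hypothesis is missing from the paper's Theorem~\ref{thm:projection-convergence} as well; what remains is to state it explicitly and check it in the examples, not merely to assume it.
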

\begin{proof} Under the assumption of the lemma, for large enough $n$, $\tilde{\ell}_{m_n}$ exists as soon as $\dot{\ell}_{m_n}$ and $\dot{v}_{m_n}$ do. Given \eqref{eq:score-approx-parametric} and \eqref{eq:score-approx-nonparametric}, apply Theorem~\ref{thm:projection-convergence} in the appendix to obtain \eqref{eq:effscr-approx}.
\end{proof}

We now present two lemmas which can help with the verification of~\eqref{eq:effscr-approx}, or of~\eqref{eq:score-approx-parametric} and \eqref{eq:score-approx-nonparametric}. Their straightforward proofs are deferred to Appendix~\ref{app:tech}. 

\begin{lemma}\label{lemma:effscr-approx-based-on-Pmn-TV} Suppose Assumption \ref{ass:sieve-approx} holds. If $f_{m_n} = f_0 + o_{P_0}(1)$ and $f_{m_n}^2$ is uniformly $P_{m_n}$-integrable then $\limn \int \|f_{m_n}p_{m_n}^{1/2} - f_0p_0^{1/2}\|^2\,\dd\mu = 0$.
\end{lemma}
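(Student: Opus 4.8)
The plan is a truncation argument carried out in $L_2(\mu)$. I would first record the only consequence of Assumption~\ref{ass:sieve-approx} that is needed, namely $p_{m_n}^{1/2}\to p_0^{1/2}$ in $L_2(\mu)$: dividing the DQM-type identity~\eqref{eq:sieve-DQM} by $\sqrt n$ shows $p_{m_n}^{1/2}-p_0^{1/2}$ is $O(n^{-1/2})$ in $L_2(\mu)$, so $\int(p_{m_n}^{1/2}-p_0^{1/2})^2\,\dd\mu\to 0$. The remaining hypotheses say that $f_{m_n}-f_0\to 0$ in $P_0$-probability and that $\|f_{m_n}\|^2$ is uniformly $P_{m_n}$-integrable.

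For $K>0$ let $\psi_K$ be the coordinatewise clip at level $K$ (so $\psi_K(x)_j=\max(-K,\min(x_j,K))$); it is $1$-Lipschitz, satisfies $\|x-\psi_K(x)\|^2\le\|x\|^2\mathbf{1}\{\|x\|>K\}$, and is bounded, say $\sup_x\|\psi_K(x)\|=c_K<\infty$. By the triangle inequality in $L_2(\mu)$,
\[
\big\|f_{m_n}p_{m_n}^{1/2}-f_0p_0^{1/2}\big\|_{L_2(\mu)}\le\big\|(f_{m_n}-\psi_K(f_{m_n}))p_{m_n}^{1/2}\big\|_{L_2(\mu)}+\big\|\psi_K(f_{m_n})p_{m_n}^{1/2}-\psi_K(f_0)p_0^{1/2}\big\|_{L_2(\mu)}+\big\|(f_0-\psi_K(f_0))p_0^{1/2}\big\|_{L_2(\mu)}.
\]
The first term on the right equals $(\E_{m_n}\|f_{m_n}-\psi_K(f_{m_n})\|^2)^{1/2}\le(\E_{m_n}[\|f_{m_n}\|^2\mathbf{1}\{\|f_{m_n}\|>K\}])^{1/2}$, which by the uniform $P_{m_n}$-integrability of $\|f_{m_n}\|^2$ is at most some $\delta(K)$ with $\delta(K)\to 0$ as $K\to\infty$, uniformly in $n$; the third term equals $(\E_0[\|f_0-\psi_K(f_0)\|^2])^{1/2}\le(\E_0[\|f_0\|^2\mathbf{1}\{\|f_0\|>K\}])^{1/2}$, which $\to 0$ as $K\to\infty$ by dominated convergence since $\|f_0\|^2$ is $P_0$-integrable.

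For the middle term I would fix $K$ and split once more, bounding it by $\|\psi_K(f_{m_n})(p_{m_n}^{1/2}-p_0^{1/2})\|_{L_2(\mu)}+\|(\psi_K(f_{m_n})-\psi_K(f_0))p_0^{1/2}\|_{L_2(\mu)}$. The first piece is $\le c_K\|p_{m_n}^{1/2}-p_0^{1/2}\|_{L_2(\mu)}\to 0$ as $n\to\infty$ by the first step. The second equals $(\E_0\|\psi_K(f_{m_n})-\psi_K(f_0)\|^2)^{1/2}$, and since $\psi_K$ is $1$-Lipschitz we have $\|\psi_K(f_{m_n})-\psi_K(f_0)\|\le\|f_{m_n}-f_0\|\to 0$ in $P_0$-probability, while the integrand is bounded by $(2c_K)^2$, so the bounded convergence theorem gives this $\to 0$ as $n\to\infty$. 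Hence for each fixed $K$ the middle term vanishes as $n\to\infty$. Combining: given $\eps>0$, choose $K$ so large that the first and third terms are below $\eps$ for every $n$, then let $n\to\infty$; this yields $\limsup_n\|f_{m_n}p_{m_n}^{1/2}-f_0p_0^{1/2}\|_{L_2(\mu)}\le 2\eps$, and $\eps>0$ was arbitrary.

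I expect the only genuinely delicate point to be the reason for truncating with the Lipschitz clip $\psi_K$ rather than a sharp indicator: uniform integrability of $\|f_{m_n}\|^2$ is available only under $P_{m_n}$ and does not transfer to $P_0$ for free (the sets $\{\|f_{m_n}\|>K\}$ where $p_{m_n}$ has little mass need not be sets where $p_0$ has little mass, notwithstanding $P_{m_n}\to P_0$). Routing through $\psi_K$ keeps the sole appeal to uniform integrability attached to $P_{m_n}$, where it is assumed, while the continuity and boundedness of $\psi_K$ are exactly what let convergence in $P_0$-probability be used in the middle term; everything else is bookkeeping with the triangle inequality.
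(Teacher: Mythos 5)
Your argument is correct in outline but takes a genuinely different route from the paper's. The paper disposes of this lemma in two lines: Assumption~\ref{ass:sieve-approx} gives \eqref{eq:sieve-DQM}, hence $P_{m_n}\to P_0$ in total variation, and the conclusion is then delegated to Lemma~\ref{lem:L2-conv-varying-measures}, whose proof expands the square $\int(f_np_n^{1/2}-f_0p_0^{1/2})^2\,\dd\mu=\int f_n^2\,\dd P_n+\int f_0^2\,\dd P_0-2\int f_nf_0\,p_n^{1/2}p_0^{1/2}\,\dd\mu$, handles the first term with a Vitali convergence theorem for varying measures, and handles the cross term by introducing the normalised Hellinger-affinity measure $Q_n$ with density proportional to $p_n^{1/2}p_0^{1/2}$ together with generalised Fatou lemmas. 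Your truncation-plus-triangle-inequality argument replaces that machinery with elementary estimates: the Lipschitz clip $\psi_K$ confines the sole appeal to uniform integrability to the measure $P_{m_n}$ under which it is assumed, and bounded convergence under $P_0$ handles the clipped middle term. This is more self-contained (no citations to convergence theorems under varying measures) at the cost of being somewhat longer; the paper's route has the advantage that Lemma~\ref{lem:L2-conv-varying-measures} is stated once and reused elsewhere (e.g.\ in the partially linear model application). Your opening observation that Assumption~\ref{ass:sieve-approx} yields $\|p_{m_n}^{1/2}-p_0^{1/2}\|_{L_2(\mu)}=O(n^{-1/2})$ via \eqref{eq:sieve-DQM} is correct and slightly stronger than the total-variation convergence the paper extracts.

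One step needs patching: in bounding the third term you invoke dominated convergence ``since $\|f_0\|^2$ is $P_0$-integrable,'' but $f_0\in L_2(P_0)$ is not a hypothesis of the lemma and must be derived --- the uniform square integrability is assumed only under $P_{m_n}$, and this is precisely the point the paper's proof of Lemma~\ref{lem:L2-conv-varying-measures} addresses first, via a Fatou lemma for varying measures. Your own intermediate results supply the fix: for fixed $K$ your middle-term argument shows $\psi_K(f_{m_n})p_{m_n}^{1/2}\to\psi_K(f_0)p_0^{1/2}$ in $L_2(\mu)$, whence $\E_0\,\|\psi_K(f_0)\|^2=\lim_n\E_{m_n}\|\psi_K(f_{m_n})\|^2\le\sup_n\E_{m_n}\|f_{m_n}\|^2<\infty$, the last bound being a consequence of uniform integrability; letting $K\to\infty$ and applying monotone convergence gives $\E_0\,\|f_0\|^2<\infty$. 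With that line inserted the proof is complete.
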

That $f_{m_n}^2$ is uniformly $P_{m_n}$-integrable means that $\lim_{K\to\infty}\sup_n \int_{|f_{m_n}| \geq K} f_{m_n}^2\,\dd P_{m_n} = 0$. An alternative approach which circumvents the requirement to establish uniform square integrability directly can be based on a lemma originally due to \cite{riesz1928convergence}
(cf.~\citealp[Proposition~2.29, p.~22]{vandervaart1998asymptotic}). This lemma also connects Assumption~\ref{ass:effscr-approx} with the observation that in certain models the sequence of parametric efficient information matrices has the semiparametric efficient information matrix as a limit, as discussed in Section~\ref{sec:setup}. 
\begin{lemma}\label{lemma:Fisher_approx_implies_effscr_approx} Suppose that {\rm(i)} $p_{m_n}\to p_0$ in $\mu$-measure or {\rm(ii)} for any measurable set $A$, $P_0(A)\le \liminf_{n\to\infty} P_{m_n}(A)$. If $f_{m_n}\to f_0$ in $\mu$-measure and  $\limsup_{n\to\infty} \int f_{m_n}^2\,\dd P_{m_n} \le \int f_0^2\,\dd P_0<\infty$, then $\limn \int \|f_{m_n}p_{m_n}^{1/2} - f_0p_0^{1/2}\|^2\,\dd\mu = 0$.
\end{lemma}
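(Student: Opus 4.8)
The plan is to deduce the claim from the convergence lemma of \citet{riesz1928convergence} invoked just before the statement (cf.\ \citealp[Prop.~2.29, p.~22]{vandervaart1998asymptotic}), in the form: \emph{if $g_n\to g$ in $\mu$-measure and $\int\|g_n\|^2\,\dd\mu\to\int\|g\|^2\,\dd\mu<\infty$, then $\int\|g_n-g\|^2\,\dd\mu\to0$}. Put $g_n\define f_{m_n}\,p_{m_n}^{1/2}$ and $g\define f_0\,p_0^{1/2}$; then the target quantity is exactly $\int\|g_n-g\|^2\,\dd\mu$, while $\int\|g_n\|^2\,\dd\mu=\int\|f_{m_n}\|^2\,\dd P_{m_n}$ and $\int\|g\|^2\,\dd\mu=\int\|f_0\|^2\,\dd P_0<\infty$ by hypothesis. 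So it suffices to check (a) $g_n\to g$ in $\mu$-measure, and (b) $\int\|f_{m_n}\|^2\,\dd P_{m_n}\to\int\|f_0\|^2\,\dd P_0$.

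For (a) under hypothesis~(i): the elementary bound $|\sqrt a-\sqrt b|\le\sqrt{|a-b|}$ promotes $p_{m_n}\to p_0$ in $\mu$-measure to $p_{m_n}^{1/2}\to p_0^{1/2}$ in $\mu$-measure, and since a product of two sequences converging in $\mu$-measure again converges in $\mu$-measure, $g_n\to g$ in $\mu$-measure; the product claim is handled by passing from an arbitrary subsequence to a further one along which both $f_{m_n}\to f_0$ and $p_{m_n}^{1/2}\to p_0^{1/2}$ hold $\mu$-a.e., which is legitimate even though $\mu$ is merely $\sigma$-finite. For (b): from~(a), $\|g_n\|^2\to\|g\|^2$ in $\mu$-measure, so Fatou's lemma gives $\int\|f_0\|^2\,\dd P_0=\int\|g\|^2\,\dd\mu\le\liminf_n\int\|g_n\|^2\,\dd\mu=\liminf_n\int\|f_{m_n}\|^2\,\dd P_{m_n}$, and this combines with the standing hypothesis $\limsup_n\int\|f_{m_n}\|^2\,\dd P_{m_n}\le\int\|f_0\|^2\,\dd P_0$ to yield~(b). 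With~(a) and~(b) in hand Riesz's lemma closes the argument; concretely one may run it through the nonnegative functions $h_n\define 2\|g_n\|^2+2\|g\|^2-\|g_n-g\|^2\ge0$, whose $\mu$-measure limit is $4\|g\|^2$, so that Fatou together with~(b) forces $\limsup_n\int\|g_n-g\|^2\,\dd\mu\le0$ — no appeal to weak convergence being needed. Under hypothesis~(ii) the same template applies once one transfers the convergence statements from the probability measures $P_{m_n}$ to the dominating measure $\mu$: the setwise bound $P_0(A)\le\liminf_nP_{m_n}(A)$ makes $\{p_{m_n}\}$ uniformly $\mu$-integrable by the Vitali--Hahn--Saks theorem, and it directly supplies the lower-semicontinuity $\int\phi\,\dd P_0\le\liminf_n\int\phi\,\dd P_{m_n}$ ($\phi\ge0$) that replaces the Fatou step in~(b).

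The one genuinely substantive point is step~(a): the convergence in $\mu$-measure of the product $f_{m_n}p_{m_n}^{1/2}$. Under~(i) this is essentially immediate, the only care being the passage to $\mu$-a.e.\ convergent subsequences in the $\sigma$-finite setting; under~(ii) the additional obstacle is that the hypotheses are phrased for $P_{m_n}$ rather than for $\mu$, and bridging that gap is precisely the job of the uniform-integrability (Vitali--Hahn--Saks) argument. Everything else — the two applications of Fatou and the final invocation of Riesz's lemma — is routine bookkeeping.
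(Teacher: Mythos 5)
Your case (i) is correct and is essentially the paper's own proof: the paper likewise observes that $f_{m_n}p_{m_n}^{1/2}\to f_0p_0^{1/2}$ in $\mu$-measure and then invokes Proposition 2.29 of \citet{vandervaart1998asymptotic}; your separate Fatou step (b) is already built into that proposition, which needs only the $\limsup$ bound. The one point requiring the care you flag is that, for an infinite $\sigma$-finite $\mu$, a product of two sequences converging (globally) in $\mu$-measure need not converge globally in $\mu$-measure; the clean way to run the argument is the one you sketch, namely to show that every subsequence of $a_n\define\int\|f_{m_n}p_{m_n}^{1/2}-f_0p_0^{1/2}\|^2\dmu$ admits a further subsequence along which both factors converge $\mu$-a.e., apply the a.e.\ version of Riesz's lemma there, and conclude $a_n\to0$.

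Case (ii) contains a genuine gap: you never actually produce the convergence $f_{m_n}p_{m_n}^{1/2}\to f_0p_0^{1/2}$ in $\mu$-measure from hypothesis (ii), and no uniform-integrability or Vitali--Hahn--Saks argument can produce it. Applying the setwise bound to $A$ and $A^c$ shows that (ii) is just setwise convergence $P_{m_n}(A)\to P_0(A)$, and setwise convergence carries no information about convergence of the densities themselves. Concretely, take $\mu$ Lebesgue on $[0,1]$, $p_0\equiv 1$, $p_{m_n}(x)=1+\sin(2\pi nx)$ and $f_{m_n}=f_0\equiv 1$: by the Riemann--Lebesgue lemma $P_{m_n}(A)\to P_0(A)$ for every Borel $A$, so (ii) holds, $f_{m_n}\to f_0$ trivially, and $\int f_{m_n}^2\,\dd P_{m_n}=1=\int f_0^2\,\dd P_0$; yet by periodicity and strict Jensen
\begin{equation*}
\int \big(p_{m_n}^{1/2}-p_0^{1/2}\big)^2\dmu \;=\; 2-2\int_0^1\sqrt{1+\sin(2\pi u)}\,\dd u \;>\;0
\end{equation*}
for every $n$. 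The densities here are bounded by $2$, hence uniformly integrable, so the Vitali--Hahn--Saks bridge you invoke is vacuous in the direction you need; it yields the lower-semicontinuity for step (b) but nothing for step (a). The paper disposes of case (ii) by citing Proposition S3.1 of \cite{hoesch24locally}; you should check its hypotheses, which must be strictly stronger than what you (and the lemma as transcribed here) are using -- most plausibly it assumes the $\mu$-measure convergence of $f_{m_n}p_{m_n}^{1/2}$ (or of $p_{m_n}$) outright and uses the setwise bound only in the Fatou step. As written, your route from (ii) to step (a) fails.
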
 
If we use either of these lemmas to verify~\eqref{eq:effscr-approx} directly, we are required to first find the efficient score under the semiparametric model $\mc{P}$, a task which involves performing sometimes complicated projections. Lemma~\ref{lem:score-approx-implies-effscr-approx}, on the other hand, does not involve any projections. The following corollary permits us to find the efficient semiparametric score $\tilde{\ell}$ as a limit. 
\begin{corollary}\label{corollary:to_lemma3.1} 
Suppose that~\eqref{eq:score-approx-parametric} and~\eqref{eq:score-approx-nonparametric} hold. If there is a vector $f_0$ of functions such that the conditions of either Lemma~\ref{lemma:effscr-approx-based-on-Pmn-TV} or~\ref{lemma:Fisher_approx_implies_effscr_approx} hold for $f_{m_n}$ with components $P_{m_n}$-a.s.~equal to each of the components of $\tilde{\ell}_{m_n}$, then $f_0 = \tilde{\ell}$ $P_0$-a.s..
\end{corollary}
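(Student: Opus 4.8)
The plan is to combine Lemma~\ref{lem:score-approx-implies-effscr-approx} with whichever of Lemmas~\ref{lemma:effscr-approx-based-on-Pmn-TV} and~\ref{lemma:Fisher_approx_implies_effscr_approx} is invoked in the hypothesis, and then to finish with a triangle inequality in $L_2(\mu)$. First, since \eqref{eq:score-approx-parametric} and \eqref{eq:score-approx-nonparametric} are assumed, Lemma~\ref{lem:score-approx-implies-effscr-approx} gives that Assumption~\ref{ass:effscr-approx} holds, i.e.
\begin{equation}
\limn \int \|\tilde{\ell}_{m_n} p_{m_n}^{1/2} - \tilde{\ell}\, p_0^{1/2}\|^2 \,\dd\mu = 0.
\notag
\end{equation}
Second, by the hypothesis of the corollary, the conditions of Lemma~\ref{lemma:effscr-approx-based-on-Pmn-TV} or of Lemma~\ref{lemma:Fisher_approx_implies_effscr_approx} hold for the vector $f_{m_n}$, so the conclusion of that lemma yields $\limn \int \|f_{m_n} p_{m_n}^{1/2} - f_0\, p_0^{1/2}\|^2\,\dd\mu = 0$.

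Third, I would note the bookkeeping step that links the two displays: since each component of $f_{m_n}$ equals the corresponding component of $\tilde{\ell}_{m_n}$ $P_{m_n}$-almost surely, the set $\{f_{m_n}\neq \tilde{\ell}_{m_n}\}$ has $P_{m_n}$-measure zero, hence $p_{m_n}=0$ $\mu$-a.e.\ on that set, hence $p_{m_n}^{1/2}=0$ there $\mu$-a.e.; consequently $f_{m_n}p_{m_n}^{1/2} = \tilde{\ell}_{m_n}p_{m_n}^{1/2}$ holds $\mu$-a.e. Therefore $\limn \int \|\tilde{\ell}_{m_n} p_{m_n}^{1/2} - f_0\, p_0^{1/2}\|^2\,\dd\mu = 0$ as well. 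Combining this with the first display via the triangle inequality in $L_2(\mu)$,
\begin{equation}
\Big(\int \|(f_0 - \tilde{\ell})\, p_0^{1/2}\|^2\,\dd\mu\Big)^{1/2} \leq \Big(\int \|f_0 p_0^{1/2} - \tilde{\ell}_{m_n}p_{m_n}^{1/2}\|^2\,\dd\mu\Big)^{1/2} + \Big(\int \|\tilde{\ell}_{m_n}p_{m_n}^{1/2} - \tilde{\ell} p_0^{1/2}\|^2\,\dd\mu\Big)^{1/2},
\notag
\end{equation}
and letting $n\to\infty$ on the right-hand side gives $\int \|(f_0 - \tilde{\ell})\, p_0^{1/2}\|^2\,\dd\mu = 0$. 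Hence $f_0 = \tilde{\ell}$ $\mu$-a.e.\ on $\{p_0 > 0\}$, which is exactly $f_0 = \tilde{\ell}$ $P_0$-a.s.

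This argument is essentially routine once Lemma~\ref{lem:score-approx-implies-effscr-approx} is in hand; the only point requiring a little care — and the step I would highlight explicitly — is the third one, namely the passage from $P_{m_n}$-a.s.\ equality of $f_{m_n}$ and $\tilde{\ell}_{m_n}$ to the $\mu$-a.e.\ equality of the weighted roots $f_{m_n}p_{m_n}^{1/2}$ and $\tilde{\ell}_{m_n}p_{m_n}^{1/2}$, since $L_2(\mu)$-convergence of the two sequences $f_{m_n}p_{m_n}^{1/2}$ and $\tilde{\ell}_{m_n}p_{m_n}^{1/2}$ is what the triangle inequality needs. There is no analytic obstacle beyond this.
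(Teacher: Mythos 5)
Your proposal is correct and follows essentially the same route as the paper's own proof: apply Lemma~\ref{lem:score-approx-implies-effscr-approx} to get Assumption~\ref{ass:effscr-approx}, apply the hypothesised lemma to $f_{m_n}$, and conclude by uniqueness of $L_2(\mu)$ limits. The only difference is that you spell out the bookkeeping step that $P_{m_n}$-a.s.\ equality of $f_{m_n}$ and $\tilde{\ell}_{m_n}$ gives $\mu$-a.e.\ equality of $f_{m_n}p_{m_n}^{1/2}$ and $\tilde{\ell}_{m_n}p_{m_n}^{1/2}$, which the paper leaves implicit.
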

\begin{proof} Since Lemma~\ref{lem:score-approx-implies-effscr-approx} holds, we have that $\int \norm{\tilde{\ell}_m p_m^{1/2} - \tilde{\ell}p_0^{1/2}}^2\,\dd\mu \to 0$, which is Assumption~\ref{ass:effscr-approx}. By either Lemma~\ref{lemma:effscr-approx-based-on-Pmn-TV} or~\ref{lemma:Fisher_approx_implies_effscr_approx} we have $\int \norm{\tilde{\ell}_m p_m^{1/2} - f_0p_0^{1/2}}^2\,\dd\mu \to 0$.
Since $L_2$ limits are unique up to sets of measure zero, $f_0p_{0}^{1/2} = \tilde{\ell}p_{0}^{1/2}$ $\mu$-almost surely, hence $f_0 = \tilde{\ell}$ $P_0$-almost surely.
\end{proof}

\section{Asymptotic efficiency}\label{sec:asymp-eff} We now present the main result of the paper. In order to approximate the semiparametric model, we let $m$ increase with $n$. This means that, for our purposes, the property corresponding to the asymptotic linearity in the parametric efficient score function exhibited in~\eqref{eq:parametric-asymp-linear}, is 
\begin{equation}
\sqrt{n}(\widehat\theta_{m_n,n} - \theta_0) = \frac{1}{\sqrt{n}}\sum_{i=1}^n J_{m_n}^{-1}\tilde{\ell}_{m_n}(X_i)+o_{P_{m_n}^n}(1).
\label{eq:sieve-asymp-linear}
\end{equation}
Note that property \eqref{eq:sieve-asymp-linear} is \emph{not} implied by \eqref{eq:parametric-asymp-linear}, as \eqref{eq:parametric-asymp-linear} only requires that the remainder $\sqrt{n}(\widehat\theta_{m,n} - \theta_0) -n^{-1/2}\sum_{i=1}^n J_{m}^{-1}\tilde{\ell}_{m}(X_i)$ is $o_{P_{m}^n}(1)$ as $n \to \infty$, for fixed $m$. Verification of~\eqref{eq:sieve-asymp-linear} depends on the definition of the estimator $\widehat\theta_{m_n,n}$. In Section~\ref{sec:sieve-pl} we use a profile likelihood technique to provide sufficient conditions for \eqref{eq:sieve-asymp-linear} to hold for a sequence of maximum likelihood estimators $\widehat{\theta}_{m_n,n}$ in growing contiguous parametric submodels.

Combined with Assumptions \ref{ass:DQM}--\ref{ass:effscr-approx}, the linearity of $\sqrt{n}(\widehat\theta_{m_n,n} - \theta_0)$ in the influence function displayed in~\eqref{eq:sieve-asymp-linear} implies the asymptotic efficiency of the estimator $\widehat\theta_{m_n, n}$. The essential idea for proving this is outlined in the following heuristic argument: If Assumption~\ref{ass:effscr-approx} holds, then $J_{m_n}^{-1}\tilde{\ell}_{m_n}$ in~\eqref{eq:sieve-asymp-linear} may be replaced by $J^{-1}\tilde{\ell}$, thus~\eqref{eq:sieve-asymp-linear} becomes $\sqrt{n}(\widehat\theta_{m_n,n} - \theta_0) = n^{-1/2}\sum_{i=1}^n J^{-1}\tilde{\ell}(X_i)+o_{P_{m_n}^n}(1)$. 
Combined with Assumption~\ref{ass:sieve-approx}, this asymptotic linearity ensures that $\sqrt{n}(\widehat\theta_{m_n,n} - \theta_0)$ converges jointly with $ \log (\dd P_0^n /\dd P_{m_n}^n)$ under $P_{m_n}^n$, and hence, by Le Cam's third lemma, one can change the measure from $P_{m_n}$ back to $P_0$ at the cost of adding a bias term of $J^{-1}\E_0\,\tilde{\ell}(X)g(X)$ to the limiting distribution of $\sqrt{n}(\widehat\theta_{m_n,n} - \theta_0)$ under $P_0$. But by the condition on $g$ in Assumption \ref{ass:sieve-approx}, this bias term is zero.
\begin{theorem}\label{thm:asymp-linear-efficiency} If Assumptions~\ref{ass:DQM}~{\&}~\ref{ass:nonsingular-effinfo} hold, and $(m_n)$ is a subsequence such that Assumptions~\ref{ass:sieve-approx}~{\&}~\ref{ass:effscr-approx} hold, and~\eqref{eq:sieve-asymp-linear} is satisfied, then $\widehat{\theta}_{m_n, n}$ is best regular in $\mc{P}$.
\end{theorem}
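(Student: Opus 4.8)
The plan is to keep the ``parametric'' statistic $T_n\coloneqq n^{-1/2}\sum_{i=1}^n J_{m_n}^{-1}\tilde{\ell}_{m_n}(X_i)$ as the central object --- by~\eqref{eq:sieve-asymp-linear} we have $\sqrt{n}(\widehat{\theta}_{m_n,n}-\theta_0)=T_n+o_{P_{m_n}^n}(1)$ --- and to obtain the efficient limit under $P_0$ and regularity under the perturbations of Assumption~\ref{ass:DQM} by two applications of Le Cam's third lemma, both \emph{based at} $P_{m_n}^n$ and transporting $T_n$ to the measure of interest. Every nontrivial quantity will be evaluated as an $L_2(\mu)$ inner product of two sequences whose convergence is among the hypotheses, so that no ``consistently weighted'' replacement of $\tilde{\ell}_{m_n}$ by $\tilde{\ell}$ is required. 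As preliminaries: Assumption~\ref{ass:sieve-approx} and Le Cam's first lemma give the mutual contiguity $P_0^n\contig\triangleright\,P_{m_n}^n$ (so $o_{P_{m_n}^n}(1)$ statements transfer to $P_0^n$ and back), and $P_{m_n}\to P_0$ in total variation via~\eqref{eq:sieve-DQM}; Assumption~\ref{ass:effscr-approx} forces $J_{m_n}=\E_{m_n}\tilde{\ell}_{m_n}\tilde{\ell}_{m_n}^{\tr}\to\E_0\tilde{\ell}\tilde{\ell}^{\tr}=J$ (Gram matrices of $L_2(\mu)$-convergent vectors), whence, by Assumption~\ref{ass:nonsingular-effinfo}, $J_{m_n}^{-1}\to J^{-1}$ and $J_{m_n}^{-1}\tilde{\ell}_{m_n}p_{m_n}^{1/2}\to J^{-1}\tilde{\ell}p_0^{1/2}$ in $L_2(\mu)$; finally $\E_{m_n}\tilde{\ell}_{m_n}=0$.

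First I would show that $(T_n,\log(\dd P_0^n/\dd P_{m_n}^n))$ converges weakly, under $P_{m_n}^n$, to a bivariate Gaussian $(S,W)$ with $S\sim\normal(0,J^{-1})$ and $W\sim\normal(-\tfrac12\E_0 g^2,\E_0 g^2)$: the $T_n$-marginal is a triangular-array central limit theorem (the Lindeberg condition following from the $L_2(\mu)$-convergence of $J_{m_n}^{-1}\tilde{\ell}_{m_n}p_{m_n}^{1/2}$, which makes the $P_{m_n}$-weighted squares uniformly integrable), while the $W$-marginal comes from~\eqref{rem:sieve-DQM-LAN-equivalence:eq:LAN} and the contiguity. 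The key computation is that the limiting covariance vanishes: a quadratic-mean expansion of $\log(p_0/p_{m_n})$ together with $\E_{m_n}\tilde{\ell}_{m_n}=0$ reduces it to $-2J^{-1}\langle\tilde{\ell}_{m_n}p_{m_n}^{1/2},\,\sqrt{n}(p_{m_n}^{1/2}-p_0^{1/2})\rangle_{L_2(\mu)}\to -J^{-1}\langle\tilde{\ell}p_0^{1/2},\,gp_0^{1/2}\rangle_{L_2(\mu)}=-J^{-1}\E_0\,g\tilde{\ell}=0$, the last equality being precisely the orthogonality imposed in Assumption~\ref{ass:sieve-approx}. Le Cam's third lemma then gives $T_n\weakconv\normal(0,J^{-1})$ under $P_0^n$, and hence $\sqrt{n}(\widehat{\theta}_{m_n,n}-\theta_0)\weakconv\normal(0,J^{-1})$ under $P_0^n$, with no bias incurred by the change of measure --- this is where the condition on $g$ pays off.

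For regularity, fix $h=(\tau,b)\in\real^p\times B$ and write $Ah=\tau^{\tr}\dot{\ell}+Db$. Assumption~\ref{ass:DQM} gives $\sqrt{n}(p_{\theta_0+\tau/\sqrt{n},\eta_n(b)}^{1/2}-p_0^{1/2})\to\tfrac12\,Ah\,p_0^{1/2}$ in $L_2(\mu)$, which combined with~\eqref{eq:sieve-DQM} yields $\sqrt{n}(p_{\theta_0+\tau/\sqrt{n},\eta_n(b)}^{1/2}-p_{m_n}^{1/2})\to\tfrac12(Ah-g)p_0^{1/2}$ in $L_2(\mu)$; transitivity of contiguity gives $P_{\theta_0+\tau/\sqrt{n},\eta_n(b)}^n\contig\triangleright\,P_{m_n}^n$. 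Composing the log-likelihood ratios (equivalently, local asymptotic normality along this path) shows $\log(\dd P_{\theta_0+\tau/\sqrt{n},\eta_n(b)}^n/\dd P_{m_n}^n)\weakconv\normal(-\tfrac12\E_0(Ah-g)^2,\E_0(Ah-g)^2)$ under $P_{m_n}^n$, and jointly with $T_n$ the limiting covariance is, by the same kind of expansion as above, $2J^{-1}\langle\tilde{\ell}p_0^{1/2},\,\tfrac12(Ah-g)p_0^{1/2}\rangle_{L_2(\mu)}=J^{-1}\E_0\big[\tilde{\ell}(Ah-g)\big]=J^{-1}\big(\E_0\tilde{\ell}\dot{\ell}^{\tr}\tau+\E_0\tilde{\ell}Db-\E_0\tilde{\ell}g\big)=J^{-1}(J\tau+0-0)=\tau$, where I used that $\tilde{\ell}$ is orthogonal to the closure of $\{Db:b\in B\}$ (so $\E_0\tilde{\ell}Db=0$ and $\E_0\tilde{\ell}\dot{\ell}^{\tr}=\E_0\tilde{\ell}\tilde{\ell}^{\tr}=J$) together with the orthogonality of $g$. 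Le Cam's third lemma now gives $T_n\weakconv\normal(\tau,J^{-1})$ under $P_{\theta_0+\tau/\sqrt{n},\eta_n(b)}^n$, hence $\sqrt{n}(\widehat{\theta}_{m_n,n}-\theta_0-\tau/\sqrt{n})\weakconv\normal(0,J^{-1})$ there: $\widehat{\theta}_{m_n,n}$ is regular with limit law $\normal(0,J^{-1})$, which by the H\'ajek--Le Cam convolution theorem is best regular in $\mc{P}$.

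I expect the main obstacle to be the joint weak-convergence step under $P_{m_n}^n$ --- that $(T_n,\log(\dd P_0^n/\dd P_{m_n}^n))$, and analogously the pair with the perturbed-model log-likelihood ratio, converges to the claimed bivariate Gaussian. Because $\tilde{\ell}_{m_n}$ changes with $n$, this is not a plain i.i.d.\ central limit theorem but a triangular-array statement, and it must be established \emph{jointly} with the log-likelihood ratio, which is exactly the content of a Le Cam's-third-lemma-type argument. One would handle it by a bivariate Lindeberg--Feller argument: the covariance matrices converge by the $L_2(\mu)$ inner-product computations displayed above, the Lindeberg condition for the $\tilde{\ell}_{m_n}$-coordinate follows because $L_2(\mu)$-norm convergence of $J_{m_n}^{-1}\tilde{\ell}_{m_n}p_{m_n}^{1/2}$ entails $L_1(\mu)$-convergence --- hence uniform $P_{m_n}$-integrability --- of $\|J_{m_n}^{-1}\tilde{\ell}_{m_n}\|^2 p_{m_n}$, and the log-likelihood ratio's contribution is pinned down by its quadratic-mean expansion together with the contiguity. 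Isolating this as a self-contained lemma --- roughly, that a statistic asymptotically linear in $J_{m_n}^{-1}\tilde{\ell}_{m_n}$ under $P_{m_n}^n$, together with the quadratic-mean convergences~\eqref{eq:sieve-DQM},~\eqref{eq:effscr-approx} and Assumption~\ref{ass:DQM}, converges jointly with any contiguous log-likelihood ratio with the covariance dictated by the Hellinger geometry at $P_0$ --- is where the real work lies; the two applications of Le Cam's third lemma and the algebraic identities for the covariances are then routine.
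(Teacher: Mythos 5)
Your proposal is correct in its architecture, but it takes a genuinely different route from the paper. The paper never verifies regularity by hand: it first uses Proposition~A.10 of \citet{vandervaart1988large} (whose hypotheses are exactly Assumptions~\ref{ass:sieve-approx} and~\ref{ass:effscr-approx}) to conclude $n^{-1/2}\sum_{i=1}^n(\tilde{\ell}_{m_n}(X_i)-\tilde{\ell}(X_i))=o_{P_0^n}(1)$, transfers the $o_{P_{m_n}^n}(1)$ remainder of~\eqref{eq:sieve-asymp-linear} to $o_{P_0^n}(1)$ by contiguity, shows $J_{m_n}\to J$ by the same reverse-triangle/Cauchy--Schwarz argument you use, and thereby arrives at the \emph{semiparametric} asymptotic linearity~\eqref{eq:semiparam-asymp-linear}; best regularity then follows by citing Lemmas~25.23 and~25.25 of \citet{vandervaart1998asymptotic}. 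You instead keep the triangular-array statistic $T_n$ throughout, never replacing $\tilde{\ell}_{m_n}$ by $\tilde{\ell}$, and apply Le Cam's third lemma twice from the base $P_{m_n}^n$ --- once to reach $P_0^n$ and once to reach each local perturbation $P^n_{\theta_0+\tau/\sqrt{n},\eta_n(b)}$ --- with all limiting covariances computed as $L_2(\mu)$ inner products; your covariance identities ($-J^{-1}\E_0\,g\tilde{\ell}=0$ for the first application and $J^{-1}\E_0\,\tilde{\ell}(Ah-g)=\tau$ for the second, using $\E_0\tilde{\ell}\dot{\ell}^{\tr}=J$, $\E_0\tilde{\ell}Db=0$ and the orthogonality in Assumption~\ref{ass:sieve-approx}) are all correct, as is the composition argument giving contiguity and the LAN expansion of the perturbed path relative to the moving base $P_{m_n}^n$. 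What your route buys is transparency: one sees explicitly where the orthogonality $\E_0\,g\tilde{\ell}=0$ kills the bias and where $\tau$ emerges as the regularity shift, without invoking the ``asymptotically linear in the efficient influence function implies best regular'' package. What it costs is precisely the joint triangular-array convergence under the moving measures $P_{m_n}^n$ that you flag as the remaining work (Lindeberg for the $\tilde{\ell}_{m_n}$-coordinate, control of $\sqrt{n}\,\E_{m_n}g$ and $\mathrm{Var}_{m_n}(g)$, and the replacement of $\log(p_0/p_{m_n})$ by its Hellinger linearisation); be aware that this deferred lemma is, almost verbatim, the content of the very Proposition~A.10 the paper imports, so you could close your argument immediately by citing it rather than re-proving it.
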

\begin{proof} Assumptions~\ref{ass:sieve-approx} and \ref{ass:effscr-approx} along with the i.i.d.~assumption on the data, verify the conditions of Proposition A.10 in~\citet[p.~185]{vandervaart1988large}. Applied to our setting, this proposition gives that $n^{-1/2}\sumin (\tilde{\ell}_{m_n}(X_i) - \tilde{\ell}(X_i))  
 = \sqrt{n}(\E_{m_n}\tilde{\ell}_{m_n}(X)
   - \E_0\,\tilde{\ell}(X)\,) - \E_0\, \tilde{\ell}(X) g(X) + o_{P_0^n}(1) = o_{P_0^n}(1)$.
The first equality follows from the cited proposition. The second equality ensues because $\E_{m_n}\tilde{\ell}_{m_n}(X) = 0$, $\E_0\,\tilde{\ell}(X) = 0$, and $\E_0\, \tilde{\ell}(X) g(X) = 0$ by Assumption~\ref{ass:sieve-approx}. Since Assumption~\ref{ass:sieve-approx} implies that $P_{m_n}^n$ and $P_0^n$ are mutually contiguous, we can swap the $o_{P_{m_n}^n}(1)$ in~\eqref{eq:sieve-asymp-linear} with $o_{P_0^n}(1)$, so that~\eqref{eq:sieve-asymp-linear} reads $\sqrt{n}(\widehat\theta_{m_n,n} - \theta_0) = n^{-1/2}\sum_{i=1}^n J_{m_n}^{-1}\tilde{\ell}_{m_n}(X_i)+o_{P_{0}^n}(1)$. Moreover, for any $a \in \real^p$ the reverse triangle inequality and then Cauchy--Schwarz yield
\begin{align*}
|(a^{\tr}J_{m_n}a)^{1/2} - (a^{\tr}Ja)^{1/2}|^2 
& = |\norm{a^{\tr}\tilde{\ell}_{m_n}p_{m_n}^{1/2} }_{\mu} - \norm{a^{\tr}\tilde{\ell}p_0^{1/2} }_{\mu}        |^2\\
& \leq \norm{a^{\tr}(\tilde{\ell}_{m_n}p_{m_n}^{1/2} -\tilde{\ell}p_0^{1/2} )     }_{\mu}^2
\leq \norm{a}^2 \int \|\tilde{\ell}_{m_n} p_{m_n}^{1/2} -  \tilde{\ell}p_0^{1/2}\|^2 \,\dd\mu,
\end{align*}
where the right hand side tends to zero by Assumption~\ref{ass:effscr-approx}. By continuity of the square function, this entails that $a^{\tr}J_{m_n}a \to a^{\tr}Ja$, and since the above is true for any $a \in \real^p$, $J_{m_n} \to J$
and therefore $J_{m_n}^{-1} \to J^{-1}$ since the inverse is a continuous operation when $J$ is nonsingular. Combining this with $n^{-1/2}\sum_{i=1}^n (\tilde{\ell}_{m_n}(X_i) - \tilde{\ell}(X_i)) = o_{P_0^n}(1)$ and~\eqref{eq:sieve-asymp-linear}, we conclude that
  \begin{equation}
    \sqrt{n}(\widehat{\theta}_{m_n,n} - \theta_0) = \frac{1}{\sqrt{n}}\sum_{i=1}^n J^{-1}\tilde{\ell}(X_i) + o_{P_0^n}(1).
    \label{eq:semiparam-asymp-linear}
 \end{equation}
Given Assumptions \ref{ass:DQM} and \ref{ass:nonsingular-effinfo}, the result then follows from Lemma~25.23 and Lemma~25.25 in \citet[pp.~367--369]{vandervaart1998asymptotic}.
\end{proof}

\section{Contiguous sieve MLE}\label{sec:sieve-pl}
In addition to Assumptions~\ref{ass:DQM}--\ref{ass:effscr-approx}, Theorem~\ref{thm:asymp-linear-efficiency} requires that the linear expansion \eqref{eq:sieve-asymp-linear} holds. In this section we provide two sets of conditions under which this linear expansion is satisfied for MLEs in growing contiguous parametric models. Both sets of conditions are growing parametric versions of a profile likelihood theorem due to \cite{murphy2000profile}. These authors provide conditions under which the semiparametric profile likelihood admits a quadratic expansion which, in turn, implies a condition like \eqref{eq:semiparam-asymp-linear}. In this section we argue similarly, but replace the {\it semiparametric} profile likelihood with a {\it contiguous sieve} profile likelihood which permits us to conclude that \eqref{eq:sieve-asymp-linear} holds. There is a key technical advantage of working with contiguous sieve profile likelihoods in place of the semiparametric profile likelihood. The latter requires the careful construction of {`}approximately least favourable submodels{'}, which can be quite complicated to construct, as can be seen from the examples in the cited article. In contrast, as the likelihoods we work with are parametric, exact least favourable submodels can be constructed following a clear recipe as they always take the same form.

To introduce the profile likelihood, let $(\theta,\gamma) \mapsto L_m(\theta,\gamma)(x) = p_{\theta,T_m^{-1}\gamma}(x)$ be the likelihood function under $\mc{P}_m$, and $L_{m,n}(\theta,\gamma) = \prod_{i=1}^n L_m(\theta,\gamma)(X_i)$ be the likelihood based on an i.i.d.~sample $X_1,\ldots,X_n$. Denote by $\pl_{m,n}(\theta)$ the profile likelihood based on $\mc{P}_m$
\begin{equation}
    \pl_{m,n}(\theta)
    = \sup_{\gamma \in \Gamma_m}L_{m,n}(\theta,\gamma),
    \notag
\end{equation}
    For each $\theta$, let $\widehat{\gamma}(\theta)$ be the value achieving this supremum, that is $\pl_{m,n}(\theta) = L_{m,n}(\theta,\widehat{\gamma}(\theta))$. We assume throughout that for large enough $n$, such a value exists.

\subsection{Quadratic expansion \& log concavity}
A straightforward set of sufficient conditions for \eqref{eq:sieve-asymp-linear} (or~\eqref{eq:semiparam-asymp-linear}) can be obtained using the results of~\cite{HjortPollard93}. Specifically, let $A_{m,n}(h) = \log \pl_{m,n}(\theta_0 + h/\sqrt{n}) - \log\pl_{m,n}(\theta_0)$, then, if the functions $h \mapsto A_{m,n}(h)$ are concave and one manages to find a subsequence $(m_n)$ such that 
\begin{equation}
A_{m_n,n}(h)
=  \frac{h^{\tr}}{\sqrt{n}}\sum_{i=1}^n \tilde{\ell}_{m_n}(X_i) - \half h^{\tr}J_{m_n}h + o_{P_{m_n}^n}(1),
\label{eq:Anh_LAN}
\end{equation}
for each $h$, then the {`}Basic Corollary{'} in \cite{HjortPollard93} immediately delivers \eqref{eq:sieve-asymp-linear}. This setting covers a large class of semiparametric models of practical interest, including the examples we study in detail in Section \ref{sec:applications} below. 
We emphasise that the concavity requirement is local, being imposed only on $A_{m, n}$ (actually only the $A_{m_n, n}$ being concave suffices). 

For cases in which $A_{m, n}$ is concave and ~\eqref{eq:Anh_LAN} can be shown to hold, this provides a complete proof of asymptotic normality and efficiency of the contiguous sieve MLE without requiring any empirical process type arguments. We summarise this in a proposition. 
\begin{prop}\label{theorem:sieve_sandwich} 
Suppose that Assumptions~\ref{ass:DQM}~\&~\ref{ass:nonsingular-effinfo} hold and that $(m_n)$ is a subsequence such that Assumptions~\ref{ass:sieve-approx}~{\&}~\ref{ass:effscr-approx} and the quadratic expansion in~\eqref{eq:Anh_LAN} hold. If $h \mapsto A_{m_n,n}(h)$ is concave, then \eqref{eq:sieve-asymp-linear} holds, and $\widehat{\theta}_{m_n, n}$ is best regular in $\mc{P}$.
\end{prop}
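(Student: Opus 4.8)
The plan is to extract the asymptotic linear expansion~\eqref{eq:sieve-asymp-linear} from the concavity of $h\mapsto A_{m_n,n}(h)$ together with the quadratic expansion~\eqref{eq:Anh_LAN} by a standard argmax (convexity) argument, and then to read off best regularity from Theorem~\ref{thm:asymp-linear-efficiency}. First I would observe that, since $\widehat\theta_{m_n,n}$ maximises the likelihood $L_{m_n,n}$, it also maximises the profile likelihood $\pl_{m_n,n}$, so that $\widehat h_n\define\sqrt{n}(\widehat\theta_{m_n,n}-\theta_0)$ maximises $h\mapsto A_{m_n,n}(h)$; such a maximiser exists for all large $n$ by the standing assumption on $\widehat\gamma(\cdot)$ and the concavity hypothesis. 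Writing $U_{m_n,n}=\fracrootn\sumin\tilde{\ell}_{m_n}(X_i)$, equation~\eqref{eq:Anh_LAN} becomes, for each fixed $h$, $A_{m_n,n}(h)=h^{\tr}U_{m_n,n}-\half h^{\tr}Jh-\half h^{\tr}(J_{m_n}-J)h+o_{P_{m_n}^n}(1)$, where $J$ is the semiparametric efficient information matrix of Assumption~\ref{ass:nonsingular-effinfo}.

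Next I would collect two auxiliary facts, both essentially contained in the proof of Theorem~\ref{thm:asymp-linear-efficiency}. The first is that $J_{m_n}\to J$, hence $J_{m_n}^{-1}\to J^{-1}$: this is the computation already carried out there from Assumption~\ref{ass:effscr-approx}, and it shows that the term $h^{\tr}(J_{m_n}-J)h$ is $o(1)$ for each fixed $h$ and may be absorbed into the remainder. The second is that $U_{m_n,n}=O_{P_{m_n}^n}(1)$: under $P_{m_n}^n$ the summands $\tilde{\ell}_{m_n}(X_i)$ are i.i.d.\ with mean zero and covariance $J_{m_n}$, so $\E_{m_n}\|U_{m_n,n}\|^2=\int\|\tilde{\ell}_{m_n}\|^2 p_{m_n}\dmu$, and Assumption~\ref{ass:effscr-approx} forces this to converge to $\int\|\tilde{\ell}\|^2 p_0\dmu<\infty$; thus the second moments are bounded in $n$ and Markov's inequality gives tightness under $P_{m_n}^n$.

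With these in hand I would apply the convexity lemma underlying the ``Basic Corollary'' of \citet{HjortPollard93} to the concave functions $-A_{m_n,n}$, which converge pointwise in $P_{m_n}^n$-probability to the strictly convex quadratic $h\mapsto\half h^{\tr}Jh-h^{\tr}U_{m_n,n}$ (with $J$ nonsingular and $U_{m_n,n}$ tight); this yields that the minimisers satisfy $\widehat h_n=J^{-1}U_{m_n,n}+o_{P_{m_n}^n}(1)$. Since $J_{m_n}^{-1}\to J^{-1}$ and $U_{m_n,n}=O_{P_{m_n}^n}(1)$, the difference $(J^{-1}-J_{m_n}^{-1})U_{m_n,n}$ is $o_{P_{m_n}^n}(1)$, so $\widehat h_n=J_{m_n}^{-1}U_{m_n,n}+o_{P_{m_n}^n}(1)$, which is exactly~\eqref{eq:sieve-asymp-linear}. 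Finally, \eqref{eq:sieve-asymp-linear} together with Assumptions~\ref{ass:DQM}, \ref{ass:nonsingular-effinfo}, \ref{ass:sieve-approx} and~\ref{ass:effscr-approx} places us in the hypotheses of Theorem~\ref{thm:asymp-linear-efficiency}, which gives that $\widehat\theta_{m_n,n}$ is best regular in $\mc{P}$.

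The main obstacle is the triangular-array nature of the problem: both the limiting quadratic form $J_{m_n}$ and the centring statistic $U_{m_n,n}$ are built from the $n$-dependent efficient score $\tilde{\ell}_{m_n}$, whereas the classical Basic Corollary is stated for a fixed limiting quadratic and a stochastically bounded score. The resolution is precisely the two reductions above — folding $J_{m_n}-J$ into the remainder using Assumption~\ref{ass:effscr-approx}, and establishing the uniform second-moment bound on $U_{m_n,n}$ from the same assumption — after which only pointwise-in-$h$ control of the remainder is required, since convexity automatically upgrades pointwise convergence to local uniform convergence.
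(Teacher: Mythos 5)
Your proposal is correct and follows essentially the same route as the paper: use $J_{m_n}\to J$ (a consequence of Assumption~\ref{ass:effscr-approx}, as computed in the proof of Theorem~\ref{thm:asymp-linear-efficiency}) to replace the quadratic form in~\eqref{eq:Anh_LAN}, invoke the Basic Corollary of \citet{HjortPollard93} via concavity, and then conclude best regularity from Theorem~\ref{thm:asymp-linear-efficiency}. The only difference is that you make explicit two steps the paper leaves implicit --- the stochastic boundedness of $n^{-1/2}\sumin\tilde{\ell}_{m_n}(X_i)$ under $P_{m_n}^n$ and the interchange of $J^{-1}$ with $J_{m_n}^{-1}$ needed to match the exact form of~\eqref{eq:sieve-asymp-linear} --- both of which are correctly justified from Assumption~\ref{ass:effscr-approx}.
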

\begin{proof} This follows from the Basic Corollary in~\cite{HjortPollard93}. In particular, since $J_{m_n}\to J$ under Assumption~\ref{ass:effscr-approx} we have $A_{m_n,n}(h) = h^{\tr}n^{-1/2}\sum_{i=1}^n \tilde{\ell}_{m_n}(X_i) - \half h^{\tr}J h + o_{P_{m_n}^n}(1)$. As $A_{m,n}(h)$ is concave, $\sqrt{n}(\widehat{\theta}_{m_n,n} - \theta_0) = n^{-1/2} \sum_{i=1}^n J^{-1}\tilde{\ell}_{m_n}(X_i) + o_{P_{m_n}^n}(1)$ by the Basic Corollary.  $\widehat{\theta}_{m_n,n}$ is then best regular in $\mc{P}$ by Theorem~\ref{thm:asymp-linear-efficiency}.
\end{proof}
Compared to Theorem~\ref{thm:asymp-linear-efficiency}, the above proposition allows us to replace~\eqref{eq:sieve-asymp-linear} with~\eqref{eq:Anh_LAN}, provided $h \mapsto A_{m,n}(h)$ is concave. One key advantage of this is that \eqref{eq:Anh_LAN} and concavity give us {\it both} consistency and asymptotic normality of $\widehat{\theta}_{m_n,n}$, while to establish~\eqref{eq:sieve-asymp-linear} without concavity, one typically first needs to establish consistency.

We now present a theorem giving conditions under which~\eqref{eq:Anh_LAN} holds. This theorem is a stripped down and sieved version of Theorem~1 in \citet{murphy2000profile}, and requires that we introduce some quantities inspired by that paper. It is in the construction of these quantities we gain a lot in simplicity by working with growing parametric models, compared to attacking the semiparametric model directly. This is because under $P_m = P_{\theta_0,T_m^{-1}\gamma_0}$, the least favourable submodel for estimating $\theta_0$ always takes the form $\theta \mapsto P_{\theta,T_{m}^{-1}\gamma(\theta)}$, where $\gamma(\theta) = \gamma_0 + i_{m,11}^{-1}i_{m,10}(\theta_0 - \theta)$. In view of this, define $\gamma_t^{\sub}(\theta,\gamma) = \gamma + i_{m,11}^{-1}i_{m,10}(\theta - t)$,
where we suppress the dependence of $\gamma_t^{\sub}(\theta,\gamma)$ upon $m$ from the notation. For each $m$ and each $(\theta,\gamma) \in \Theta \times \Gamma_m$ define the mappings $t \mapsto l_m(t,\theta,\gamma) \coloneqq \log L_m(t,\eta_{\gamma_t^{\sub}(\theta,\gamma)})$.
These functions bridge the gap between the log-profile likelihood and the efficient score. To see this, notice that $\eta_{\gamma_{\theta}^{\sub}}(\theta,\gamma) = \gamma$, and that the derivative of $l_m(t,\theta,\gamma)$ with respect to $t$ is
\begin{equation}
\dot{l}_m(t,\theta,\gamma) = \dot{\ell}_{t,T_m^{-1}\gamma_t^{\sub}(\theta,\gamma)}
- i_{m,01} i_{m,11}^{-1}\dot{v}_{t,T_m^{-1}\gamma_t^{\sub}(\theta,\gamma)}
= \tilde{\ell}_{t,T_m^{-1}\gamma_t^{\sub}(\theta,\gamma) },
\notag
\end{equation}
in particular, $\dot{l}_m(\theta_0,\theta_0,\gamma_0) = \tilde{\ell}_{m}$. At the same time, mimicking the sandwiching technique of \citet{murphy2000profile}, we have, writing $\PP_n f = n^{-1}\sum_{i=1}^n f(X_i)$ for the integral with respect to the empirical measure, that for arbitrary $\tilde{\theta}_n$
\begin{equation}
\PP_n l_{m_n}(\tilde{\theta}_n,\theta_0,\widehat{\gamma}(\theta_0))
- \PP_n l_{m_n} (\theta_0, \theta_0,\widehat{\gamma}(\theta_0))\leq 
\frac{1}{n} (\log \pl_{m_n,n}(\tilde{\theta}_n) - \log \pl_{m_n,n}(\theta_0) ) ,
\label{eq:sandwich_lower}
\end{equation}
and 
\begin{equation}
\frac{1}{n} (\log \pl_{m_n,n}(\tilde{\theta}_n) - \log \pl_{m_n,n}(\theta_0) ) 
\leq \PP_n l_{m_n}(\tilde{\theta}_n,\tilde{\theta}_n,\widehat{\gamma}(\tilde{\theta}_n))
- \PP_n l_{m_n} (\theta_0, \tilde{\theta}_n,\widehat{\gamma}(\tilde{\theta}_n)).
\label{eq:sandwich_upper}
\end{equation}
In particular, the process $h \mapsto A_{m,n}(h)  = \log \pl_{m,n}(\theta_0 + h/\sqrt{n}) - \log \pl_{m,n}(\theta_0)$ can be squeezed between two quantities approximating the {`}efficient LAN expansion{'} of~\eqref{eq:Anh_LAN}.

\begin{theorem}\label{thm:get_Anh_LAN} Set $\tilde{\theta}_{n} = \theta_0 + h/\sqrt{n}$ for some fixed $h$. Assume that $t \mapsto l_m(t,\theta,\gamma)(x)$ is twice continuously differentiable for each $m$, $(\theta,\gamma)$ and $x$, with derivatives $\dot{l}_m$ and $\ddot{l}_m$, and that there is a subsequence $(m_n)$ such that for $\tilde{\psi}$ equal to either $(\tilde\theta_n, \widehat{\gamma}(\tilde{\theta}_n))$ or $(\theta_0, \widehat{\gamma}(\theta_0))$, 
\begin{equation}
\sqrt{n}\PP_n \dot{l}_{m_n}(\theta_0,\tilde{\psi}) = \sqrt{n}\PP_n \tilde{\ell}_{m_n} + o_{P_{m_n}^n}(1), \quad \text{and}\quad 
\PP_n \ddot{l}_{m_n}(s_n, \tilde{\psi}) = - J_{m_n}  + o_{P_{m_n}^n}(1),
\notag
\end{equation}
for any deterministic sequence $s_n \to \theta_0$. Then~\eqref{eq:Anh_LAN} holds.
\end{theorem}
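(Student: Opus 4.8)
The plan is to combine the sandwich inequalities \eqref{eq:sandwich_lower}--\eqref{eq:sandwich_upper} with second-order Taylor expansions of their two sides. Write $l_{m_n}(t,\tilde\psi)$ for $l_{m_n}(t,\theta,\gamma)$ with $\tilde\psi = (\theta,\gamma)$. The lower bound in \eqref{eq:sandwich_lower} and the upper bound in \eqref{eq:sandwich_upper} both have the form $\PP_n l_{m_n}(\tilde\theta_n,\tilde\psi) - \PP_n l_{m_n}(\theta_0,\tilde\psi)$, with $\tilde\psi = (\theta_0,\widehat\gamma(\theta_0))$ for the former and $\tilde\psi = (\tilde\theta_n,\widehat\gamma(\tilde\theta_n))$ for the latter; these are exactly the two choices of $\tilde\psi$ for which the hypotheses are imposed. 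Since \eqref{eq:sandwich_lower}--\eqref{eq:sandwich_upper} sandwich $n^{-1}A_{m_n,n}(h)$ between these two expressions, it suffices to show that, for each of these two (fixed-in-$t$) choices of $\tilde\psi$,
\[
n\,\PP_n l_{m_n}(\tilde\theta_n,\tilde\psi) - n\,\PP_n l_{m_n}(\theta_0,\tilde\psi) = \frac{h^{\tr}}{\sqrt n}\sumin \tilde\ell_{m_n}(X_i) - \half h^{\tr}J_{m_n}h + o_{P_{m_n}^n}(1),
\]
for then $A_{m_n,n}(h)$, trapped between two quantities having this expansion, also satisfies \eqref{eq:Anh_LAN}.

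Fix such a $\tilde\psi$. By Taylor's theorem with the integral form of the remainder, applied to the $C^2$ function $t\mapsto\PP_n l_{m_n}(t,\tilde\psi)$ along the segment from $\theta_0$ to $\tilde\theta_n = \theta_0 + h/\sqrt n$,
\[
n\,\PP_n l_{m_n}(\tilde\theta_n,\tilde\psi) - n\,\PP_n l_{m_n}(\theta_0,\tilde\psi) = \sqrt n\,h^{\tr}\PP_n\dot l_{m_n}(\theta_0,\tilde\psi) + \int_0^1 (1-u)\,h^{\tr}\PP_n\ddot l_{m_n}(\theta_0 + uh/\sqrt n,\tilde\psi)\,h\,\dd u.
\]
The leading term equals $\tfrac{h^{\tr}}{\sqrt n}\sumin\tilde\ell_{m_n}(X_i) + o_{P_{m_n}^n}(1)$ by the first displayed hypothesis. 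For the integral, subtract and add $\half h^{\tr}J_{m_n}h = \int_0^1(1-u)\,h^{\tr}J_{m_n}h\,\dd u$ to rewrite it as $-\half h^{\tr}J_{m_n}h + \int_0^1(1-u)\,h^{\tr}\{\PP_n\ddot l_{m_n}(\theta_0 + uh/\sqrt n,\tilde\psi) + J_{m_n}\}h\,\dd u$. For each fixed $u\in[0,1]$, $\theta_0 + uh/\sqrt n$ is a deterministic sequence tending to $\theta_0$, so by the second displayed hypothesis the integrand tends to $0$ in $P_{m_n}^n$-probability for every $u$; combined with the uniform bound $\sup_{u\in[0,1]}\norm{\PP_n\ddot l_{m_n}(\theta_0 + uh/\sqrt n,\tilde\psi) + J_{m_n}} = O_{P_{m_n}^n}(1)$, a dominated-convergence argument gives that the integral is $o_{P_{m_n}^n}(1)$. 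Hence the displayed identity reduces to $\tfrac{h^{\tr}}{\sqrt n}\sumin\tilde\ell_{m_n}(X_i) - \half h^{\tr}J_{m_n}h + o_{P_{m_n}^n}(1)$, as required, and the sandwich then yields \eqref{eq:Anh_LAN}.

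The step I expect to be the main obstacle is the control of the second-order remainder: the hypotheses give the limit of $\PP_n\ddot l_{m_n}(\cdot,\tilde\psi)$ only along deterministic sequences approaching $\theta_0$, whereas the Taylor remainder involves this average Hessian over the whole shrinking segment $[\theta_0,\tilde\theta_n]$ (or, in Lagrange form, at a data-dependent point on it). Writing the remainder in integral form, as above, reduces the difficulty to the uniform bound $\sup_{t\in[\theta_0,\tilde\theta_n]}\norm{\PP_n\ddot l_{m_n}(t,\tilde\psi) + J_{m_n}} = O_{P_{m_n}^n}(1)$; this in turn follows from the Hessian hypothesis at $t=\theta_0$ once one bounds the oscillation $\sup_{t\in[\theta_0,\tilde\theta_n]}\norm{\PP_n\ddot l_{m_n}(t,\tilde\psi) - \PP_n\ddot l_{m_n}(\theta_0,\tilde\psi)}$, which is controlled by the continuity of $t\mapsto\ddot l_{m_n}(t,\tilde\psi)(x)$ as the segment shrinks, and which is immediate in the explicit models of Section~\ref{sec:applications}. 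The remaining ingredients — the two sandwich inequalities, which are already established, and the two Taylor expansions — are routine.
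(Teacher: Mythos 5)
Your proof is correct and follows essentially the same route as the paper's: sandwich $A_{m_n,n}(h)$ between the two differences $\PP_n l_{m_n}(\tilde\theta_n,\tilde\psi)-\PP_n l_{m_n}(\theta_0,\tilde\psi)$ via \eqref{eq:sandwich_lower}--\eqref{eq:sandwich_upper}, Taylor-expand each in $t$ with $\tilde\psi$ held fixed, and substitute the two displayed hypotheses. The one place you diverge is the second-order remainder, and it is exactly the point you flag: the paper uses the Lagrange form, writing $\half h^{\tr}\PP_n\ddot l_{m_n}(s_n,\tilde\psi)h$ for an $s_n$ between $\theta_0$ and $\tilde\theta_n$ and then invoking the Hessian hypothesis directly, whereas you use the integral form of the remainder together with a dominated-convergence argument. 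Your version is the more scrupulous of the two, since the mean-value point $s_n$ is data-dependent (it depends on the sample through $\PP_n$ and on $\widehat\gamma(\tilde\theta_n)$), so the hypothesis as literally stated---for \emph{deterministic} sequences $s_n\to\theta_0$---does not directly apply to it, while your integral form only ever evaluates the Hessian at the deterministic points $\theta_0+uh/\sqrt n$. The price is the extra uniform bound $\sup_{u\in[0,1]}\norm{\PP_n\ddot l_{m_n}(\theta_0+uh/\sqrt n,\tilde\psi)+J_{m_n}}=O_{P_{m_n}^n}(1)$, which is not among the stated hypotheses and does require the oscillation control you describe; under the paper's evidently intended reading (the Hessian convergence holding for any sequence taking values in the shrinking deterministic interval, as it does in the applications), the two arguments coincide. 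Either way the argument is sound, and the remaining ingredients are as in the paper.
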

\begin{proof} A Taylor expansion keeping $\tilde{\psi}$ fixed: $n\PP_n l_{m_n}(\theta_0 + h/\sqrt{n}, \tilde{\psi} ) - n\PP_n l_{m_n}(\theta_0, \tilde{\psi} )
= 
h^\tr \sqrt{n}\,\PP_n \dot{l}_{m_n}(\theta_0,\tilde{\psi}) 
+ \half h^\tr\, \PP_n \ddot{l}_{m_n}( s_n, \tilde{\psi}) h$ for an $s_n$ between $\tilde{\theta}_n$ and $\theta_0$. Replace $\PP_n \ddot{l}_{m_n}(s_n, \tilde{\psi})$ by $-J_{m_n} + o_{P_{m_n}^n}(1)$, and $\sqrt{n}\,\PP_n \dot{l}_{m_n}(\theta_0,\tilde{\psi})$ by $\sqrt{n}\PP_n \tilde{\ell}_{m_n} + o_{P_{m_n}^n}(1)$ in both of the sandwiching bounds in~\eqref{eq:sandwich_lower} and~\eqref{eq:sandwich_upper}.
\end{proof}

\subsection{Random quadratic expansions}\label{ssec:sieve-pl:random-quadratic}
The concavity assumption that we worked under in the previous section can be substituted with a consistency assumption. Suppose that for any random sequence $\tilde{\theta}_n$ such that $\tilde{\theta}_n = \theta_0 + o_{P_{m_n}^n}(1)$ for some subsequence $(m_n)$, with $\tilde{h}_n = \tilde{\theta}_n - \theta_0$, we have 
\begin{equation}
    \log \pl_{m_n, n}(\tilde\theta_n) - \log \pl_{m_n,n}(\theta_0) = \tilde{h}_n^\tr \sum_{i=1}^n \tilde{\ell}_{m_n}(X_i) - \half n\tilde{h}_n^\tr J_{m_n} \tilde{h}_n + r_{n}(\tilde\theta_n),
\label{eq:pl-expansion}    
\end{equation}
where $r_{n}(\tilde{\theta}_n) = o_{P_{m_n}^n}(\sqrt{n}\norm{\tilde{h}_n} + n\norm{\tilde{h}_n}^2 + 1)$. This gives a proposition that is similar to Proposition~\ref{theorem:sieve_sandwich}, replacing concavity with consistency. 

\begin{prop}\label{prop:random-pl-expansion-MLE-asymp-linear} Suppose Assumptions \ref{ass:DQM}--\ref{ass:effscr-approx} hold, and let $\widehat{\theta}_{m,n}$ be the maximiser of $\pl_{m,n}(\theta)$. If $\widehat{\theta}_{m_n,n} = \theta_0 + o_{P_{m_n}^n}(1)$ and~\eqref{eq:pl-expansion} holds, then \eqref{eq:sieve-asymp-linear} holds, and consequently $\widehat{\theta}_{m_n,n}$ is best regular in $\mc{P}$. 
\end{prop}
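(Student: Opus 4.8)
The plan is to run the standard ``argmax of an approximately quadratic criterion'' argument, in the spirit of \citet{murphy2000profile} (or of the parametric $M$-estimator heuristics), exploiting that~\eqref{eq:pl-expansion} furnishes a quadratic expansion of the log-profile likelihood valid along \emph{any} $P_{m_n}^n$-consistent sequence, and that the quadratic form $J_{m_n}$ stabilises at the nonsingular limit $J$.

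First I would record two facts that come for free from the ingredients of Theorem~\ref{thm:asymp-linear-efficiency}. Under Assumption~\ref{ass:effscr-approx}, exactly as in that proof, $J_{m_n}\to J$, so $J_{m_n}$ is invertible for all large $n$ with smallest eigenvalue bounded below by some $c>0$; and $\fracrootn\sumin\tilde{\ell}_{m_n}(X_i) = \fracrootn\sumin\tilde{\ell}(X_i) + o_{P_{m_n}^n}(1)$ via Proposition~A.10 of \citet{vandervaart1988large} together with the mutual contiguity of $P_{m_n}^n$ and $P_0^n$ from Assumption~\ref{ass:sieve-approx}. Since $\fracrootn\sumin\tilde{\ell}(X_i)$ is $O_{P_0^n}(1)$ by the central limit theorem, contiguity makes $S_n\define\sumin\tilde{\ell}_{m_n}(X_i)$ satisfy $S_n/\sqrt n = O_{P_{m_n}^n}(1)$. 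Set $\bar{h}_n\define J_{m_n}^{-1}S_n/\sqrt n = O_{P_{m_n}^n}(1)$ and $\bar{\theta}_n\define\theta_0+\bar h_n/\sqrt n$, so $\bar\theta_n = \theta_0+o_{P_{m_n}^n}(1)$ and $\bar h_n$ is the exact maximiser of $h\mapsto h^{\tr}S_n/\sqrt n-\half h^{\tr}J_{m_n}h$.

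Since $\widehat{\theta}_{m_n,n}$ maximises $\pl_{m_n,n}$, with $\widehat h_n\define\sqrt n(\widehat\theta_{m_n,n}-\theta_0)$ we have $\log\pl_{m_n,n}(\widehat\theta_{m_n,n})\ge\log\pl_{m_n,n}(\bar\theta_n)$. Apply~\eqref{eq:pl-expansion} on both sides -- legitimate because $\widehat\theta_{m_n,n}$ and $\bar\theta_n$ are both $\theta_0+o_{P_{m_n}^n}(1)$, the latter with a genuinely $o_{P_{m_n}^n}(1)$ remainder since $\bar h_n = O_{P_{m_n}^n}(1)$ -- and complete the square in $J_{m_n}$ using $S_n/\sqrt n = J_{m_n}\bar h_n$ and the symmetry of $J_{m_n}$; after cancellation the inequality reads
\[
\half(\widehat h_n-\bar h_n)^{\tr}J_{m_n}(\widehat h_n-\bar h_n)\ \le\ o_{P_{m_n}^n}\!\big(\norm{\widehat h_n}^2+\norm{\widehat h_n}+1\big).
\]
Bounding $\norm{\widehat h_n}\le\norm{\widehat h_n-\bar h_n}+\norm{\bar h_n}$ with $\norm{\bar h_n}=O_{P_{m_n}^n}(1)$, the left side is at least $c\norm{\widehat h_n-\bar h_n}^2$ for large $n$ while the right side is $o_{P_{m_n}^n}(\norm{\widehat h_n-\bar h_n}^2)+o_{P_{m_n}^n}(\norm{\widehat h_n-\bar h_n})+o_{P_{m_n}^n}(1)$; absorbing the quadratic error into the left side on events of probability tending to one and applying the arithmetic--geometric mean inequality to the linear error yields $\norm{\widehat h_n-\bar h_n}^2 = o_{P_{m_n}^n}(1)$. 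Hence $\sqrt n(\widehat\theta_{m_n,n}-\theta_0) = J_{m_n}^{-1}\fracrootn\sumin\tilde{\ell}_{m_n}(X_i)+o_{P_{m_n}^n}(1)$, which is exactly~\eqref{eq:sieve-asymp-linear}; best regularity of $\widehat\theta_{m_n,n}$ in $\mc{P}$ then follows from Theorem~\ref{thm:asymp-linear-efficiency}.

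The main obstacle is the self-referential nature of the remainder in~\eqref{eq:pl-expansion}: because $r_n(\tilde\theta_n)=o_{P_{m_n}^n}(\sqrt n\norm{\tilde h_n}+n\norm{\tilde h_n}^2+1)$ scales with the a~priori unknown magnitude of $\tilde h_n$, one cannot simply substitute $\widehat\theta_{m_n,n}$ and declare the remainder negligible. The argument must be arranged so that the strict positive-definiteness of $\lim_n J_{m_n}=J$ lets the quadratic term dominate this adaptive error, which is precisely the step that simultaneously upgrades the assumed consistency of $\widehat\theta_{m_n,n}$ to the $\sqrt n$-rate and delivers the linear expansion. Everything else -- the convergence $J_{m_n}\to J$, the replacement of $\tilde{\ell}_{m_n}$ by $\tilde{\ell}$, and shuttling $o_P$/$O_P$ statements between $P_{m_n}^n$ and $P_0^n$ -- is routine given Assumptions~\ref{ass:sieve-approx}--\ref{ass:effscr-approx} and the machinery already used for Theorem~\ref{thm:asymp-linear-efficiency}.
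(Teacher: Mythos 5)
Your proposal is correct and follows essentially the same route as the paper's proof: compare the profile log-likelihood at $\widehat{\theta}_{m_n,n}$ and at $\theta_0 + n^{-1/2}J_{m_n}^{-1}\Delta_n$ (your $\bar\theta_n$), complete the square in $J_{m_n}$, use the eigenvalue lower bound from $J_{m_n}\to J$ to dominate the adaptive remainder, and bootstrap $\sqrt{n}$-consistency from the resulting inequality. The only cosmetic difference is that the paper obtains $J_{m_n}^{-1}\Delta_n = O_{P_{m_n}^n}(1)$ directly from $\E_{m_n}\Delta_n\Delta_n^{\tr} = J_{m_n}\to J$ rather than routing through Proposition A.10 and contiguity, and it isolates $\|\widehat h_n\| = O_{P_{m_n}^n}(1)$ as an explicit intermediate step before substituting back, whereas you absorb the error terms in one pass; both variants are sound.
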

The following theorem, which is analogous to Theorem~\ref{thm:get_Anh_LAN}, provides conditions under which the expansion given in~\eqref{eq:pl-expansion} holds. 

\begin{theorem}\label{thm:sieve-pl} Assume that $t \mapsto l_m(t,\theta,\gamma)(x)$ is twice continuously differentiable for each $m$, $(\theta,\gamma)$ and $x$, with derivatives $\dot{l}_m$ and $\ddot{l}_m$; and that there is a subsequence $(m_n)$ such that for any random sequence $\tilde{\theta}_n$ with $\tilde{\theta}_n = \theta_0 + o_{P_{m_n}^n}(1)$, we have 
\begin{equation}\label{thm:sieve-pl:eq:no-bias}
\sqrt{n}\PP_n \dot{l}_{m_n}(\theta_0,\tilde{\theta}_n,\widehat{\gamma}(\tilde{\theta}_n)) = \sqrt{n}\PP_n \tilde{\ell}_{m_n} + o_{P_{m_n}^n}(\sqrt{n}\norm{\tilde\theta_n - \theta_0} + 1), 
\end{equation}
and 
\begin{equation}\label{thm:sieve-pl:eq:info-eq}
\PP_n \ddot{l}_{m_n}(s_n,\tilde{\theta}_n,\widehat{\gamma}(\tilde{\theta}_n)) = - J_{m_n}  + o_{P_{m_n}^n}(1),
\end{equation}
for any random sequence $s_n = \theta_0 + o_{P_{m_n}^n}(1)$. Then~\eqref{eq:pl-expansion} holds.
\end{theorem}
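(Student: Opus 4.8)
The plan is to squeeze $\log\pl_{m_n,n}(\tilde\theta_n) - \log\pl_{m_n,n}(\theta_0)$ between the two bounds in~\eqref{eq:sandwich_lower}--\eqref{eq:sandwich_upper} and to show that \emph{both} bounds, after a second-order Taylor expansion in their first argument, coincide with the right-hand side of~\eqref{eq:pl-expansion} up to a remainder of the admissible order $o_{P_{m_n}^n}(\sqrt n\norm{\tilde h_n} + n\norm{\tilde h_n}^2 + 1)$. Fix a random sequence $\tilde\theta_n = \theta_0 + o_{P_{m_n}^n}(1)$, put $\tilde h_n = \tilde\theta_n - \theta_0$, and note that each bound in~\eqref{eq:sandwich_lower}--\eqref{eq:sandwich_upper} has the form $n\PP_n l_{m_n}(\tilde\theta_n,\tilde\psi) - n\PP_n l_{m_n}(\theta_0,\tilde\psi)$, with the $(\theta,\gamma)$-pair $\tilde\psi = (\theta_0,\widehat\gamma(\theta_0))$ for the lower bound and $\tilde\psi = (\tilde\theta_n,\widehat\gamma(\tilde\theta_n))$ for the upper bound. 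For fixed $\tilde\psi$ the map $t \mapsto \PP_n l_{m_n}(t,\tilde\psi)$ is a finite average of the $C^2$ maps $t\mapsto l_{m_n}(t,\tilde\psi)(X_i)$, hence itself $C^2$ with gradient $\PP_n\dot l_{m_n}(\cdot,\tilde\psi)$ and Hessian $\PP_n\ddot l_{m_n}(\cdot,\tilde\psi)$; Taylor expanding along the segment from $\theta_0$ to $\tilde\theta_n$ gives
\[
n\PP_n l_{m_n}(\tilde\theta_n,\tilde\psi) - n\PP_n l_{m_n}(\theta_0,\tilde\psi)
= \sqrt n\,\tilde h_n^\tr \big(\sqrt n\,\PP_n\dot l_{m_n}(\theta_0,\tilde\psi)\big)
+ \half n\,\tilde h_n^\tr \big(\PP_n\ddot l_{m_n}(s_n,\tilde\psi)\big)\tilde h_n ,
\]
for a data-dependent $s_n = \theta_0 + c_n\tilde h_n$ with $c_n \in [0,1]$, so that $s_n = \theta_0 + o_{P_{m_n}^n}(1)$.

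Next I would insert the hypotheses. For the lower bound, apply the no-bias condition~\eqref{thm:sieve-pl:eq:no-bias} with the constant sequence $\theta_0$ (itself a permissible random sequence tending to $\theta_0$) to obtain $\sqrt n\,\PP_n\dot l_{m_n}(\theta_0,\theta_0,\widehat\gamma(\theta_0)) = \sqrt n\,\PP_n\tilde\ell_{m_n} + o_{P_{m_n}^n}(1)$, and apply~\eqref{thm:sieve-pl:eq:info-eq} (again with the $\theta$-slot held at $\theta_0$, and with the random sequence $s_n$) to obtain $\PP_n\ddot l_{m_n}(s_n,\theta_0,\widehat\gamma(\theta_0)) = -J_{m_n} + o_{P_{m_n}^n}(1)$. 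For the upper bound, apply~\eqref{thm:sieve-pl:eq:no-bias} and~\eqref{thm:sieve-pl:eq:info-eq} directly with the given $\tilde\theta_n$ (and with $s_n$). Substituting, in either case the linear term reduces to $\tilde h_n^\tr\sum_{i=1}^n\tilde\ell_{m_n}(X_i)$ plus an error of order $o_{P_{m_n}^n}(n\norm{\tilde h_n}^2 + \sqrt n\norm{\tilde h_n})$, and the quadratic term to $-\half n\tilde h_n^\tr J_{m_n}\tilde h_n$ plus an error of order $o_{P_{m_n}^n}(n\norm{\tilde h_n}^2)$. It is worth emphasising that~\eqref{thm:sieve-pl:eq:no-bias} does the essential work: because $l_{m_n}$ is expanded around $\widehat\gamma(\cdot)$ rather than the true $\gamma_0$, the gradient $\PP_n\dot l_{m_n}(\theta_0,\tilde\psi)$ is the efficient score evaluated at a perturbed parameter (recall $\dot l_m(\theta_0,\theta_0,\gamma) = \tilde\ell_{\theta_0,T_m^{-1}\gamma}$), and~\eqref{thm:sieve-pl:eq:no-bias} is precisely the statement that this equals $\PP_n\tilde\ell_{m_n}$ up to a negligible bias.

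Finally I would collect the error terms. The remainders furnished by~\eqref{thm:sieve-pl:eq:no-bias}--\eqref{thm:sieve-pl:eq:info-eq} get multiplied by the a priori unbounded random factors $\sqrt n\norm{\tilde h_n}$ and $n\norm{\tilde h_n}^2$; since $\sqrt n\norm{\tilde h_n}$, $n\norm{\tilde h_n}^2$, and $\sqrt n\norm{\tilde h_n}(\sqrt n\norm{\tilde h_n}+1)$ are each at most $2(\sqrt n\norm{\tilde h_n} + n\norm{\tilde h_n}^2 + 1)$, every such product is $o_{P_{m_n}^n}(\sqrt n\norm{\tilde h_n} + n\norm{\tilde h_n}^2 + 1)$, i.e.~of the order permitted for $r_n$. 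Thus both the lower and the upper bound of~\eqref{eq:sandwich_lower}--\eqref{eq:sandwich_upper} equal $\tilde h_n^\tr\sum_{i=1}^n\tilde\ell_{m_n}(X_i) - \half n\tilde h_n^\tr J_{m_n}\tilde h_n + r_n(\tilde\theta_n)$ with $r_n(\tilde\theta_n)$ of the stated order, and since $\log\pl_{m_n,n}(\tilde\theta_n) - \log\pl_{m_n,n}(\theta_0)$ lies between them, it satisfies the same expansion; this is~\eqref{eq:pl-expansion}. The main obstacle is really just this $o_{P_{m_n}^n}$-bookkeeping, together with keeping straight which argument of $l_{m_n}$ is expanded and which is held fixed in each of the two sandwich bounds; the genuinely delicate part — the construction of exact least favourable submodels and of the bridging functions $l_m$ that makes the clean Taylor argument available — has already been carried out preceding the statement, which is the payoff of working with parametric rather than semiparametric likelihoods.
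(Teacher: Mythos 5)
Your proposal is correct and follows essentially the same route as the paper's proof: a second-order Taylor expansion of $t \mapsto \PP_n l_{m_n}(t,\tilde{\psi})$ around $\theta_0$ for the two choices $\tilde{\psi} = (\theta_0,\widehat{\gamma}(\theta_0))$ and $\tilde{\psi} = (\tilde{\theta}_n,\widehat{\gamma}(\tilde{\theta}_n))$, substitution of the hypotheses~\eqref{thm:sieve-pl:eq:no-bias} and~\eqref{thm:sieve-pl:eq:info-eq}, and the sandwich~\eqref{eq:sandwich_lower}--\eqref{eq:sandwich_upper}. Your remainder bookkeeping matches the paper's $o_{P_{m_n}^n}((\sqrt{n}\norm{\tilde{h}_n}+1)^2)$ bound, which is of the order permitted for $r_n$ in~\eqref{eq:pl-expansion}.
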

Proposition \ref{prop:random-pl-expansion-MLE-asymp-linear} and Theorem \ref{thm:sieve-pl} are growing parametric versions of Corollary 1 and Theorem 1 (respectively) in \cite{murphy2000profile}. As they are proven by making what are essentially notational adjustments to the proofs of the cited results, we defer their proofs to Appendix~\ref{sec:pl-conditions}. In Appendix~\ref{sec:pl-conditions} we also provide sufficient conditions for the assumptions made in Theorem~\ref{thm:sieve-pl}.

\section{Applications}\label{sec:applications}
In this section we continue the example of the partially linear model, and also apply our theory to the Cox regression model. Both models satisfy the background assumptions on the semiparametric model (Assumption~\ref{ass:DQM} and \ref{ass:nonsingular-effinfo}), consequently we concentrate on the assumptions made on the parametric approximations, that is Assumptions~\ref{ass:sieve-approx} and \ref{ass:effscr-approx}, along with~\eqref{eq:sieve-asymp-linear}. For both models we take, for simplicity, $\theta \in \Theta \subset \real$, and $\eta$ a real valued $s$-times continuously differentiable function on the unit interval. The parametric approximations employed take the form $\beta_m(z)^{\tr}\gamma = (T_m^{-1}\gamma)(z)$, where for each $m$, $\beta_m = (\beta_{m,1},\ldots,\beta_{m,k_m})^{\tr}$ is a collection of orthonormal functions, and $\gamma = (\gamma_1,\ldots,\gamma_{k_m})^{\tr} \in 
\Gamma_m \subset \real^{k_m}$ are coefficients such that $\beta_m^{\tr}\gamma \to \eta$ in $L_2([0,1],\nu)$, where $\nu$ is an appropriate finite measure (which is always possible, see e.g. Theorem 14.3.1 in \cite{szego1975orthogonal}). To not overburden the notation, the vectors $\gamma$ are not indexed by $m$. Some of the details of the two following examples are left to appendices~\ref{appendix:PLM} and \ref{appendix:cox}. 

We note that the partially linear model considered immediately below is a special case of the class of {`}partially linear GLMs{'} studied by~\cite{mammen1997penalized}, amongst others. It should be uncomplicated to extend our results in Section~\ref{sec:plm} below to a large subclass of such partially linear GLMs. In particular, it is straightforward to establish the concavity of $A_{m, n}$ for any such model with a canonical link function.

\subsection{The partially linear model}\label{sec:plm}
We have $n$ i.i.d.~replicates of $X = (W,Y,Z)$, for an outcome $Y$ and covariates $(W,Z)$ with values in $\real \times [0,1]$. The observation $X$ stems from the model $P_0 = P_{\theta_0,\eta_0}$, where each member $P_{\theta,\eta} \in \mc{P}$ has density
\begin{equation}
p_{\theta,\eta}(x) = \frac{1}{\sqrt{2\pi}\sigma}\exp(-\half (y - \eta(z) - \theta w)^2/\sigma^2 )f_{W,Z}(w,z),
\label{eq:plm_density}
\end{equation}
with respect to Lebesgue measure, and $f_{W,Z}(w,z)$ is the joint density of $(W,Z)$. We assume that $\E\,|W|^{2+\delta}<\infty$ for some $\delta>0$, and denote $P_Z$ the (marginal) law of $Z$. The parametric densities in $\mc{P}_m$ are of the same form as~\eqref{eq:plm_density}, with $\eta$ replaced by $\beta_m^{\tr}\gamma$. The score functions under $\mc{P}_m$ are then 
\begin{equation}
\dot{\ell}_{\theta,T_m^{-1}\gamma}(x) = \frac{w}{\sigma^2}(y - \beta_m(z)^{\tr}\gamma - 
\theta w),\; \text{and}\quad 
\dot{v}_{\theta,T_m^{-1}\gamma}(x) = \frac{\beta_m(z)}{\sigma^2}(y - \beta_m(z)^{\tr}\gamma - 
\theta w).
\label{eq:plm_score}
\end{equation}
Due to the orthonormality of $\beta_m = (\beta_{m,1},\ldots,\beta_{m,k_m})^{\tr}$, the Fisher information matrix under $\mc{P}_m$ takes an appealing form, in particular $i_{m,11} = \sigma^{-2}I_{k_m}$ where $I_{k_m}$ is the $k_m$-dimensional identity matrix, and $i_{m,01} = \sigma^{-2}\, \E\, W \beta_m(Z)^{\tr}$. The efficient score under $\mc{P}_m$ is therefore 
\begin{equation}
\tilde{\ell}_{\theta,T_m^{-1}\gamma}(x) 
= 
\frac{1}{\sigma^2}\big(w - \beta_m(z)^{\tr}\E\,[W \beta_m(Z)] \big)(y - \beta_m(z)^{\tr}\gamma - \theta w).
\notag
\end{equation}
Define $b_0(z) = \E\,(W \given Z = z)$ and  $b_m(z) = \beta_m(z)^{\tr}\E\,[W \beta_m(Z)] =  \sum_{j=1}^{k_m}\beta_{m,j}(z) \langle \beta_{m,j},b_0\rangle$. As the $\beta_m$ form an orthonormal basis for $L_2([0, 1], P_Z)$, we have $b_m\to b_0$ in $L_2([0, 1], P_Z)$. We first turn to Assumption~\ref{ass:sieve-approx}. The log-likelihood ratio of $P_{m}^n$ with respect to $P_0^n$ can be written
\begin{equation}
\log \frac{\dd P_{m_n}^n}{\dd P_0^n} = \frac{1}{\sqrt{n}}\sum_{i=1}^n h_{m,n}(Z_i)(\eps_i/\sigma) - \frac{1}{2n}\sum_{i=1}^n h_{m,n}(Z_i)^2,
\notag
\end{equation}
in terms of $h_{m,n}(z) = (\sqrt{n}/\sigma)(\beta_m(z)^{\tr}\gamma_0 - \eta_0(z))$ and $\eps_i = Y_i - \eta_0(Z_i) - \theta_0 W_i$. Assume that there is a subsequence $(m_n)_{n \geq 1}$ and a function $h$ such that $h_{m_n,n}\to h$ in $L_2(P_0)$. (Conditions under which this holds are given as~\ref{plm-eta-smooth} \&~\ref{plm-eta-rate} in Appendix section~\ref{appendix:PLM}.)
Under this assumption it follows from the fact that the data are i.i.d.~with $\eps$ independent of $Z$ and the law of large numbers that, with $g(x) = h(z)(y - \eta_0(z) - \theta_0 w)/\sigma$, we have 
\begin{equation}
\log \frac{\dd P_{m_n}^n}{\dd P_0^n} = \frac{1}{\sqrt{n}}\sum_{i=1}^n g(X_i) - \half \E_0\, g(X_1)^2 + o_{P_0^n}(1),
\label{eq:plm_RN}
\end{equation}
as $\E_0\, h(Z)^2 = \E_0\, h(Z)^2 (\eps / \sigma)^2$. To conclude that Assumption~\ref{ass:sieve-approx} holds, we need also show that $\E_0 \, g(X) \tilde{\ell}(X) = 0$, which requires that we determine the form of the efficient score. Working formally, we expect that $\tilde{\ell}_{\theta_0,T_m^{-1}\gamma_0}$ will converge to
\begin{equation*}
   u(x)\define \frac{1}{\sigma^2}\big(w - b_0(z) \big)(y - \eta_0(z) - \theta_0 w).
\end{equation*}
We verify that this is true in $L_1(P_0)$ in Appendix~\ref{appendix:PLM}. As $ \tilde{\ell}_{\theta_0,T_{m}^{-1}\gamma_0}(X) \sim \sigma^{-2}(W - b_m(Z))\eps$ under $P_{m}$, $\E\,(|W|^{2+\delta})\,\E\,(|\eps|^{2+\delta})<\infty$ by assumption, and
\begin{equation}
\E\,[\eps^2(b_m(Z) - b_0(Z))^2] = \sigma^2 \E\,[(b_m(Z) - b_0(Z))^2]\to 0,
\label{eq:plm-unif-square-int}
\end{equation}
it follows that $\tilde{\ell}_{\theta_0,T_{m_n}^{-1}\gamma_0}$ is square uniformly $P_{m_n}$-integrable. Since also $P_{m_n} \to P_0$ in total variation by~\eqref{eq:plm_RN} (via~\eqref{eq:sieve-DQM}), Lemma~\ref{lem:L2-conv-varying-measures} in the appendix yields
\begin{equation*}
    \limn \int \|\tilde{\ell}_{\theta_0,T_{m_n}^{-1}\gamma_0} p_{m_n}^{1/2} - up_0^{1/2}\|^2\,\dd\mu =0.
\end{equation*}
Since~\eqref{eq:score-approx-parametric} and~\eqref{eq:score-approx-nonparametric} also hold (as shown in Appendix~\ref{appendix:PLM}), it follows from Corollary~\ref{corollary:to_lemma3.1} that $u = \tilde{\ell}$ $P_0$-almost surely. We now have an expression for the efficient score, and can verify that Assumption~\ref{ass:sieve-approx} holds as
\begin{equation}
\E_0 \, g(X) \tilde{\ell}(X) 
=  \frac{1}{\sigma^2}\E\, h(Z)(W - b_0(Z))  =  \frac{1}{\sigma^2}\E\, h(Z)\E\,\{(W - b_0(Z)) \given Z\} = 0,
\notag
\end{equation}
In Appendix~\ref{appendix:PLM} we show that equations~\eqref{eq:score-approx-parametric} and~\eqref{eq:score-approx-nonparametric} hold, entailing that Assumption~\ref{ass:effscr-approx} also holds by Lemma \ref{lem:score-approx-implies-effscr-approx}. From the developments so far, we see that the semiparametric efficient information is $J = \sigma^{-2}(\E\,W^2 - \norm{b_0}^2) = \sigma^{-2}\E\,\big(W^2 - (\E\,[W \given Z])^2\big)$, and also note that
\begin{equation}
    J_{m_n} = \E_{m_n}\, \tilde{\ell}_{\theta_0,T_{m_n}^{-1}\gamma_0}(X)^2 = \frac{1}{\sigma^2}\big( \E\, W^2 - \sum_{j=1}^{k_{m_n}}|\langle \beta_{m_n,j},b_0\rangle|^2)
\to \frac{1}{\sigma^2}\big( \E\, W^2 - \norm{b_0}^2\big) = J,
\notag
\end{equation}
which may also be verified directly using Parseval{'}s identity.

Let $(\widehat{\theta}_{m,n},\widehat{\gamma}_{m,n})$ be the maximum likelihood estimator under $\mc{P}_m$. To establish that $\widehat{\theta}_{m_n,n}$ is best regular, we show that $h \mapsto \log \pl_{m,n}(\theta_0 + h/\sqrt{n}) - \log \pl_{m,n}(\theta_0)$ is concave and admits a quadratic expansion as in~\eqref{eq:Anh_LAN}, which by Proposition~\ref{theorem:sieve_sandwich} will allow us to conclude that $\widehat{\theta}_{m_n,n}$ is best regular. The value achieving the supremum in $\sup_{\gamma \in \Gamma_m} L_{m,n}(\theta,\gamma)$ is the least squares solution $\widehat{\gamma}(\theta) = B_{m,n}^{-1}n^{-1}\sum_{i=1}^n \beta_m(Z_i)(Y_i - \theta W_i)$, where $B_{m,n} = n^{-1}\sum_{i=1}^n \beta_m(Z_i)\beta_m(Z_i)^{\tr}$. The log-profile likelihood can then be expressed as $\log \pl_{m,n}(\theta) = - 1/(2\sigma^2)\sum_{i=1}^n\big(Y_i - \breve{Y}_{m,n,i} - \theta(W_i - \breve{W}_{m,n,i})\big)^2$, with $\breve{W}_{m,n,i} =  \beta_m(Z_i)^{\tr}B_{m,n}^{-1}n^{-1}\sum_{j=1}^n \beta_m(Z_j) W_j$ and $\breve{Y}_{m,n,i} =  \beta_m(Z_i)^{\tr}B_{m,n}^{-1}n^{-1}\sum_{j=1}^n \beta_m(Z_j) Y_j$. We see that $\log \pl_{m,n}(\theta)$ is concave in $\theta$. The expansion~\eqref{eq:Anh_LAN} is verified Appendix~\ref{appendix:PLM} under conditions~\ref{plm-cond-var-bound} and~\ref{plm-A-LAN-rates}. In summary, under the basic conditions given here along with~\ref{plm-cond-var-bound}--\ref{plm-eta-rate}, the conditions of Proposition~\ref{theorem:sieve_sandwich} are satisfied and consequently the estimator $\widehat{\theta}_{m_n,n}$ is best regular in $\mc{P}$. 

\subsection{Efficiency of the Cox partial likelihood estimator}\label{subsec:cox}
The Cox regression model differs from the partially linear model in an important way. Whilst in the partially linear model a maximum likelihood estimator for $\theta$ cannot be defined without recourse to sieves, penalisation, or the like (as discussed in the Section~\ref{sec::intro}), no such techniques are called for in the Cox regression model, as the Cox partial likelihood permits us to work directly with the semiparametric model. This entails that an estimator for the parametric part of the Cox regression model can be derived straightforwardly, without (for example) the theory developed in this paper. This theory does, however, lead to a simple proof of the efficiency of the Cox partial likelihood estimator, as we will now show. In other words, in this application the theory developed in this paper is used solely as a theoretical tool. Consequently, the choice of basis functions is of no practical importance, and one may (and, indeed, one ought to) use basis functions that make the analysis particularly tractable, which is what we do here.  

Suppose we have $n$ i.i.d.~replicates of $X = (T,\Delta,W)$ observed over $[0,1]$, with $T = \min(T^{\prime},C)$ and $\Delta = I(T^{\prime} \leq C)$, where the lifetime $T^{\prime}$ and the censoring time $C$ are independent given $W$, and $T^{\prime}$ given $W = w$ follows a Cox model, i.e., its hazard rate is of the form $\eta(t) \exp(\theta w)$ where $\theta \in \Theta \subset \real$ for simplicity, and $\eta$ belongs to the space $\mc{H}$ of one time continuously differentiable functions $\eta \colon [0,1] \to (0,\infty)$. As above, $P_{0} = P_{\theta_0,\eta_0}$ is the true data generating mechanism, while $P_m = P_{\theta_0,T_m^{-1}\gamma_0}$ are the parametric approximations. Here we take the basis function 
\begin{equation}
\beta_{m,j}(t) = k_mI_{V_{m,j}}(t),\quad\text{for $j = 1,\ldots,k_m$},
\label{eq:cox_piecewisebasis}
\end{equation}
where $V_{m,1},\ldots,V_{m,k_m}$ are disjoint intervals whose union make up $[0,1]$, and each interval is of length $1/k_m$. With this basis, a natural choice is to work under the coefficient $\gamma_0 = (\gamma_{0,1},\ldots,\gamma_{0,k_m})^{\tr}$ with $\gamma_{0,j} = \eta_0((j-1)/k_m)/k_m$ for $j = 1,\ldots,k_m$. Introduce the counting process $N(t) = \Delta I(T \leq t)$, the at-risk process $Y(t) = I(T \geq t)$, and let $(\mc{F}_{t})_{t \in [0,1]}$ be the filtration generated by these. Then, with respect to this filtration, $M(t) = N(t) - \int_0^t Y(s)\eta_0(s)\exp(\theta_0 W)\,\dd s$ and $M^m(t) = N(t) - \int_0^t Y(s)\beta_m(s)^{\tr} \gamma_0\exp(\theta_0 W)\,\dd s$ are square integrable martingales with respect to $P_0$ and $P_m$, respectively. 

Using the Taylor-expansion $\log(1 + a) = a - \half a^2 + a^2R(a)$ where $R(a) \to 0$ as $a \to 0$, the log-likelihood ratio based on the full sample can be written 
\begin{equation}
\log \frac{\dd P_{m_n}^n}{\dd P_0^n} = \frac{1}{\sqrt{n}}\sum_{i=1}^n \int_0^1 \frac{h_{m_n,n}}{\eta_0}\,\dd M_i
- \frac{1}{2n}\sum_{i=1}^n \int_0^1 \frac{h_{m_n,n}^2}{\eta_0^2}\,\dd N_i + r_{m_n,n}, 
    \notag
\end{equation}
where $h_{m,n} = \sqrt{n}(\beta_m^{\tr}\gamma_0 - \eta_0)$, and $r_{m,n} = n^{-1}\sum_{i=1}^n\int_0^1 h_{m,n}^2/\eta_0^2R(n^{-1/2}h_{m,n}/\eta_0)\,\dd N_i$
Let $(m_n)$ be so that $\sqrt{n}/k_{m_n} \to 0$. Then we can apply Lenglart{'}s inequality (e.g., \citet[p.~86]{andersen1993statistical} or~\citet[Lemma~I.3.30, p.~35]{jacod2003limit}) to show that $\log \dd P_{m_n}^n/\dd P_{0}^n$ tends to zero in probability, and Assumption~\ref{ass:sieve-approx} is satisfied with $g =0$. In fact, with the above choice of basis functions, this is the only possible limit. 

Next, we verify Assumption~\ref{ass:effscr-approx}. Under $\mathcal{P}_m$, the score functions are 
\begin{equation}
\dot{\ell}_m(X)
= W M^m(1),\quad \text{and}\quad
\dot{v}_m(X)
=  \int_0^1 \beta_m(s)(\beta_m(s)^{\tr}\gamma_0)^{-1}\,\dd M^m(s),
    \notag
\end{equation}
when evaluated in $(\theta_0,T_m^{-1}\gamma_0)$. With $s_{m}^{(k)}(t) = \E_m\, Y(t)W^k \exp(W \theta_0)$ for $k = 0,1,2$, this gives 
\begin{equation}
i_{m,01} 
 = \int_0^1 \beta_m(t) s_m^{(1)}(t)\,\dd t, \quad \text{and}\quad 
= \int_0^1 \frac{ \beta_m(t)\beta_m(t)^{\tr}     }{\beta_m(t)^{\tr}\gamma_0}s_m^{(0)}(t)\,\dd t. 
    \notag
\end{equation}
Inserting the locally constant basis functions in~\eqref{eq:cox_piecewisebasis}, we find the efficient score under $\mathcal{P}_m$,
\begin{equation}
\tilde{\ell}_m(X) =  \sum_{j=1}^{k_m}\int_{V_{m,j}}\bigg(W - 
\frac{\int_{V_{m,j}}s_m^{(1)}(u)\,\dd u}{\int_{V_{m,j}}s_m^{(0)}(u)\,\dd u} \bigg)\,\dd M^{m}(t).
    \notag
\end{equation}
 At this point it is tempting to conjecture that with $s^{(k)}$ the pointwise limits of $s_m^{(k)}$, the efficient score under $\mathcal{P}$ is $\tilde{\ell}(X) = \int_0^1 (W - s^{(1)}(t)/s^{(0)}(t) ) \,\dd M(t)$. This is indeed the case, as can be established using Lemma~\ref{lem:score-approx-implies-effscr-approx}, the conditions of which we verify in Appendix~\ref{appendix:cox} by a simple, if tedious, application of Lemma~\ref{lemma:effscr-approx-based-on-Pmn-TV}. 
 
The main message is that these lemmata, combined with Corollary~\ref{corollary:to_lemma3.1}, allow us to conclude that the efficient score under $\mathcal{P}$ is as conjectured above, that is, $\tilde{\ell}(X) = \int_0^1 (W - s^{(1)}/s^{(0)} ) \,\dd M$; and consequently, the efficient information is 
\begin{equation}
J = \E_0\, \tilde{\ell}(X)^2 
= \int_0^1 \bigg(\frac{s^{(2)}(t)}{s^{(1)}(t)} - \frac{s^{(1)}(t)}{s^{(0)}(t)}\bigg)^2 s^{(1)}(t)\eta_0(t)\,\dd t. 
    \notag
\end{equation}
This method of finding the efficient score and the efficient information, leads to a new, relatively simple, and rather parametric proof of the efficiency of the Cox partial likelihood estimator (see for example~\citet[pp.~77--81]{BKRW98} or \citet{kosorok2008introduction} for proofs that differ from ours). Let $L_{n}^{{\rm cox}}(\theta)$ be Cox{'}s partial likelihood function (see \citet{gill1984understanding} for an excellent introduction). We may then define the process 
\begin{equation}
A_{n}^{\rm cox}(h) = \log \frac{L_{n}^{{\rm cox}}(\theta_0 + h/\sqrt{n})}{L_{n}^{{\rm cox}}(\theta_0) } = 
\sum_{i=1}^n \int_0^1 \big( W_i \frac{h}{\sqrt{n}} - \log \frac{S_n^{(0)}(t,\theta_0 + h/\sqrt{n})}{S_n^{(0)}(t,\theta_0)}\big) \,\dd N_i(t),
\notag
\end{equation}
and note that $h \mapsto A_{n}^{\rm cox}(h)$ is concave. Under standard regularity conditions in the Cox regression setting -- given as \ref{cond:cx1}--\ref{cond:cx4} in Appendix~\ref{appendix:cox} -- $A_{n}^{\rm cox}(h)$ admits the expansion $A_{n}^{\rm cox}(h) = hn^{-1/2}\sum_{i=1}^n \tilde{\ell}(X_i) - \half h^2 J + o_{P_0}(1)$, 
where $\tilde{\ell}$ and $J$ are as defined above (this follows from a Taylor expansion combined with the second part of Theorem~3.2 in~\citet[p.~1106]{andersen1982cox}). Since $h \mapsto A_{n}^{\rm cox}(h)$ is concave, the Basic Corollary in~\citet{HjortPollard93} entails that the maximiser of $L_{n}^{{\rm cox}}(\theta)$, say $\widehat{\theta}_n^{\rm cox}$, satisfies~\eqref{eq:semiparam-asymp-linear}, that is $\sqrt{n}(\widehat{\theta}_n^{\rm cox} - \theta_0) = n^{-1/2} \sum_{i=1}^n J^{-1}\tilde{\ell}(X_i) + o_{P_0^n}(1)$.
That the Cox partial likelihood estimator $\widehat{\theta}_n^{\rm cox}$ is best regular under $\mathcal{P}$ now follows from Lemma~25.23 and Lemma~25.25 in \citet[pp.~367--369]{vandervaart1998asymptotic}.

\section{Conclusion}\label{sec:concl}
This paper develops an alternative approach to establishing the asymptotic normality and efficiency of certain approximate maximum likelihood estimators in semiparametric models. These estimators are contiguous sieve estimators, being maximum likelihood estimators in contiguously growing parametric models. The approach detailed in this paper, however, departs substantially from the conventional sieve literature as we work both \emph{with} and \emph{under} the approximating contiguous parametric models, switching back to the semiparametric model in the limit via Le Cam's third lemma. 
Working \emph{with} (growing) parametric models allows for a straightforward definition of a maximum likelihood estimator; working \emph{under} these same parametric models ensures that we may work \emph{as if} our approximating models were correctly specified at each step.
In the examples we have considered in detail, this approach leads to substantial simplifications and relatively straightforward proofs of asymptotic efficiency.

\begin{appendix}
\section{Technical results}\label{app:tech}
\begin{theorem}\label{thm:projection-convergence}
Let $H$ be a Hilbert space, $h_n, h\in H$, and $L_n, L$ closed linear subspaces of $H$. Let $g_n\define \Pi(h_n | L_n)$ and $g\define \Pi(h|L)$. If {\rm(i)} $h_n \to h$ and {\rm(ii)} for each $f\in L$, there is a sequence $(f_n)_{n\in \N}$ and a $N\in \N$ such that $f_n \to f$ and $f_n\in L_n$ for $n\ge N$, then $g_n\to g$.
\end{theorem}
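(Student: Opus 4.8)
The plan is to reduce the statement to an inequality between distances to the (moving) subspaces. Write the orthogonal decompositions $h_n = g_n + r_n$ with $r_n \define h_n - g_n \perp L_n$, and $h = g + r$ with $r \define h - g \perp L$. Since $h_n \to h$ by (i), showing $g_n \to g$ is the same as showing $r_n \to r$; were the $L_n$ all equal to a fixed $L$ this would be immediate from continuity of the orthogonal projection, so the entire difficulty is that the subspace moves. I would control $\|g_n - g\|$ through the Hilbert-space identity
\begin{equation}
\|g_n - g\|^2 = \|h_n - g\|^2 - \|r_n\|^2 - 2\langle r_n, g_n\rangle + 2\langle r_n, g\rangle ,
\notag
\end{equation}
which follows from $h_n - g = r_n + (g_n - g)$ and in which $\langle r_n, g_n\rangle = 0$ because $r_n \perp L_n \ni g_n$.

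For the ``easy'' half I would feed condition (ii) into this identity. Taking $f = g \in L$ and $f_n \in L_n$ (for $n$ large) with $f_n \to g$ gives $\|r_n\| = \operatorname{dist}(h_n, L_n) \le \|h_n - f_n\| \to \|h-g\|$, hence $\limsup_n \|r_n\| \le \operatorname{dist}(h, L)$; and since $(r_n)$ is bounded ($\|r_n\| \le \|h_n\|$), for any $f \in L$, picking $f_n \to f$ with $f_n \in L_n$, one has $\langle r_n, f\rangle = \langle r_n, f - f_n\rangle + \langle r_n, f_n\rangle = \langle r_n, f - f_n\rangle \to 0$, so in particular $\langle r_n, g\rangle \to 0$. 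As also $\|h_n - g\| \to \|h - g\|$, the identity shows that $\|g_n - g\| \to 0$ will follow from the reverse ``lower'' estimate
\begin{equation}
\operatorname{dist}(h, L) \;\le\; \liminf_n \operatorname{dist}(h_n, L_n) .
\notag
\end{equation}

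Proving this lower estimate is the step I expect to be the main obstacle; the natural route is weak compactness. Along any subsequence, pass to a further subsequence (still written $n_k$) on which $\operatorname{dist}(h_{n_k}, L_{n_k})$ converges and the bounded sequence $g_{n_k}$ converges weakly, $g_{n_k} \rightharpoonup \bar g$; then $r_{n_k} \rightharpoonup h - \bar g$ by (i), so weak lower semicontinuity of the norm gives $\|h - \bar g\| \le \lim_k \operatorname{dist}(h_{n_k}, L_{n_k})$, and \emph{provided} $\bar g \in L$ this yields $\operatorname{dist}(h, L) \le \lim_k \operatorname{dist}(h_{n_k}, L_{n_k})$. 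Combined with the ``easy'' $\limsup$ bound this forces $\|r_{n_k}\| \to \|h - g\|$, and then the identity above (using $\langle r_{n_k}, g_{n_k}\rangle = 0$ and $\langle r_{n_k}, g\rangle \to 0$) gives $g_{n_k} \to g$; as every subsequence of $(g_n)$ thus has a further subsequence converging to $g$, we conclude $g_n \to g$. Everything therefore hinges on the claim that weak subsequential limits of sequences drawn from the $L_n$ belong to $L$. This is the genuinely delicate point, and it does not follow from (i) and (ii) alone in the abstract (it already fails for $L_n = H$, $L = \{0\}$ and $h_n \equiv h \ne 0$); I would instead read it off the structure of the applications, where the $L_n$ are spans of the approximating nuisance scores $a_{m_n}^{\tr}\dot v_{m_n} p_{m_n}^{1/2}$, which are nested with union dense in $L$, so that any weak subsequential limit of such scores is again of the form $Db\, p_0^{1/2}$ and hence lies in $L$.
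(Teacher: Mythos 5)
Your strategy is, at bottom, the same as the paper's: extract a weakly convergent subsequence $g_{n_k}\rightharpoonup g^\star$ from the bounded sequence $(g_n)$, use condition (ii) to pair the weak limit of $h_{n_k}-g_{n_k}\in L_{n_k}^\perp$ against strongly convergent approximants $f_{n_k}\to f$ of an arbitrary $f\in L$ to conclude $h-g^\star\perp L$, and then upgrade weak to strong convergence via convergence of norms (the paper invokes the Radon--Riesz theorem where you use your polarisation identity; these are the same mechanism). The obstacle you isolate is therefore not a defect of your route relative to the paper's -- it is precisely the point at which the paper's own argument is incomplete. From $\langle h-g^\star,f\rangle=0$ for all $f\in L$ one only gets $\Pi g^\star=\Pi h$; the paper's conclusion ``hence $g^\star=\Pi h$'' additionally requires $g^\star\in L$, which is nowhere established. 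Your counterexample ($L_n=H$, $L=\{0\}$, $h_n\equiv h\neq 0$, so that (i) and (ii) hold but $g_n=h\not\to 0=g$) shows this cannot be established: condition (ii) is only the ``liminf'' half of Mosco convergence of $L_n$ to $L$, and the conclusion genuinely needs the ``limsup'' half as well, namely that every weak subsequential limit of a sequence $x_{n_k}\in L_{n_k}$ belongs to $L$ (trivially satisfied if, for instance, $L_n\subseteq L$ for all large $n$).

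So the honest verdict is that your proposal is as complete as a proof of the statement as written can be, and the step you flag as ``genuinely delicate'' is a real gap -- in Theorem~\ref{thm:projection-convergence} itself, and hence in the paper's proof of it. The repair is to add the Mosco-type upper bound as an explicit hypothesis (iii) and then verify it wherever the theorem is applied: in Lemma~\ref{lem:score-approx-implies-effscr-approx} the relevant $L_{m_n}$ is the span of the components of $\dot{v}_{m_n}p_{m_n}^{1/2}$ and $L$ is the closure of $\{Db\,p_0^{1/2}:b\in B\}$, and condition~\eqref{eq:score-approx-nonparametric} supplies only the liminf half, so the limsup half does require a separate argument of the kind you sketch (showing that weak limits of $a_{m_n}^{\tr}\dot{v}_{m_n}p_{m_n}^{1/2}$ are again of the form $Db\,p_0^{1/2}$); note the $L_{m_n}$ are not literally nested inside $L$, so this is not automatic. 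Your ``easy half'' computations ($\limsup_n\|r_n\|\le\operatorname{dist}(h,L)$, $\langle r_n,g_n\rangle=0$, and $\langle r_n,g\rangle\to 0$) are correct as written.
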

\begin{proof}
Let $\Pi_n$ be the orthogonal projection onto $L_n$ and $\Pi$ that onto $L$. First suppose $h_n=h$ ($n\in \N$). As $(g_n)_{n\in \N}$ is bounded, any subsequence contains a weakly convergent subsequence, say $g_{n_k} \rightharpoonup g^\star$. By self-adjointness and idempotency
\begin{equation}\label{thm:projection-convergence:eq:step1}
        \IP{g_{n_k}}{g_{n_k}} = \IP{\Pi_{n_k} h}{\Pi_{n_k} h} = \IP{h}{\Pi_{n_k} h} \to \IP{h}{g^\star}.
\end{equation}
Let $f\in L$. By hypothesis there are $(f_n)_{n\in \N}$ with $f_n\to f$ and $f_n\in L_n$ for $n\ge N_1$. So $f_{n_k}\to f$ and $f_{n_k}\in L_{n_k}$ for  $k\ge K_1$. Since $h - \Pi_{n_k}h \rightharpoonup h - g^\star$, by Proposition 16.7 in \cite{royden2010real} and the fact that $h - g_{n_k}\in L_{n_k}^\perp$ for each $k$, $\IP{h - g^\star}{f} = \lim_{k\to\infty} \IP{h - g_{n_k}}{f_{n_k}} = 0$. Hence $g^\star = \Pi h = g$. By self-adjointness and idempotency of $\Pi$ and \eqref{thm:projection-convergence:eq:step1}, $ \lim_{k\to\infty}\IP{g_{n_k}}{g_{n_k}} = \IP{h}{\Pi h} = \IP{\Pi h}{\Pi h} = \IP{g}{g}$, and hence $g_{n_k}\to g$ by the Radon--Riesz Theorem. 
As the initial subsequence was arbitrary, $g_n\to g$. To complete the proof, for $h_n\to h$ an arbitrary convergent sequence, $\|g_n - g\| \le \|h_n - h\| + \|\Pi_n h - \Pi h\|$. The first right hand side term is $o(1)$ by assumption; the second by the case with $h_n = h$.    
\end{proof}

\begin{lemma}\label{lem:L2-conv-varying-measures}
    Let $P_n, P_0\ll \mu$ with densities $p_n, p_0$. Suppose that {\rm(i)} $P_n \to P_0$ in total variation; {\rm(ii)} $f_{n}$ converges to $f_0$ in $P_0$-probability; and {\rm(iii)} $f_n$ is uniformly square $P_n$-integrable.
    Then $\limn \int \big(f_n p_{n}^{1/2} - f_0p_0^{1/2}\big)^2\dmu =0$. 
\end{lemma}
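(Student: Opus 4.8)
The plan is to prove the $L_2(\mu)$-convergence by a truncation argument, writing $\|\cdot\|_\mu$ for the $L_2(\mu)$-norm. I would first record three preliminary facts. First, from (i) we have $\int|p_n-p_0|\,\dd\mu\to0$, and since $(\sqrt a-\sqrt b)^2\le|a-b|$ for $a,b\ge0$, also $\int(p_n^{1/2}-p_0^{1/2})^2\,\dd\mu\to0$. Second, (iii) gives $M:=\sup_n\int f_n^2\,\dd P_n<\infty$ (choose $K_0$ with $\sup_n\int_{|f_n|\ge K_0}f_n^2\,\dd P_n\le1$; then $\int f_n^2\,\dd P_n\le K_0^2+1$). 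Third -- and this is needed because square $P_0$-integrability of $f_0$ is \emph{not} assumed -- one gets $\int f_0^2\,\dd P_0\le M<\infty$ by a one-off Fatou argument: extract a subsequence along which $f_n\to f_0$ $P_0$-a.e.\ (by (ii)) and then a further subsequence along which $p_n\to p_0$ $\mu$-a.e.\ (by $\int|p_n-p_0|\,\dd\mu\to0$), and apply Fatou's lemma to $f_n^2p_n\ge0$, noting $\liminf f_n^2p_n\ge f_0^2p_0$ $\mu$-a.e.\ (with equality where $p_0>0$).

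For $K>0$ write $f^K:=(f\wedge K)\vee(-K)$ for the truncation. By Minkowski's inequality,
\[
\|f_np_n^{1/2}-f_0p_0^{1/2}\|_\mu\le \|(f_n-f_n^K)p_n^{1/2}\|_\mu+\|f_n^Kp_n^{1/2}-f_0^Kp_0^{1/2}\|_\mu+\|(f_0-f_0^K)p_0^{1/2}\|_\mu.
\]
The first term equals $(\int(f_n-f_n^K)^2\,\dd P_n)^{1/2}\le(\int_{|f_n|>K}f_n^2\,\dd P_n)^{1/2}$, which by (iii) tends to $0$ as $K\to\infty$ uniformly in $n$; the third term is at most $(\int_{|f_0|>K}f_0^2\,\dd P_0)^{1/2}\to0$ as $K\to\infty$ by dominated convergence, using $\int f_0^2\,\dd P_0<\infty$.

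For the middle term I would split once more: $f_n^Kp_n^{1/2}-f_0^Kp_0^{1/2}=(f_n^K-f_0^K)p_n^{1/2}+f_0^K(p_n^{1/2}-p_0^{1/2})$, whence
\[
\|f_n^Kp_n^{1/2}-f_0^Kp_0^{1/2}\|_\mu\le\Big(\int(f_n^K-f_0^K)^2\,\dd P_n\Big)^{1/2}+K\|p_n^{1/2}-p_0^{1/2}\|_\mu,
\]
and the last summand vanishes as $n\to\infty$ for each fixed $K$ by the first preliminary. For the remaining summand, since $|f_n^K-f_0^K|\le2K$,
\[
\int(f_n^K-f_0^K)^2\,\dd P_n=\int(f_n^K-f_0^K)^2\,\dd P_0+\int(f_n^K-f_0^K)^2\,\dd(P_n-P_0),
\]
where the last integral is bounded in absolute value by $4K^2\int|p_n-p_0|\,\dd\mu\to0$, while $\int(f_n^K-f_0^K)^2\,\dd P_0\to0$ because $t\mapsto(t\wedge K)\vee(-K)$ is continuous, so $f_n^K\to f_0^K$ in $P_0$-probability, and the integrand is bounded by $4K^2$. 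Hence, for each fixed $K$, the middle term tends to $0$ as $n\to\infty$. Combining the three estimates, $\limsup_n\|f_np_n^{1/2}-f_0p_0^{1/2}\|_\mu\le(\sup_n\int_{|f_n|>K}f_n^2\,\dd P_n)^{1/2}+(\int_{|f_0|>K}f_0^2\,\dd P_0)^{1/2}$ for every $K$, and letting $K\to\infty$ gives the claim.

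The main obstacle is that the integrating measure $P_n$ in the term $\int(f_n^K-f_0^K)^2\,\dd P_n$ varies with $n$ and need not dominate $P_0$; this is precisely where total-variation convergence (rather than mere setwise/pointwise convergence of densities) is used, in tandem with the boundedness that truncation buys. A secondary point requiring care is the \emph{a priori} unknown integrability $\int f_0^2\,\dd P_0<\infty$, which the one-off Fatou step in the first paragraph supplies; if one preferred, this could instead be folded into a global subsequence argument, but isolating it keeps the main estimate running along the full sequence.
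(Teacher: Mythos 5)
Your proof is correct, and it takes a genuinely different route from the paper's. The paper expands the square, reducing the claim to $\int f_n^2\,\dd P_n \to \int f_0^2\,\dd P_0$ (handled by a Vitali-type theorem for varying measures) and to the convergence of the cross term $\int f_nf_0\,p_n^{1/2}p_0^{1/2}\dmu$, which it controls by introducing the normalised geometric-mean measure $Q_n$ with density proportional to $p_n^{1/2}p_0^{1/2}$, showing $d_{\rm TV}(Q_n,P_0)\to 0$, and then sandwiching via generalized Fatou lemmas and Cauchy--Schwarz. You instead truncate at level $K$ and split by Minkowski: uniform square integrability kills the tail of $f_n$ uniformly in $n$, the \emph{a priori} bound $\int f_0^2\,\dd P_0<\infty$ (which you correctly supply by a one-off subsequence-plus-Fatou argument, playing the role of the paper's citation of Serfozo's lemma) kills the tail of $f_0$, and the bounded middle piece is handled with only the elementary inequality $\int(p_n^{1/2}-p_0^{1/2})^2\dmu\le\int|p_n-p_0|\dmu$ together with bounded convergence in $P_0$-probability. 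Each step checks out: in particular the continuity of the truncation map justifies $f_n^K\to f_0^K$ in $P_0$-probability, and the error term $\int(f_n^K-f_0^K)^2\,\dd(P_n-P_0)$ is indeed $O(K^2)\cdot\int|p_n-p_0|\dmu\to0$ for fixed $K$. What your approach buys is self-containedness: it uses no external results on integral convergence under varying measures, only classical Fatou, bounded convergence, and the total-variation hypothesis. What the paper's approach buys is brevity on the page (given the cited lemmas) and, as a by-product, the explicit second-moment convergence $\int f_n^2\,\dd P_n\to\int f_0^2\,\dd P_0$, which your argument also yields but only implicitly.
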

\begin{proof}
    $\int f_0^2\darg{P_0} < \infty$ by a version of Fatou's Lemma (e.g. Lemma 2.2 in \cite{serfozo1982}). Expansion of the square yields
    \begin{equation*}
        \int \big(f_n p_{n}^{1/2} - f_0p_0^{1/2}\big)^2\dmu = \int f_n^2 \darg{P_n} + \int f_0^2 \darg{P_0}  -2 \int f_nf_0 p_n^{1/2}p_0^{1/2} \dmu. 
    \end{equation*}
    Combining (i), (ii), (iii) and a version of Vitali's convergence theorem (Corollary 2.9 in \citealp{feinberg2016uniform}) gives $\limn \int f_n^2 \darg{P_n} = \int f_0^2\darg{P_0}$. Hence the proof will be complete if we show that $\limn \int f_nf_0 p_n^{1/2}p_0^{1/2}\dmu = \int f_0^2\darg{P_0}$.

    Let $Q_n$ be the probability measure with $\mu$-density $q_n\define c_n p_{n}^{1/2}p_0^{1/2}$ where $c_n$ is the normalising constant. We have $ c_n^{-1} \define \int  p_{n}^{1/2}p_0^{1/2}\dmu = 1 -\frac{1}{2}\int (p_{n}^{1/2} - p_0^{1/2})^2\dmu \to 1$ as  $\int (p_{n}^{1/2} - p_0^{1/2})^2\dmu\le 2d_{{\rm TV}}(P_n, P_0)$, with $d_{{\rm TV}}$ the total variation distance~\citep[e.g.,][Lemma 2.15]{strasser1985mathematical}. Similarly, 
    \begin{align*}
        \int |q_n - p_0|\dmu 
        &\le \int p_0^{1/2} |c_n| |p_n^{1/2} - p_0^{1/2}|\dmu + \int p_0 |c_n-1|\dmu\\
        &\le |c_n| \bigg(\int p_0 \dmu\bigg)^{1/2} \bigg(\int (p_n^{1/2} - p_0^{1/2})^2\dmu\bigg)^{1/2} + |c_n-1|
        \to 0,
    \end{align*}
    implying that $d_{{\rm TV}}(Q_n, P_0)\to 0$. Now, let $g_n \define f_nf_0$. Note that $h_n \define g_n + |g_n| \ge 0$. Since $|g_n| \to f_0^2$ in $P_0$-probability, we have $\liminf_{n\to\infty} \int |g_n| \darg{Q_n} \ge \int f_0^2 \darg{P_0}$ by a version of Fatou's lemma (Corollary 2.3 in \citealp{feinberg2016uniform}). Additionally, by the Cauchy--Schwarz inequality
    \begin{equation*}
        \limsup_{n\to\infty}\int |g_n|\darg{Q_n} \le \limsup_{n\to\infty} c_n \bigg(\int f_n^2 \darg{P_n} \bigg)^{1/2}\bigg(\int f_0^2 \darg{P_0} \bigg)^{1/2} = \int f_0^2\darg{P_0}.
    \end{equation*}
    In consequence, $\limn \int |g_n|\darg{Q_n} = \int f_0^2\darg{P_0}$.
    An entirely analogous argument applied to $h_n\ge 0$ shows that $\limn \int h_n\darg{Q_n} = 2\int f_0^2\darg{P_0}$.
    Combining these two limit results yields
    \begin{equation*}
        \limn \int f_nf_0 \darg{Q_n} = \limn\int h_n - |g_n| \darg{Q_n} = \int f_0^2\darg{P_0}.
    \end{equation*}
    Since $\int f_nf_n p_n^{1/2}p_0^{1/2}\dmu = c_n^{-1} \int f_nf_0\darg{Q_n}$ and $c_n\to 1$, this completes the proof.
\end{proof}

We next provide proofs of Lemmas~\ref{lemma:effscr-approx-based-on-Pmn-TV} and~\ref{lemma:Fisher_approx_implies_effscr_approx}.
\begin{proof}[Proof of Lemma~\ref{lemma:effscr-approx-based-on-Pmn-TV}]
    Since Assumption \ref{ass:sieve-approx} holds, so does~\eqref{eq:sieve-DQM} and hence $P_{m_n} \to P_0$ in total variation. Apply Lemma \ref{lem:L2-conv-varying-measures}.
\end{proof}

\begin{proof}[Proof of Lemma~\ref{lemma:Fisher_approx_implies_effscr_approx}]
   In case (i) we have $f_{m_n}p_{m_n}^{1/2} \to f_0p_0^{1/2}$ in $\mu$-measure. The conclusion follows from Proposition 2.29 in \cite{vandervaart1998asymptotic} as $\limsup_{n}\int (f_{m_n}p_{m_n}^{1/2})^2\dmu \le \int (f_0p_0^{1/2})^2\dmu <\infty$. In case (ii) the result follows directly from Proposition S3.1 in the supplementary material to \cite{hoesch24locally}.
\end{proof}

\section{Additional details for Section~\ref{ssec:sieve-pl:random-quadratic}}\label{sec:pl-conditions} 

\subsection{Proofs of Proposition \ref{prop:random-pl-expansion-MLE-asymp-linear} and Theorem \ref{thm:sieve-pl}}

\begin{proof}[Proof of Proposition \ref{prop:random-pl-expansion-MLE-asymp-linear}] Let $\Delta_{n}\define n^{-1/2}\sum_{i=1}^n \tilde{\ell}_{m_n}(X_i)$ and $\widehat{h}_{n}\define \sqrt{n}(\hat{\theta}_{n, m_n} - \theta)$. Applying~\eqref{eq:pl-expansion} with $\tilde\theta_n = \widehat{\theta}_{m_n,n}$ gives
    \begin{equation*}
        \log \pl_{n, m_n}(\widehat{\theta}_{m_n,n}) = \log \pl_{n, m_n}(\theta_0) + \widehat{h}_n^\tr \Delta_n - \frac{1}{2}\widehat{h}_n^\tr J_{m_n}\widehat{h}_n + r_{n}(\hat{\theta}_{n, m_n}),
    \end{equation*}
    where $r_{n}(\widehat{\theta}_{m_n,n}) = o_{P_{m_n}^n}(\|\widehat{h}_n\| + 1)^2$. Since $J_{m_n} = \E_{m_n}\tilde{\ell}_{m_n}(X)\tilde{\ell}_{m_n}(X)^\tr$, Assumptions \ref{ass:nonsingular-effinfo}~\&~\ref{ass:effscr-approx} ensure that $J_{m_n}^{-1}\Delta_n =  O_{P_{m_n}^n}(1)$. By~\eqref{eq:pl-expansion} with $\tilde\theta_n = \theta + n^{-1/2}J_{m_n}^{-1}\Delta_n$ and  $r_{n}(\tilde\theta_n) = o_{P_{m_n}^n}(1)$, we have $\log \pl_{n, m_n}(\tilde\theta_n)= \log \pl_{n, m_n}(\theta_0) + \Delta_n^\tr J_{m_n}^{-1}\Delta_n - \frac{1}{2}\Delta_n^\tr J_{m_n}^{-1}\Delta_n + r_{n}(\tilde{\theta}_{n})$. By definition, $\log \pl_{n, m_n}(\widehat{\theta}_{m_n,n})$ is larger than $\log \pl_{n, m_n}(\tilde\theta_{n})$. Hence 
    \begin{equation*}
        \widehat{h}_n^\tr \Delta_n - \frac{1}{2}\widehat{h}_n^\tr J_{m_n}\widehat{h}_n  - \frac{1}{2}\Delta_n^\tr J_{m_n}^{-1}\Delta_n \ge -o_{P_{m_n}^n}(\|\widehat{h}_n\| + 1)^2.
    \end{equation*}
    The left hand side of the preceding display is equal to the left hand side of
    \begin{equation*}
        -\frac{1}{2}\left(\widehat{h}_n - J_{m_n}^{-1}\Delta_n\right)^\tr J_{m_n}\left(\widehat{h}_n - J_{m_n}^{-1}\Delta_n\right) \le -\frac{c}{2} \|\widehat{h}_n - J_{m_n}^{-1}\Delta_n\|^2,
    \end{equation*}
    where $0 < c \le \lambda_{\min}(J_{m_n})$ for all sufficiently large $n$, where $\lambda_{\min}$ is the smallest eigenvalue of $J_{m_n}$, and we use that these are bounded below for $n$ sufficiently large. Combination of the preceding two displays yields
    \begin{equation*}
        \|\widehat{h}_n -J_{m_n}^{-1}\Delta_n\| = 
        o_{P_{m_n}^n}(\|\widehat{h}_n\| + 1).
    \end{equation*}
Since $\|J_{m_n}^{-1}\Delta_n\| = O_{P_{m_n}^n}(1)$, the triangle inequality implies that 
    \begin{equation*}
        \|\widehat{h}_n\| \le \|\widehat{h}_n - J_{m_n}^{-1}\Delta_n\| + \|J_{m_n}^{-1}\Delta_n\| = o_{P_{m_n}^n}(\|\widehat{h}\|_n + 1) + O_{P_{m_n}^n}(1) =O_{P_{m_n}^n}(1).
    \end{equation*}
Using this in the penultimate display yields $\|\widehat{h}_n - J_{m_n}^{-1}\Delta_n\| = o_{P_{m_n}^n}(1)$, implying \eqref{eq:sieve-asymp-linear}.
\end{proof}

\begin{remark} In the proof of Proposition~\ref{prop:random-pl-expansion-MLE-asymp-linear} the expansion~\eqref{eq:pl-expansion} is used for two different $\tilde\theta_n$, namely $\tilde\theta_n = \widehat{\theta}_{m_n,n}$ and $\tilde\theta_n \define \breve{\theta}_{m_n, n} \define n^{-1/2}J_{m_n}^{-1}\Delta_n$, where $\Delta_n = n^{-1/2}\sum_{i=1}^n\effscrarg{m_n}(X_i)$. 
    Under Assumptions \ref{ass:DQM}-\ref{ass:effscr-approx}, $\sqrt{n}(\breve{\theta}_{m_n, n} - \theta_0) = J_{m_n}^{-1}\Delta_n = O_{P_{m_n}^n}(1)$,
    as noted in the proof. Therefore, if one establishes that also $\sqrt{n}(\widehat\theta_{m_n,n} - \theta) =O_{P_{m_n}}(1)$, then it suffices to show that $r_{n}(\tilde\theta_n) = o_{P_{m_n}}(1)$ for all $\tilde{\theta}_n$ such that $\sqrt{n}(\tilde\theta_n - \theta) = O_{P_{m_n}}(1)$.
\end{remark}

\begin{proof}[Proof of Theorem \ref{thm:sieve-pl}] Let $\tilde{h}_n = \tilde{\theta}_n - \theta_0$. For fixed $\tilde{\psi}$, a Taylor expansion yields $\PP_n l_{m_n}(\tilde{\theta}_n,\tilde{\psi})
- \PP_n l_{m_n}(\theta_0,\tilde{\psi})
 = \tilde{h}_n^{\tr}\PP_n \dot{l}_{m_n}(\theta_0 ,\tilde{\psi}) + \half \tilde{h}_n^{\tr} \PP_n \ddot{l}_{m_n}(s_n,\tilde{\psi})\tilde{h}_n$,
for a $s_n$ between $\tilde\theta_n$ and $\theta_0$. Multiplying through by $n$ and replacing $\PP_n \ddot{l}_{m_n}(s_n, \tilde{\psi})$ by $-J_{m_n} + o_{P_{m_n}^n}(1)$ and $\sqrt{n}\,\PP_n \dot{l}_{m_n}(\theta_0,\tilde{\psi})$ by $\sqrt{n}\PP_n \tilde{\ell}_{m_n} + o_{P_{m_n}^n}(\sqrt{n}\|\tilde{h}_n\|+1)$ on the right hand side gives
\begin{equation}
    n\PP_n l_{m_n}(\tilde{\theta}_n,\tilde{\psi})
- n\PP_n l_{m_n}(\theta_0,\tilde{\psi})
 = \tilde{h}_n^{\tr}\sumin \effscrarg{m_n}(X_i) - \half n\tilde{h}_n^{\tr} J_{m_n}\tilde{h}_n + o_{P_{m_n}^n}((\sqrt{n}\|\tilde{h}_n\|  + 1)^2),
 \notag
\end{equation}
for $\tilde{\psi}$ equal to either $(\tilde\theta_n, \widehat{\gamma}(\tilde\theta_n)$ or $(\theta_0, \widehat{\gamma}(\theta_0))$. Applying the sandwiching bounds in~\eqref{eq:sandwich_lower} and~\eqref{eq:sandwich_upper} gives~\eqref{eq:pl-expansion}.
\end{proof}

\subsection{Sufficient conditions for Theorem~\ref{thm:sieve-pl}} As demonstrated in the examples in the main text, in some models, our approach of working under the parametric models $P_{m}$ allows \eqref{eq:sieve-asymp-linear} to be established directly. For cases where this is not possible, Proposition~\ref{prop:random-pl-expansion-MLE-asymp-linear} and Theorem~\ref{thm:sieve-pl} provide a general result for such estimators. This result is based on \cite{murphy2000profile} with the key difference being that we consider a sieved profile likelihood in which, at each step $m$, true least-favourable submodels necessarily exist.  This avoids the requirement to construct {`}approximate least favourable submodels{'} as in \cite{murphy2000profile}. Nevertheless, as the theoretical analysis of this contiguous sieve profile likelihood estimator proceeds very similarly to the analysis of the semiparametric profile likelihood estimator considered in \cite{murphy2000profile}, Theorem \ref{thm:sieve-pl} states the result under high level conditions. Here we give lower-level structural conditions which imply the conditions~\eqref{thm:sieve-pl:eq:no-bias} and~\eqref{thm:sieve-pl:eq:info-eq} required by Theorem \ref{thm:sieve-pl}. The conditions are  similar to those given by \cite{murphy2000profile}, but require some adjustment as we deal with a sequence of parametric likelihoods. 

The following lemma splits condition~\eqref{thm:sieve-pl:eq:no-bias} into a no-bias condition and a condition relating to the empirical process $\G$, here defined by $\G f = \sqrt{n}(\EP f - P_{m_n}f)$.  

\begin{lemma}\label{lem:no-bias-Donsker}
    Suppose that for any $\tilde\theta_n = \theta_0 + o_{P_{m_n}^n}(1)$, 
     \begin{equation}\label{lem:no-bias-Donsker:eq:no-bias}
            \E_{m_n}\dot{l}_{m_n}(\theta_0, \tilde\theta_n, \widehat{\gamma}(\tilde\theta_n)) =o_{P_{m_n}^n}(\|\tilde\theta_n - \theta_0\| + n^{-1/2})~,
        \end{equation}
        and 
        \begin{equation}\label{lem:no-bias-Donsker:eq:Donsker}
            \G \dot{l}_{m_n}(\theta_0, \tilde\theta_n, \widehat{\gamma}(\tilde\theta_n)) = \G\effscrarg{m_n} + o_{P_{m_n}^n}(1).  
        \end{equation}
Then~\eqref{thm:sieve-pl:eq:no-bias} holds.   
\end{lemma}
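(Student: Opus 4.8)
The plan is to decompose the left-hand side of~\eqref{thm:sieve-pl:eq:no-bias} into an empirical-process part and a mean part and then apply the two hypotheses of the lemma to each. Concretely, since $\G f = \sqrt{n}(\PP_n f - P_{m_n}f)$, taking $f = \dot{l}_{m_n}(\theta_0,\tilde\theta_n,\widehat{\gamma}(\tilde\theta_n))$ gives
\begin{equation*}
\sqrt{n}\PP_n\dot{l}_{m_n}(\theta_0,\tilde\theta_n,\widehat{\gamma}(\tilde\theta_n)) = \G\dot{l}_{m_n}(\theta_0,\tilde\theta_n,\widehat{\gamma}(\tilde\theta_n)) + \sqrt{n}\,\E_{m_n}\dot{l}_{m_n}(\theta_0,\tilde\theta_n,\widehat{\gamma}(\tilde\theta_n)).
\end{equation*}
The hypothesis $\tilde\theta_n = \theta_0 + o_{P_{m_n}^n}(1)$ justifies the use of~\eqref{lem:no-bias-Donsker:eq:no-bias} and~\eqref{lem:no-bias-Donsker:eq:Donsker} with this particular (random) sequence.

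For the second term on the right, \eqref{lem:no-bias-Donsker:eq:no-bias} gives $\E_{m_n}\dot{l}_{m_n}(\theta_0,\tilde\theta_n,\widehat{\gamma}(\tilde\theta_n)) = o_{P_{m_n}^n}(\norm{\tilde\theta_n - \theta_0} + n^{-1/2})$, so multiplying by $\sqrt{n}$ produces $o_{P_{m_n}^n}(\sqrt{n}\norm{\tilde\theta_n - \theta_0} + 1)$, which is precisely the remainder permitted in~\eqref{thm:sieve-pl:eq:no-bias}. For the first term, \eqref{lem:no-bias-Donsker:eq:Donsker} replaces $\G\dot{l}_{m_n}(\theta_0,\tilde\theta_n,\widehat{\gamma}(\tilde\theta_n))$ by $\G\effscrarg{m_n} + o_{P_{m_n}^n}(1)$. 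It then remains to observe that $\G\effscrarg{m_n} = \sqrt{n}\PP_n\effscrarg{m_n}$: writing $\G\effscrarg{m_n} = \sqrt{n}\PP_n\effscrarg{m_n} - \sqrt{n}\,\E_{m_n}\effscrarg{m_n}(X)$, the last term vanishes because the parametric efficient score $\effscrarg{m_n} = \dot{\ell}_{m_n} - i_{m_n,01}i_{m_n,11}^{-1}\dot{v}_{m_n}$ is a linear combination of the scores $\dot{\ell}_{m_n}$ and $\dot{v}_{m_n}$, each of which integrates to zero under $P_{m_n}$. Collecting the three pieces and recalling $\effscrarg{m_n} = \tilde{\ell}_{m_n}$ yields~\eqref{thm:sieve-pl:eq:no-bias}.

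Every step here is a direct substitution, so I do not anticipate a genuine obstacle; the only points needing a line of comment are the mean-zero property of $\tilde{\ell}_{m_n}$ under $P_{m_n}$ and the elementary bookkeeping $\sqrt{n}\cdot o_{P_{m_n}^n}(\norm{\tilde\theta_n - \theta_0} + n^{-1/2}) = o_{P_{m_n}^n}(\sqrt{n}\norm{\tilde\theta_n - \theta_0} + 1)$. All the real work is encapsulated in the two hypotheses: \eqref{lem:no-bias-Donsker:eq:no-bias} is a no-bias condition of the type familiar from profile-likelihood theory, and \eqref{lem:no-bias-Donsker:eq:Donsker} is a stochastic-equicontinuity (Donsker-type) condition along the sieve; the lemma merely records that the two together deliver the high-level condition required by Theorem~\ref{thm:sieve-pl}.
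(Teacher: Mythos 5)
Your proposal is correct and coincides with the paper's own proof: the same decomposition $\sqrt{n}\PP_n f = \G f + \sqrt{n}\,\E_{m_n} f$, the same application of the two hypotheses to the two terms, and the same use of $\E_{m_n}\effscrarg{m_n} = 0$ to identify $\G\effscrarg{m_n}$ with $\sqrt{n}\PP_n\effscrarg{m_n}$. Nothing to add.
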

\begin{proof}\belowdisplayskip=-12pt By~\eqref{lem:no-bias-Donsker:eq:no-bias} and \eqref{lem:no-bias-Donsker:eq:Donsker}, and since $\E_{m_n}\effscrarg{m_n} = 0$, 
    \begin{align*}
        \sqrt{n}\EP\dot{l}_{m_n}(\theta_0, \tilde\theta_n, \widehat{\gamma}(\tilde\theta_n)) &= \G\dot{l}_{m_n}(\theta_0, \tilde\theta_n, \widehat{\gamma}(\tilde\theta_n)) + \sqrt{n}\E_{m_n} \dot{l}_{m_n}(\theta_0, \tilde\theta_n, \widehat{\gamma}(\tilde\theta_n))\\
        &= \G\effscrarg{m_n} + o_{P_{m_n}^n}(\sqrt{n}\|\tilde\theta_n - \theta_0\| +1)\\
        &=\sqrt{n}\EP \effscrarg{m_n}+ o_{P_{m_n}^n}(\sqrt{n}\|\tilde\theta_n - \theta_0\| +1).
    \end{align*}
\end{proof}
Condition~\eqref{lem:no-bias-Donsker:eq:no-bias} is a {`}no-bias{'} condition, cf.~the discussion in \cite{murphy2000profile}. Condition~\eqref{lem:no-bias-Donsker:eq:Donsker} can be shown to hold if the nuisance parameter estimator $\widehat{\gamma}$ satisfies a consistency condition and a stochastic equicontinuity type condition is satisfied. 

For any $m$, $\gamma = \gamma_m\in \R^{k_m}$ may be viewed as an eventually zero sequence in $\R^\N$. Let $\Gamma_{m}^\N$ denote the subset of $\R^\N$ corresponding to vectors in $\Gamma_m$. We equip $\R^\N$ with a topology, $\tau$, and shall need the following consistency condition to hold: For any $\tilde\theta_n = \theta_0 + o_{P_{m_n}^n}(1)$, 
\begin{equation}\label{eq:eta-profile-consistency}
  \widehat{\gamma}(\tilde\theta_n) -\gamma_0 = o_{P_{m_n}^n}(1). 
\end{equation}
That is, for any neighbourhood $\mc{U}$ of zero in $(\R^\N, \tau)$, $\limn P_{m_n}( \widehat{\gamma}(\tilde\theta_n) -\gamma_0 \in \mc{U})=1$, 
for $\widehat{\gamma}(\tilde\theta_n), \gamma_0$ considered as elements in $\R^\N$.
The topology $\tau$ is arbitrary. It may, for instance, be that induced by $\mc{H}$ (if $\mc{H}$ is a topological vector space). However, for this condition to be useful, the topology needs to be strong enough to imply certain continuity conditions.

\begin{lemma} Suppose that $\tilde\theta_n = \theta_0 + o_{P_{m_n}^n}(1)$ and that~\eqref{eq:eta-profile-consistency} holds. 
    Let $\mc{V}$ be a neighbourhood of $(0, 0)\in \R^{p}\times \R^\N$.
    Suppose that that for each $\eps, \upsilon >0$ there are random $\Delta_n(\eps, \upsilon)\ge0$ and $N(\varepsilon, \upsilon)$ such that if $n\ge N(\varepsilon, \upsilon)$ then {\rm(i)} $P_{m_n}(\Delta_n(\epsilon, \upsilon) > \upsilon)<\varepsilon$  and {\rm(ii)} 
    \begin{equation*}
        \sup_{v\in \mc{V}_n}\| \G \dot{l}_{m_n}(\theta_0, \psi_0 + v ) - \G \dot{l}_{m_n}(\theta_0, \psi_0) \|\le \Delta_n(\varepsilon, \upsilon), \qquad \psi_0 \define (\theta_0, \gamma_0),
    \end{equation*}
    where 
    $\mc{V}_n\define \{v\in \mc{V} : \psi_0 + v \in \Theta\times \Gamma_{m_n}^\N\}$.
    Then~\eqref{lem:no-bias-Donsker:eq:Donsker} holds.
\end{lemma}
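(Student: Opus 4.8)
The plan is to run the standard ``consistency $+$ stochastic equicontinuity $\Rightarrow$ asymptotic equicontinuity'' argument: localise the random argument $\tilde{\psi}_n \define (\tilde{\theta}_n, \widehat{\gamma}(\tilde{\theta}_n))$ into the neighbourhood $\mc{V}$ of $\psi_0 \define (\theta_0,\gamma_0)$ on which the equicontinuity bound (i)--(ii) bites, and then read off the conclusion. The first, purely notational, observation is that $\G\dot{l}_{m_n}(\theta_0,\psi_0) = \G\dot{l}_{m_n}(\theta_0,\theta_0,\gamma_0) = \G\effscrarg{m_n}$, because $\dot{l}_m(\theta_0,\theta_0,\gamma_0) = \effscrarg{m}$ as recorded just before Theorem~\ref{thm:get_Anh_LAN}; hence~\eqref{lem:no-bias-Donsker:eq:Donsker} is precisely the statement $\G\dot{l}_{m_n}(\theta_0,\tilde{\psi}_n) - \G\dot{l}_{m_n}(\theta_0,\psi_0) = o_{P_{m_n}^n}(1)$, which is what I would establish.

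Next I would show that the good event $E_n \define \{\tilde{\psi}_n - \psi_0 \in \mc{V}\} \cap \{\tilde{\theta}_n \in \Theta\}$ has $P_{m_n}(E_n) \to 1$. Giving $\R^p \times \R^\N$ the product of the Euclidean topology and $\tau$, the neighbourhood $\mc{V}$ contains a basic neighbourhood $U_1 \times U_2$ of $(0,0)$ with $U_1$ open in $\R^p$, $U_2$ open in $(\R^\N,\tau)$, $0 \in U_1$, $0 \in U_2$; the hypothesis $\tilde{\theta}_n - \theta_0 = o_{P_{m_n}^n}(1)$ yields $P_{m_n}(\tilde{\theta}_n - \theta_0 \in U_1) \to 1$ and (as $\theta_0$ is an interior point of $\Theta$) $P_{m_n}(\tilde{\theta}_n \in \Theta) \to 1$, while~\eqref{eq:eta-profile-consistency} yields $P_{m_n}(\widehat{\gamma}(\tilde{\theta}_n) - \gamma_0 \in U_2) \to 1$; a union bound then gives $P_{m_n}(E_n) \to 1$. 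On $E_n$ we have $\tilde{\psi}_n - \psi_0 \in \mc{V}$ and, since $\widehat{\gamma}(\tilde{\theta}_n) \in \Gamma_{m_n}$ by construction and $\tilde{\theta}_n \in \Theta$, also $\tilde{\psi}_n \in \Theta \times \Gamma_{m_n}^\N$; that is, $\tilde{\psi}_n - \psi_0 \in \mc{V}_n$, whence condition (ii) gives, on $E_n$,
\[
\bigl\| \G\dot{l}_{m_n}(\theta_0,\tilde{\psi}_n) - \G\dot{l}_{m_n}(\theta_0,\psi_0) \bigr\|
\le \sup_{v \in \mc{V}_n} \bigl\| \G\dot{l}_{m_n}(\theta_0,\psi_0 + v) - \G\dot{l}_{m_n}(\theta_0,\psi_0) \bigr\|
\le \Delta_n(\eps,\upsilon).
\]

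To finish, fix $\upsilon > 0$ and $\delta > 0$; apply the hypothesis with $\eps = \delta/2$ to get $\Delta_n(\delta/2,\upsilon)$ and $N(\delta/2,\upsilon)$, and take $n$ large enough that simultaneously $n \ge N(\delta/2,\upsilon)$ and $P_{m_n}(E_n^c) < \delta/2$. Then
\[
P_{m_n}\bigl( \bigl\| \G\dot{l}_{m_n}(\theta_0,\tilde{\psi}_n) - \G\effscrarg{m_n} \bigr\| > \upsilon \bigr)
\le P_{m_n}(E_n^c) + P_{m_n}\bigl( \Delta_n(\delta/2,\upsilon) > \upsilon \bigr) < \delta,
\]
and since $\upsilon,\delta$ were arbitrary this is~\eqref{lem:no-bias-Donsker:eq:Donsker}. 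There is no genuine analytic difficulty: the lemma just upgrades a stated equicontinuity bound over a \emph{fixed} neighbourhood into a statement about the \emph{random, estimated} argument. The two points that warrant care — and the sole sieve-specific wrinkles — are that one must combine the two separate consistency statements (for $\tilde{\theta}_n$ and for $\widehat{\gamma}(\tilde{\theta}_n)$) through the product topology on $\R^p \times \R^\N$, using that every neighbourhood of $(0,0)$ contains a product of coordinate neighbourhoods; and that one must verify the random argument lands not merely in $\mc{V}$ but in the $n$-dependent set $\mc{V}_n = \{v \in \mc{V} : \psi_0 + v \in \Theta \times \Gamma_{m_n}^\N\}$ over which (ii) is posited, which is automatic because $\widehat{\gamma}(\tilde{\theta}_n)$ is by definition a point of $\Gamma_{m_n}$ (equivalently, after zero-padding, of $\Gamma_{m_n}^\N$).
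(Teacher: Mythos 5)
Your proof is correct and follows essentially the same route as the paper's: identify $\G\dot{l}_{m_n}(\theta_0,\psi_0)$ with $\G\effscrarg{m_n}$, use the two consistency hypotheses to put the random argument in $\mc{V}$ (hence in $\mc{V}_n$, since $\widehat{\gamma}(\tilde\theta_n)\in\Gamma_{m_n}$ by construction) with probability tending to one, and conclude by a union bound with the equicontinuity bound (i)--(ii). The only difference is that you spell out the product-topology localisation and the $\tilde\theta_n\in\Theta$ point, which the paper leaves implicit.
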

\begin{proof}
    Let $X_n(v) \define \G \dot{l}_{m_n}(\theta_0, \psi_0 + v)$ and note that $X_n(0) = \G \dot{l}_{m_n}(\theta_0, \psi_0  ) = \G \effscrarg{m_n}$. Fix $\varepsilon, \upsilon>0$. By~\eqref{eq:eta-profile-consistency} there is a $N_1$ such that $n\ge N_1$ implies $P_{m_n}\left(\hat{v}_n\in \mc{V}\right) \ge 1 -\varepsilon/2$, where $\hat{v}_n \define (\tilde\theta_n - \theta_0, \widehat{\gamma}(\tilde\theta_n) - \gamma_0)$. Note that, by definition, if $\hat{v}_n\in \mc{V}$ then $\hat{v}_n\in \mc{V}_n$. 
    Using this, (i) and (ii) we have for all $n\ge \max\{N_1, N(\varepsilon/2, \upsilon)\}$,
    \begin{align*}
        P_{m_n}\left(\|X_n(\hat{v}) - X_n(0)\| >\upsilon\right) &\le  P_{m_n}\left(\hat{v}_n\notin \mc{V}\right) + P_{m_n}\left(
        \|X_n(\hat{v}) - X_n(0)\| >\upsilon, \hat{v}_n \in \mc{V}_n
        \right)\\
        &< \varepsilon/2 + P_{m_n}\big(\sup_{v\in \mc{V}_n} \|X_n(v) - X_n(0)\|>\upsilon\big)\\
        &\le \varepsilon/2 + P_{m_n}\left(\Delta_n(\varepsilon/2, \upsilon) > \upsilon\right)<\varepsilon,
    \end{align*}
    as required.
\end{proof}
Finally, condition~\eqref{thm:sieve-pl:eq:info-eq} is an approximate information equality. Provided the information equality approximately holds in the least-favourable parametric submodels, this will hold under continuity and moment conditions. 

\begin{lemma}\label{lem:sufficient-cond-information}
    Suppose that $\tilde\vartheta_n = \theta_0 + o_{P_{m_n}^n}(1)$, $\tilde\theta_n = \theta_0 + o_{P_{m_n}^n}(1)$, that~\eqref{eq:eta-profile-consistency} holds and
    \begin{enumerate}[label = {\rm(\roman*)}]
    \item \label{lem:sufficient-cond-information:itm:info-eq}$P_m [\dot{l}_{m}(\theta_0, \theta_0, \gamma_0)\dot{l}_{m}(\theta_0, \theta_0,\gamma_0)^\tr] = -P_{m}[\ddot{l}_{m}(\theta_0, \theta_0, \gamma_0)] + o(1)$ as $m\to\infty$;
        \item \label{lem:sufficient-cond-information:itm:continuity} $(\tilde\vartheta_n, \tilde\theta_n,  \widehat{\gamma}(\tilde\theta_n)) - (\theta_0, \theta_0, \gamma_0)= o_{P_{m_n}^n}(1)$ implies that \begin{equation*}
    \limn \E_{m_n}\|\ddot{l}_{m_n}(\tilde\vartheta_n, \tilde\theta_n, \widehat{\gamma}(\tilde\theta_n))- \ddot{l}_{m_n}(\theta_0, \theta_0, \gamma_0)\|= 0; 
    \end{equation*}
    \item $\|\ddot{l}_{m_n}(\theta_0, \theta_0, \gamma_0)\|$ is uniformly $P_{m_n}$-integrable.
    \label{lem:sufficient-cond-information:itm:UI}
    \end{enumerate}
    Then \eqref{thm:sieve-pl:eq:info-eq} holds.
 \end{lemma}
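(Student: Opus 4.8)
The plan is to reduce~\eqref{thm:sieve-pl:eq:info-eq} to three pieces. Abbreviating $\psi_0\define(\theta_0,\gamma_0)$ and $\widehat{\psi}_n\define(\tilde{\theta}_n,\widehat{\gamma}(\tilde{\theta}_n))$, and writing $\ddot{l}_{m_n}(s_n,\widehat{\psi}_n)$ for $\ddot{l}_{m_n}(s_n,\tilde{\theta}_n,\widehat{\gamma}(\tilde{\theta}_n))$ and $\ddot{l}_{m_n}(\theta_0,\psi_0)$ for $\ddot{l}_{m_n}(\theta_0,\theta_0,\gamma_0)$, I would write
\begin{align*}
\PP_n\ddot{l}_{m_n}(s_n,\tilde{\theta}_n,\widehat{\gamma}(\tilde{\theta}_n))+J_{m_n}
&=\underbrace{\PP_n\big[\ddot{l}_{m_n}(s_n,\widehat{\psi}_n)-\ddot{l}_{m_n}(\theta_0,\psi_0)\big]}_{\textrm{(I)}}
+\underbrace{(\PP_n-\E_{m_n})\,\ddot{l}_{m_n}(\theta_0,\psi_0)}_{\textrm{(II)}}\\
&\qquad+\underbrace{\E_{m_n}\ddot{l}_{m_n}(\theta_0,\psi_0)+J_{m_n}}_{\textrm{(III)}},
\end{align*}
and then show that (III) is deterministic and $o(1)$, while (I) and (II) are $o_{P_{m_n}^n}(1)$.

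Term (III) is dispatched directly by assumption~\ref{lem:sufficient-cond-information:itm:info-eq}: the least favourable construction gives $\dot{l}_{m_n}(\theta_0,\theta_0,\gamma_0)=\tilde{\ell}_{m_n}$ (as recorded just before Theorem~\ref{thm:get_Anh_LAN}), so $J_{m_n}=\E_{m_n}[\tilde{\ell}_{m_n}\tilde{\ell}_{m_n}^{\tr}]=\E_{m_n}[\dot{l}_{m_n}(\theta_0,\theta_0,\gamma_0)\dot{l}_{m_n}(\theta_0,\theta_0,\gamma_0)^{\tr}]$, whence~\ref{lem:sufficient-cond-information:itm:info-eq} reads exactly $\E_{m_n}\ddot{l}_{m_n}(\theta_0,\psi_0)=-J_{m_n}+o(1)$. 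For term (II), I would observe that, for each fixed $n$, the matrices $\ddot{l}_{m_n}(\theta_0,\psi_0)(X_1),\dots,\ddot{l}_{m_n}(\theta_0,\psi_0)(X_n)$ are i.i.d.\ under $P_{m_n}^n$ and, by~\ref{lem:sufficient-cond-information:itm:UI}, have uniformly $P_{m_n}$-integrable norms, and then invoke the weak law of large numbers for row-wise i.i.d.\ triangular arrays under uniform integrability: truncating the norm at a level $K$, the centred truncated average has variance $O(K^2/n)$ and is thus $o_{P_{m_n}^n}(1)$ for fixed $K$, while the centred remainder is bounded in $L_1(P_{m_n}^n)$ by $2\sup_n\E_{m_n}[\norm{\ddot{l}_{m_n}(\theta_0,\psi_0)}\mathbf{1}\{\norm{\ddot{l}_{m_n}(\theta_0,\psi_0)}>K\}]$, which vanishes as $K\to\infty$ by~\ref{lem:sufficient-cond-information:itm:UI}; letting $n\to\infty$ and then $K\to\infty$ gives (II) $=o_{P_{m_n}^n}(1)$.

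Term (I) is the crux. By the triangle inequality, the Frobenius norm of (I) is at most $\PP_n\norm{\ddot{l}_{m_n}(s_n,\widehat{\psi}_n)-\ddot{l}_{m_n}(\theta_0,\psi_0)}$. Since $s_n-\theta_0=o_{P_{m_n}^n}(1)$, $\tilde{\theta}_n-\theta_0=o_{P_{m_n}^n}(1)$, and $\widehat{\gamma}(\tilde{\theta}_n)-\gamma_0=o_{P_{m_n}^n}(1)$ by the consistency condition~\eqref{eq:eta-profile-consistency}, the random triple $(s_n,\widehat{\psi}_n)$ sits within $o_{P_{m_n}^n}(1)$ of $(\theta_0,\psi_0)$, so assumption~\ref{lem:sufficient-cond-information:itm:continuity}, applied with $\tilde{\vartheta}_n=s_n$, delivers $\E_{m_n}\norm{\ddot{l}_{m_n}(s_n,\widehat{\psi}_n)-\ddot{l}_{m_n}(\theta_0,\psi_0)}=o_{P_{m_n}^n}(1)$, where $\E_{m_n}$ integrates a fresh copy of $X$ with the random index held fixed. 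What then remains is to pass from this integrated ($L_1(P_{m_n})$) control to control of the empirical average $\PP_n$ over the very sample on which $(s_n,\widehat{\psi}_n)$ depends, i.e.\ to show $(\PP_n-\E_{m_n})\norm{\ddot{l}_{m_n}(s_n,\widehat{\psi}_n)-\ddot{l}_{m_n}(\theta_0,\psi_0)}=o_{P_{m_n}^n}(1)$. I would carry this out in the spirit of the stochastic-equicontinuity lemma stated just above: restrict to a deterministic shrinking neighbourhood of $(\theta_0,\psi_0)$ on the high-probability event that $(s_n,\widehat{\psi}_n)$ lies inside it, and apply the triangular-array law of large numbers from the previous paragraph to a suitable envelope over that neighbourhood. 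This last passage — taming the second-derivative empirical process over a neighbourhood that shrinks while the underlying law $P_{m_n}$ itself drifts — will be the main obstacle, and it is precisely the point at which the argument mirrors, with the extra cross-measure bookkeeping of having $P_{m_n}$ in place of a fixed $P_0$, the corresponding step in~\cite{murphy2000profile}.
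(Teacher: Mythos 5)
Your decomposition into (I)+(II)+(III) is exactly the paper's, and your treatment of (II) (triangular-array WLLN under the uniform integrability in \ref{lem:sufficient-cond-information:itm:UI}) and of (III) (condition \ref{lem:sufficient-cond-information:itm:info-eq} together with $\dot{l}_{m}(\theta_0,\theta_0,\gamma_0)=\tilde{\ell}_{m}$) matches the paper's proof. The problem is term (I), which you correctly identify as the crux but then do not finish: the passage from $L_1(P_{m_n})$ control of a single summand to control of the empirical average over the same sample is only sketched (``in the spirit of'' the equicontinuity lemma, ``the main obstacle''), so as written the proof has a hole at precisely the step you flag.

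That hole, however, is an artefact of your reading of hypothesis \ref{lem:sufficient-cond-information:itm:continuity}. You interpret $\E_{m_n}$ there as integrating ``a fresh copy of $X$ with the random index held fixed,'' i.e.\ as a conditional expectation given $(\tilde\vartheta_n,\tilde\theta_n,\widehat{\gamma}(\tilde\theta_n))$, which would indeed leave you needing a stochastic-equicontinuity argument for the second-derivative process. But the hypothesis asserts $\limn \E_{m_n}\|\cdots\|=0$ as a plain (deterministic) limit, so the expectation must integrate out \emph{all} the randomness, including that of the estimated parameters. Under that reading term (I) is a one-liner, and it is how the paper argues: writing $Y_{n,i}=\ddot{l}_{m_n}(\theta_0,\theta_0,\gamma_0)(X_i)$ and $\tilde{Y}_{n,i}=\ddot{l}_{m_n}(\tilde\vartheta_n,\tilde\theta_n,\widehat{\gamma}(\tilde\theta_n))(X_i)$, the triangle inequality and the fact that the pairs $(\tilde{Y}_{n,i},Y_{n,i})$ are identically distributed across $i$ give
\begin{equation*}
\E_{m_n}\Big\|\frac{1}{n}\sum_{i=1}^n(\tilde{Y}_{n,i}-Y_{n,i})\Big\|
\le \E_{m_n}\|\tilde{Y}_{n,1}-Y_{n,1}\|\to 0,
\end{equation*}
by \ref{lem:sufficient-cond-information:itm:continuity} applied with $\tilde\vartheta_n=s_n$ (admissible since $s_n=\theta_0+o_{P_{m_n}^n}(1)$), and Markov's inequality then yields $(\mathrm{I})=o_{P_{m_n}^n}(1)$. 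No empirical-process or shrinking-neighbourhood argument is needed. I would rewrite your treatment of (I) along these lines; the rest can stand.
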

\begin{proof} By the construction of the least favourable submodels and~\ref{lem:sufficient-cond-information:itm:info-eq},
\begin{equation*}
    J_{m} = \E_m\,\effscrarg{m}\effscrarg{m}^\tr = \E_m\,\dot{l}_{m}(\theta_0, \theta_0, \gamma_0)\dot{l}_{m}(\theta_0, \theta_0,\gamma_0)^\tr = - \E_m\,\ddot{l}_{m}(\theta_0, \theta_0, \gamma_0) + o(1).
\end{equation*}
Write $Y_{n, i}\define \ddot{l}_{m_n}(\theta_0, \theta_0, \gamma_0)(X_i)$ and $\tilde{Y}_{n, i}\define \ddot{l}_{m_n}(\tilde\vartheta_n, \tilde\theta_n, \widehat{\gamma}(\tilde\theta_n))(X_i)$. Then~\eqref{thm:sieve-pl:eq:info-eq} holds if both (a) $n^{-1}\sum_{i=1}^n (\tilde{Y}_{n,i} - Y_{n, i})$ and (b) $n^{-1}\sum_{i=1}^n( Y_{n,i} - \E_{m_n}Y_{n, i})$ are $o_{P_{m_n}^n}(1)$. Requirement
(a) follows as condition~\ref{lem:sufficient-cond-information:itm:continuity} implies $\E_{m_n}\|\tilde{Y}_{n,i} - Y_{n, i}\| \to 0$ and hence $n^{-1}\sum_{i=1}^n( \tilde{Y}_{n,i} - Y_{n, i})$ converges to zero in probability by Markov's inequality. (b) holds as  
under condition \ref{lem:sufficient-cond-information:itm:UI} $n^{-1}\sum_{i=1}^n( Y_{n,i} - \E_{m_n}Y_{n, i})$ converges to zero by the weak law of large numbers \cite[see, e.g.,][]{gut1992weaklaw}.
\end{proof}

\section{The applications}\label{appendix:applications} 

\subsection{The partially linear model}\label{appendix:PLM}
In this section we first verify that~\eqref{eq:score-approx-parametric} and~\eqref{eq:score-approx-nonparametric} hold in the setup of Section~\ref{sec:plm}. This entails, by way of Lemma~\ref{lem:score-approx-implies-effscr-approx}, that the partially linear model satisfies Assumption~\ref{ass:effscr-approx}. We then establish that $\tilde{\ell}_{\theta_0,T_m^{-1}\gamma_0}$ converges to the semiparametric efficient score. 

Consider submodels of the form $\tau \mapsto p_{\theta_0 + a \tau, \eta_0 + b \tau}$ where $a \in \real$ and $b \in B \subset \mc{H}$. Differentiating with respect to $\tau$, and evaluating in $\tau = 0$
\begin{equation}
\frac{\dd}{\dd \tau} \log p_{\theta + a \tau, \eta + b \tau} \big|_{\tau = 0} = a \dot{\ell}_{\theta,\eta} + \frac{b(z)}{\sigma^2}( y - \eta(z) - \theta w), 
\notag
\end{equation}
where $\dot{\ell}_{\theta,\eta}(x) = \sigma^{-2}w(y - \eta(z) - \theta w)$.
We now show that the two terms on the right are the limits, in the sense of~\eqref{eq:score-approx-parametric} and~\eqref{eq:score-approx-nonparametric}, of their parametric counterparts $\dot{\ell}_{\theta,T_m^{-1}\gamma}$ and $\dot{v}_{\theta,T_m^{-1}\gamma}$ given in~\eqref{eq:plm_score}. To this end, we use Lemma~\ref{lem:L2-conv-varying-measures}, and note for future reference that $d_{\rm TV}(P_{m_n}, P_0)\to 0$ by Assumption~\ref{ass:sieve-approx} and \eqref{eq:sieve-DQM}, and that for any $b \in B$, the sequence defined by $\tilde{b}_m(z) = \beta_m^\tr(z)\E\,\{b(Z) \beta_{m}(Z)\}$ is such that $\tilde{b}_m \to b$ in $L_2(P_Z)$. Note also that under $P_m$, $\dot{\ell}_{m}(X) \sim \sigma^{-2}W\eps$ and with $\tilde{\gamma} = \E\,\{b(Z)\beta_m(Z)\}$, $\tilde{\gamma}^\tr \dot{v}_{m}(X) \sim  \sigma^{-2}\tilde{b}_m(Z)\eps$. The uniform square $P_{m_n}$-integrability required by Lemma~\ref{lem:L2-conv-varying-measures} then follows from the integrability condition on $W$ and~\eqref{eq:plm-unif-square-int}. Additionally, it is straightforward to check that $\dot{\ell}_{\theta, \eta}(X)$ and $b(Z)(Y - \eta(Z) - \theta W)$ are square integrable under $P_0$. Finally we show $L_1(P_0)$ convergence of the parametric scores to the semiparametric scores. In particular, 
\begin{equation*}
    \int \|\dot{\ell}_m -\dot{\ell} \|\darg{P_0}
    =\sigma^{-2} \,\E\, \|W (\eta_0(Z) - \eta_m(Z))\| \to 0,
\end{equation*}
by the Cauchy--Schwarz inequality as $\eta_m(z) \define \beta_m(z)^\tr\gamma_0 \to \eta_0(z)$ in $L_2(P_Z)$. Similarly with $\tilde{\gamma}= \E\,\{b(Z)\beta_m(Z)\}$ as above, 
\begin{equation*}
    \int \|\tilde{\gamma}^\tr\dot{v}_{m} -Db \|\darg{P_0}= \sigma^{-2} \E \,\|\epsilon(\tilde{b}_m(Z) - b(Z)) +  \tilde{b}_m(Z)(\eta_0(Z) - \eta_m(Z))\| \to 0,
\end{equation*}
as $\tilde{b}_m(z)\to b(z)$ in $L_2(P_Z)$. Applying Lemma \ref{lem:L2-conv-varying-measures} then verifies~\eqref{eq:score-approx-parametric} and~\eqref{eq:score-approx-nonparametric} of Lemma~\ref{lem:score-approx-implies-effscr-approx}, meaning that Assumption~\ref{ass:effscr-approx} holds. 

To verify that $\effscrarg{m}$ converges to $\tilde{\ell}$ in $L_1(P_0)$ as claimed in Section~\ref{sec:plm} we note that similarly,
\begin{equation*}
    \int \|\effscrarg{m} - u\|\darg{P_0} = \sigma^{-2} \E\,\{\| \eps (b_0(Z) - b_m(Z)) + (W-b_m(Z))(\eta_0(Z) - \eta_m(Z))\} \to 0.
\end{equation*}
To establish~\eqref{eq:Anh_LAN}, we use Theorem~\ref{thm:get_Anh_LAN}. We have 
\begin{equation*}
    l_m(t, \theta, \gamma)(x)=
     - \frac{1}{2\sigma^2}\left(y - w t - \beta_m(z)^\tr ( \gamma + \E\,\{\beta_m(Z)W\}(\theta - t))\right)^2 + C(w, z),
\end{equation*}
for $C(w, z)$ a term which does not depend on $(t, \theta, \gamma)$. Thus
\begin{align*}
    & \dot{l}_m(t, \theta, \gamma)(x) = \frac{1}{\sigma^2}\left(y - w t - \beta_m(z)^\tr ( \gamma + \E\,\{\beta_m(Z)W\}(\theta - t))\right)\\
    & \qquad \qquad\qquad \qquad \times \left(
    w - \beta_m(z)^\tr \E\,\{\beta_m(Z)W\}\right),
\end{align*}
and
\begin{equation*}
    \ddot{l}_{m}(t, \theta, \gamma)(x) = -\frac{1}{\sigma^2}\left(w - \beta_m(z)^\tr \E\,\{\beta_m(Z)^\tr W\}\right)^2 = -\sigma^{-2}\left(w - b_m(z)\right)^2.
\end{equation*}
We first note that $\E_m \ddot{l}_m(t, \theta_0, \gamma_0) = -J_{m}$. Thus for the second part of the condition in Theorem~\ref{thm:get_Anh_LAN} it suffices to show that $n^{-1}\sum_{i=1}^n \sigma^{-2}(W_i - b_{m_n}(Z_i))^2 - J_{m_n} =o_{P_{m_n}^n}(1)$. This follows from the weak law of large numbers as the $\sigma^{-2}(W_i - b_{m_n}(Z_i))^2$ are uniformly $P_{m_n}$-integrable (see \citet{gut1992weaklaw}). To establish the first part of Theorem~\ref{thm:get_Anh_LAN}, let $h\in \R$ and set $\tilde\theta_n = \theta_0 + h / \sqrt{n}$. We then have
\begin{align*}
& \dot{l}_{m_n}(\theta_0, \tilde\theta_n, \hat{\gamma}(\tilde\theta_n))(X_i)\\
     & \qquad = \frac{1}{\sigma^2}
    \bigg(Y_i - W_i \theta_0 - \beta_{m_n}(Z_i)^\tr \bigg[\widehat{\gamma}(\tilde\theta_n)+ \E\,\{\beta_m(Z)W\}\frac{h}{\sqrt{n}}\bigg]\bigg)\left(W_i - b_m(Z_i)\right),
\end{align*}
where $\widehat{\gamma}(\theta)$ and $B_{m,n}$ are as defined in Section~\ref{sec:plm}. 
Therefore, the difference 
\begin{align*}
& \dot{l}_{m_n}(\theta_0, \tilde\theta_n, \widehat{\gamma}(\tilde\theta_n))(X_i) - \tilde{\ell}_{m_n}(X_i) \\
      &\; = \frac{1}{\sigma^2}[W_i - b_{m_n}(Z_i)]\beta_{m_n}(Z_i)^\tr\\ 
      & \qquad \times \left[
     (\widehat{\gamma}(\theta_0) - \gamma) + \frac{h}{\sqrt{n}}\left(\E\,\{\beta_{m_n}(Z)W\} - B_{m_n, n}^{-1}\meanin \beta_{m_n}(Z_i) W_i\right)
     \right].
\end{align*}
In consequence, to verify the remaining condition of Theorem~\ref{thm:get_Anh_LAN}, it suffices to show that
\begin{equation}
\begin{split}
    &\fracrootn\sumin (W_i - b_{0}(Z_i))\beta_{m_n}(Z_i)^\tr (\widehat{\gamma}(\theta_0) - \gamma); \\
    &\fracrootn\sumin  (b_{0}(Z_i)- b_{m_n}(Z_i))\beta_{m_n}(Z_i)^\tr (\widehat{\gamma}(\theta_0) - \gamma);\\
    &\meanin (W_i - b_{m_n}(Z_i))\beta_{m_n}(Z_i)^\tr \left[\E\,\{\beta_{m_n}(Z)W\} - B_{m_n, n}^{-1}\meanin \beta_{m_n}(Z_i)W_i\right],
\end{split}
\label{eq:three_conditions}
\end{equation}
are all $o_{P_{m_n}}(1)$. In order to verify this  we impose the following assumptions (which may be relaxed if more conditions are imposed upon $W$, see below). Let $\xi_{m}\define \sup_{z\in [0, 1]}\|\beta_{m_n}(z)\|$ and let $a_n$ be such that $\|b_0(Z) - b_{m_n}(Z)\|_{L_2(\nu)} \le a_{n}$. (Upper bounds on $a_n$ are available from approximation theory under, for example, smoothness conditions on $b_0$.) 
\begin{enumerate}[label=(pl\arabic*), series=plmconditions]\itemsep-0.2em
    \item \label{plm-cond-var-bound}$\E\,(W^2\given Z) \le C$, $P_Z$-almost surely;
    \item \label{plm-A-LAN-rates}
    $k_{m_n}^2 / \sqrt{n}\to 0$, $\xi_{m_n}k_{m_n} / \sqrt{n}\to 0$ and $a_{n}\sqrt{k_{m_n}}\xi_{m_n} \to 0$.
\end{enumerate}
For the first sum in \eqref{eq:three_conditions}, note that as for $k\neq i$
\begin{equation*}
    \E\,\left\{
        (W_i - b_0(Z_i))\beta_{m_n, j}(Z_i) (W_k - b_0(Z_k))\beta_{m_n, j}(Z_k)
    \right\} = 0
\end{equation*}
then for some positive constant $C_0$
\begin{align*}
        P_{m_n}\big(
        \sumin (W_i - b_{0}(Z_i))\beta_{m_n, j}(Z_i) \ge \sqrt{n}M    \big) 
        &\le \frac{2\E\,(\E\,(W^2 \given Z) + b_0(Z)^2) \beta_{m_n, j}(Z)^2}{M}\le \frac{C_0}{M},
    \end{align*}
which can be made arbitrary small by taking $M$ large enough. We also have
\begin{equation}\label{eq:plm-eq-belloni}
    \sum_{j=1}^{k_{m_n}} |\widehat{\gamma}_j(\theta_0) - \gamma_j| \le \sqrt{k_{m_n}}
    \big(\sum_{j=1}^{k_{m_n}}|\widehat{\gamma}_j(\theta_0) - \gamma_j|^2\big)^{1/2}
    =O_{P_{m_n}}(k_{m_n} / \sqrt{n}) 
\end{equation}
by the Cauchy--Schwarz inequality and Theorem~4.1 in~\citet{belloni2015some}. It follows that by taking $M =  M_n =k_{m_n}$ above and using the union bound that 
\begin{equation*}
     \sum_{j=1}^{k_{m_n}} (\hat{\gamma}_j(\theta_0) - \gamma_j) \fracrootn\sum_{i=1}^{n} \beta_{m_n, j}(Z_i) (W_i - b_0(Z_i)) = 
     O_{P_{m_n}}(k_{m_n}^2 n^{-1/2})= 
     o_{P_{m_n}}(1),
\end{equation*}
which verifies the first term. The second holds as by Theorem 4.1 in \cite{belloni2015some} again, 
\begin{align*}
     &\big|\fracrootn\sumin  (b_{0}(Z_i)- b_{m_n}(Z_i))\beta_{m_n}(Z_i)^\tr (\hat{\gamma}(\theta_0) - \gamma)\big|\\ 
     & \qquad \le \sqrt{n}\xi_{m_n} \|\hat{\gamma}(\theta_0) - \gamma\| \meanin |b_{0}(Z_i)- b_{m_n}(Z_i)|\\
     &\qquad = O_{P_{m_n}}\left(\sqrt{n}\xi_{m_n}\sqrt{k_{m_n}}n^{-1/2}a_n 
     \right) = O_{P_{m_n}}\left(\xi_{m_n}\sqrt{k_{m_n}}a_n \right)  = o_{P_{m_n}}(1),
\end{align*}
as $P_{m_n}\big(
    n^{-1}\sum_{i=1}^n |b_0(Z_i) - b_{m_n}(Z_i)| \ge Ma_n  
    \big) \le  \|b_0 - b_{m_n}\|_{L_1(P_Z)}/M a_n \le 1/M$. Finally, we can rewrite the third sum in~\eqref{eq:three_conditions} as $ n^{-1}\sum_{i=1}^n (W_i - b_{m_n}(Z_i)) \beta_{m_n}(Z_i)^\tr (\pi - \widehat{\pi})$, where $\pi = \E\,\{\beta_{m_n}(Z)W\}$ and $\widehat{\pi}$ is its empirical counterpart. Using, once more, Theorem 4.1 in \cite{belloni2015some} we have 
\begin{align*}
    \big|\meanin (W_i - b_{m_n}(Z_i)) \beta_{m_n}(Z_i)^\tr (\pi - \widehat{\pi})\big| &\le \xi_{m_n} \|\pi - \widehat{\pi}\|_{2} \meanin |W_i - b_{m_n}(Z_i) |\\
    &= O_{P_{m_n}}(\xi_{m_n}\sqrt{k_{m_n} / n}),
\end{align*}
as $P_{m_n}\big(
    n^{-1}\sum_{i=1}^n |W_i - b_{m_n}(Z_i) | \ge M
    \big) \le M^{-1} (\E\,|W| + \E\, |b_0(Z) | + a_n)$. Here we remark that the rate conditions in~\ref{plm-A-LAN-rates} can be relaxed if we impose additional conditions on $W$. In particular, if we replace~\ref{plm-A-LAN-rates} with 
\begin{enumerate}
    \item[(pl2$^\star$)] $W$ is sub-exponential with parameters $(\nu, \alpha)$,
     $\xi_{m_n}k_{m_n} / \sqrt{n}\to 0$ \& $a_{n}\sqrt{k_{m_n}}\xi_{m_n} \to 0$
\end{enumerate}
then the second and third terms can be shown to be $o_{P_{m_n}}(1)$ exactly as before but we can refine the argument relating to the first term. In particular,~\eqref{eq:plm-eq-belloni} holds as above, but the probabilistic bound on $n^{-1/2}\sum_{i=1}^n (W_i - b_{0}(Z_i))\beta_{m_n, j}(Z_i)$ can be improved. Since $b_0(Z_i)$ is bounded by, say $C_0$ and $\beta_{m_n, j}(Z_i)$ is bounded by $\xi_{m_n}$, $(W - b_{0}(Z))\beta_{m_n, j}(Z)$ is also subexponential with parameters $(\xi_{m_n}(\nu + C_0), \alpha)$. Hence for all $t\ge 0$ the Bernstein-type bound below holds (e.g.,~\citet[Eq.~(2.18), p.~29]{wainwright2019high})
\begin{equation*}
    P_{m_n}\left(\left|
        \meanin (W_i - b_{0}(Z_i))\beta_{m_n, j}(Z_i)\right|
    \ge t \right)  \le 2\exp\left(
    -\min\left\{\frac{nt}{2\alpha},     \frac{nt^2}{2\xi_{m_n}^2(\nu + C_0)^2}\right\} \right).
\end{equation*}
Hence taking $t = t_n = 2(\nu + C_0)\xi_{m_n} \sqrt{\log k_{m_n}} / \sqrt{n}$ we have that 
\begin{align*}
     &k_{m_n} P_{m_n}\left(\left|
        \meanin (W_i - b_{0}(Z_i))\beta_{m_n, j}(Z_i)\right|
    \ge t \right)
    \\  
    &\qquad\qquad\qquad \le k_{m_n}2\exp\left(-\frac{n n^{-1} 4\xi_{m_n}^2(\nu + C_0)^2 \log k_{m_n}}{2\xi_{m_n}^2(\nu + C_0)^2}\right)\\
    &\qquad\qquad\qquad= 2k_{m_n} \exp(-2 \log k_{m_n}) = 2/k_{m_n}\to0.
\end{align*}
Thus by~\eqref{eq:plm-eq-belloni} and the union bound
   \begin{equation*}
     \sum_{j=1}^{k_{m_n}} (\widehat{\gamma}_j(\theta_0) - \gamma_j) \fracrootn\sum_{i=1}^{n} \beta_{m_n, j}(Z_i) (W_i - b_0(Z_i)) = 
     O_{P_{m_n}}\big(\frac{\xi_{m_n}
     \sqrt{\log k_{m_n}} k_{m_n}}{n}\big)= 
     o_{P_{m_n}}(1).
\end{equation*}
Finally we provide an example of $\beta_{m_n}$ for which $h_{m_n, n} \to h$ in $L_2(P_0)$. In particular, let $\beta_{m_n}$ be (orthonormalised) B-splines of degree $l$ with equally spaced knots. Then if 
\begin{enumerate}[resume*=plmconditions, label = (pl\arabic*)]
    \item \label{plm-eta-smooth} $\eta$ is $s$-times continuously differentiable and $l+1\ge s$
    \item \label{plm-eta-rate} $k_{m_n}^{-s}\sqrt{n}\to 0$
\end{enumerate}
it follows from Theorem~20.3 in~\cite{powell1981approximation} that if $\gamma$ is chosen optimally then $\|\beta_{m_n}(z)^\tr \gamma  - \eta_0(z)\|_{\infty} =O(k_{m_n}^{-s})$ un turn implying that $\sqrt{n} \|\beta_{m_n}^\tr \gamma - \eta_0\|_{L_2(P_Z)} = O(\sqrt{n}k_{m_n}^{-s}) = o(1)$, and thus $h_{m_n, n} \to 0$ in $L_2(P_0)$.

\begin{remark} Locally constant basis functions are easy to work with, and provide very intelligible results. Consider therefore the sequence of orthonormal basis function $\beta_{m,j}$ that take the form $\beta_{m,j}(z) = I_{V_{m,j}}(z)/(\E\, I_{V_{m,j}}(Z) )^{1/2}$ for $j = 1,\ldots,k_m, \, k_m \geq 1$, where $V_{m,j} = \{z \colon (j-1)/k_m \leq z < j/k_m\}$ for $j = 1,\ldots,k_m$, and $V_{m,1}\cup \cdots \cup V_{m,k_m} = [0,1]$ for all $m$. Consider the following conditions:
 \begin{enumerate}[label=(lc\arabic*)]\itemsep-0.2em
 \item\label{cond:pl2} $Z$ has density $f_Z$ that is continuously differentiable and positive on $[0,1]$; 
 \item\label{cond:pl3} The function $z \mapsto \E\,(W^k \given Z = z)$ is continuously differentiable for $k=1,2$;
 \item\label{cond:pl5} $\eta_0 \in \mc{H}$, where $\mc{H}$ consists of all continuously differentiable functions $\eta \colon [0,1] \to \real$.
 \end{enumerate}
With these basis function, one may use Theorem~\ref{thm:get_Anh_LAN} to show that the quadratic expansion in~\eqref{eq:Anh_LAN} holds provided the subsequence $(m_n)$ is chosen so that 
$n/k_{m_n} \to \infty$; and that Assumption~\ref{ass:sieve-approx}, i.e., contiguity, is satisfied provided $\sqrt{n}/k_{m_n} \to 0$. 
\end{remark}

\subsection{The Cox model}\label{appendix:cox}
In this appendix we provide some details left out of Section~\ref{subsec:cox}. We need to show that $\log \dd P_{m_n}^n/\dd P_0^n = o_{P_0}(1)$; that the scores are indeed uniformly $P_{m_n}$-integrable; and that $A_{n}^{\rm cox}(h)$ admits a quadratic expansion, as claimed. Introduce 
\begin{equation}
S_n^{(k)}(t,\theta) = \frac{1}{n}\sum_{i=1}^n Y_i(t) W_i^k \exp(\theta W_i) ,\quad \text{for $k = 0,1,2$},
    \notag
\end{equation}
and define $s_{\theta,\eta}^{(k)}(t) = \E_{\theta,\eta}\,Y(t)W^k \exp(\theta W)$ for $k = 0,1,2,3$, so that due to the data being i.i.d., $s_{\theta,\eta}^{(k)}(t) = \E_{\theta,\eta}\,S_n^{(k)}(t,\theta)$. Write $s^{(k)} = s_{\theta_0,\eta_0}^{(k)}$ and $s_m^{(k)} = s_{\theta_0,T_m^{-1}\gamma_0}^{(k)}$, and define
\begin{equation}
v_{m,j} = 
\frac{\int_{V_{m,j}}s_m^{(2)}(s)\,\dd s}{\int_{V_{m,j}}s_m^{(0)}(s)\,\dd s}
- 
\bigg(\frac{\int_{V_{m,j}}s_m^{(1)}(s)\,\dd s}{\int_{V_{m,j}}s_m^{(0)}(s)\,\dd s}\bigg)^2,\quad \text{and}\quad 
v(t) = \frac{s^{(2)}(t)}{s^{(0)}(t)}
- \bigg(\frac{s^{(1)}(t)}{s^{(0)}(t)}\bigg)^2.
\notag
\end{equation}
We make the following assumptions:
\begin{enumerate}[label=(cx\arabic*)]\itemsep-0.2em
\item\label{cond:cx1} $\E\, W^k \exp(\theta W) < \infty$ for $k = 0,1,2,3$, for all $\theta$ in a neighbourhood of $\theta_0$ and $\E[|W|^{2+\delta}]<\infty$;
\item\label{cond:cx2} The baseline hazard $\eta_0$ is continuously differentiable and positive on $[0,1]$;
\item\label{cond:cx3} In a neighbourhood $U_{\theta_0}$ of $\theta_0$, 
\begin{equation}
\sup_{t \in [0,1] ,\theta \in U}|S_n^{(k)}(t,\theta) - s_{\theta,\eta_0}^{(k)}(t)| = o_{P_0}(1), \quad\text{for $k = 0,1,2$}.
\notag
\end{equation}
\item\label{cond:cx4} For a neighbourhood $U_{\eta_0}$ of $\eta_0$, the functions $(\theta,\eta)\mapsto s_{\theta,\eta}^{(k)}(t)$ for $k = 0,1,2,3$ are continuous on $U_{\eta_0}\times U_{\theta_0}$, uniformly in $t \in [0,1]$; they are bounded on $U_{\eta_0}\times U_{\theta_0}\times[0,1]$, and $s_{\theta,\eta}^{(0)}$ is bounded below on $U_{\eta_0}\times U_{\theta_0}\times[0,1]$, and $J = \int_0^1 v(t) s^{(0)}(t) \eta_0(t)\,\dd t$ is positive.
\end{enumerate}
Assumption~\ref{cond:cx2}, \ref{cond:cx3}, and \ref{cond:cx4} are similar to Assumption~A., B., and D.~of the canonical Cox regression paper \citet[p.~1105]{andersen1982cox}.

We start with the details involved in showing Assumption~\ref{ass:sieve-approx}. Let $(m_n)_{n \geq 1}$ be such that $\sqrt{n}/k_{m_n} \to 0$, and recall that 
\begin{equation} 
\log \frac{\dd P_{m_n}^n}{\dd P_0^n} = \frac{1}{\sqrt{n}}\sum_{i=1}^n \int_0^1 \frac{h_{m_n,n}}{\eta_0}\,\dd M_i
- \frac{1}{2n}\sum_{i=1}^n \int_0^1 \frac{h_{m_n,n}^2}{\eta_0^2}\,\dd N_i + r_{m_n,n},
\label{eq:cox_lr_ratio}
\end{equation}
with $h_{m,n}(t) = \sqrt{n}( \beta_m(t)^{\tr}\gamma_0 - \eta_0(t))$; $r_{m,n} = n^{-1}\sum_{i=1}^n\int_0^1 h_{m,n}^2/\eta_0^2R(h_{m,n}/(\sqrt{n}\eta_0))\,\dd N_i$; and $R$ is the function defined via the Taylor expansion $\log( 1 + a) = a - \half a^2 + a^2 R(a)$, in particular $R(a) = \half \big(1 - 1/(1 + \tilde{a})^2 \big)$, where $\tilde{a}$ is some point between $0$ and $a$. With the basis functions in~\eqref{eq:cox_piecewisebasis}, we have that $h_{m_n,n} \to 0$ in $L_2(\dd t)$ if and only if $(m_n)_{n \geq 1}$ is such that $\sqrt{n}/k_{m_n}\to 0$. But then 
\begin{equation}
\E_0\, \bigg( 
\frac{1}{\sqrt{n}}\sum_{i=1}^n \int_0^1 \frac{h_{m_n,n}(t)}{\eta_0(t)}\,\dd M_i(t)
\bigg)^2
=
\int_0^1 
\frac{h_{m_n,n}(t)^2}{\eta_0(t)^2} s^{(0)}(t) \eta_0(t) \,\dd t
= o(1), 
    \notag
\end{equation}
by the $L_2(\dd t)$ convergence $h_{m_n,n} \to 0$, combined with~\ref{cond:cx1} and~\ref{cond:cx4}. Similarly, the second term on the right in~\eqref{eq:cox_lr_ratio} is a nonnegative random variable with expectation
\begin{equation}
\E_0\, \frac{1}{n}\sum_{i=1}^n \int_0^1 \frac{h_{m_n,n}^2}{\eta_0^2}\,\dd N_i
= \int_0^1 \frac{h_{m_n,n}(t)^2}{\eta_0(t)^2} s^{(0)}(t) \eta_0(t)\,\dd t = o(1).
    \notag
\end{equation}
Thus both these terms tend to zero in probability by Markov{'}s inequality. To show that the remainder $r_{m_n,n}$ vanishes, note that since $\eta_0$ is continuous on $[0,1]$, hence uniformly continuous we can given $0 < \eps < 1$ find $n_0$ such that $\sup_{t\in [0,1]}|\beta_{m_n}(t)^{\tr}\gamma_0 - \eta_0(t)| < \eps$ for all $n \geq n_0$. But then, by the triangle inequality $|R(n^{-1/2} h_{m_n,n}(t)^2/\eta_0(t)^2 ) | \leq \half \big(1 + 1/(1 - \eps)\big)$ for all $n \geq n_0$. Consequently, $\E_0\, |r_{m_n,n}| \leq \half (1 + 1/(1 - \eps) ) \int_0^1 \big(h_{m_n,n}(t)^2/\eta_0(t)^2\big) s^{(0)}(t) \eta_0(t)\,\dd t$ for all $n \geq n_0$, and we have convergence to zero, and $r_{m_n,n} = o_{P_0}(1)$ by Markov{'}s inequality.  

It remains to verify~\eqref{eq:score-approx-parametric} and \eqref{eq:score-approx-nonparametric} of Lemma~\ref{lem:score-approx-implies-effscr-approx}, which we do using Lemma~\ref{lemma:effscr-approx-based-on-Pmn-TV}. To this end, consider the submodels $\tau \mapsto p_{\theta_0 + a \tau, \eta_0 + b \tau}$ where $a \in \real$ and $b \in B \subset \mc{H}$. Differentiating with respect to $\tau$, and evaluating in $\tau = 0$
\begin{equation}
\frac{\dd}{\dd \tau} \log p_{\theta_0 + a \tau, \eta_0 + b \tau}(X) \big|_{\tau = 0} = a \dot{\ell}_{\theta_0,\eta_0} + Db, 
\notag
\end{equation}
where $\dot{\ell}(X) = W M(1)$ and $Db = \int_0^1 b(s)/\eta_0(s)\,\dd M(s)$. Then
\begin{align*}
|\dot{\ell}_{m_n}(X) - \dot{\ell}(X) | 
 & \leq |W|\exp(\theta_0W ) 
\sum_{j=1}^{k_{m_n}}\int_{V_{m_n,j}}|  \eta_0(s) - \eta_0((j-1)/k_{m_n}) |\,\dd s\\
& \leq |W|\exp(\theta_0W )\half \sup_{t\in [0,1]}\eta_0^{\prime}(t)/k_{m_n},  
\end{align*}
which, as $k_{m_n}\to \infty$, tends to zero in probability by~\ref{cond:cx1}. 
Next, let $(\gamma_m)_{m \geq 1}$ be a sequence of vectors such that $\gamma_m^{\tr}\beta_m \to b$ uniformly on $[0, 1]$. We have the bound
\begin{align*}
& |\gamma_{m_n}^{\tr}\dot{v}_{m_n}(X) - Db(X) |\\
&\qquad \qquad \leq 
|\int_0^1 
\bigg( \frac{\gamma_{m_n}^{\tr}\beta_{m_n}(s)}{\gamma_0^{\tr}\beta_{m_n}(s)} - \frac{b(s)}{\eta_0(s)}        \bigg)\,\dd M^{m_n}(s)|
+\sup_{s\in [0,1]}\frac{b(s)}{\eta_0(s)} \exp(\theta_0W)/k_{m_n}.
\end{align*}
Here the first term on the right tends to zero in probability by an application of the It{\^o} isometry followed by Markov{'}s inequality (or, alternatively, Lenglart{'}s inequality). The second term tends to zero in probability by~\ref{cond:cx1} and \ref{cond:cx2}. 

In order to show that both $\dot{\ell}_{m_n}^2$ and $(\gamma_{m_n}^{\tr}\dot{v}_{m_n})^2$ are uniformly $P_{m_n}$-integrable, we may argue as follows. We start with  $\dot{\ell}_{m_n}^2$. Since its compensator is continuous, the quadratic variation of $M^{m_n}$ is $[M^{m_n}, M^{m_n}](t) = N(t)$.
  Let $Z^{m_n}\define WM^{m_n}(t)$ and note that $Z^{m_n}(1) = \dot{\ell}_{m_n}(X)$. 
  It is easy to check that $(Z^{m_n}(t))_{t\in[0, 1]}$ is a martingale.
  We also have that $Z^{m_n}(0) = W M^{m_n}(0) = W N(0) = 0$ a.s. since $T\ge 0$ and $T=0$ has zero probability. By the definition of quadratic variation, it is clear that $[Z^{m_n}, Z^{m_n}](t) = W^2N(t)$.
  Clearly $\dot{\ell}_{m_n}(X)^{2+\delta} \le \sup_{t\in [0, 1]} |Z^{m_n}(t)|^{2+\delta}$. Thus, using the Burkholder-Davis-Gundy inequality (e.g., \citet[Theorem~11.5.5, p.~249]{cohen2015stochastic}),
  \begin{equation*}
    \E_{m_n}\left[\sup_{t\in [0, 1]} |Z^{m_n}(t)|^{2+\delta}\right] \le C\, \E[W^{2+\delta}N(1)^{1+\delta/2}]\le C\,\E[W^{2+\delta}],
  \end{equation*}
  which is finite by Condition~\ref{cond:cx1}, and hence $\dot{\ell}_{m_n}^2$ is uniformly $P_{m_n}$-integrable.
  
  For $(\gamma_{m_n}^{\tr}\dot{v}_{m_n})^2$ we first note that by~\ref{cond:cx2} and the fact that $\gamma_{0}^\tr \beta_{m_n}\to \eta_0$ uniformly (as noted above), it follows straightforwardly that \begin{equation*}
    \liminf_{n\to\infty}\inf_{s\in [0, 1]} \gamma_{0}^\tr \beta_{m_n}(s) \ge c > 0,
  \end{equation*}
  for some $c$. Combining this with the uniform convergence of $\gamma_{m_n}^\tr\beta_{m_n}$ to $b$, it follows that for all large enough $n$, the ratio $\gamma_{m_n}^\tr\beta_{m_n}  /\gamma_{0}^\tr\beta_{m_n}$ is bounded.  Hence by Proposition II.4.1. in \cite{andersen1993statistical}, $K^{m_n}(t)\define \int_0^t (\gamma_{m_n}^\tr\beta_{m_n})/(\gamma_{0}^\tr\beta_{m_n})\darg{M^{m_n}}$ is a local square integrable martingale with $K^{m_n}(0)=0$ a.s., and 
  \begin{align*}
    \left[K^{m_n},\, K^{m_n}\right](t) 
     \le  \int_0^1 \left(\frac{\gamma_{m_n}^\tr\beta_{m_n}(s)}{\gamma_{0}^\tr\beta_{m_n}(s)}\right)^2\darg{N(s)}
    = \left(\frac{\gamma_{m_n}^\tr\beta_{m_n}(T)}{\gamma_{0}^\tr\beta_{m_n}(T)}\right)^2.
  \end{align*}
  For all large enough $n$, we have that $|\gamma_{m_n}^\tr\beta_{m_n}(s)| \le (\overline{b} + 1)$ where $\overline{b}\define \sup_{t\in [0, 1]} |b(t)|$ and also
  $\inf_{s\in[0, 1]}\gamma_{0}^\tr\beta_{m_n}(s) \ge c > 0$. Thus for such $n$ the right hand side above can be further bounded above by 
  \begin{equation*}
    \left(\frac{\gamma_{m_n}^\tr\beta_{m_n}(T)}{\gamma_{0}^\tr\beta_{m_n}(T)}\right)^2 \le \frac{\overline{b}+ 1}{c}.
  \end{equation*}
  Then by the Burkholder-Davis-Gundy inequality for a $\delta>0$, and all large enough $n$, 
  \begin{equation*}
    \E[(\gamma_{m_n}^\tr \dot{v}_{m_n})^{2+\delta}]\le \E\left[\sup_{t\in[0, 1]}K^{m_n}(t)^{2+\delta}\right]\le C \left(\frac{\overline{b} + 1}{c}\right)^{1+\delta/2},
  \end{equation*}  
which is finite, hence $(\gamma_{m_n}^{\tr}\dot{v}_{m_n})^2$ un uniformly $P_{m_n}$-integrable. 
\end{appendix}

\bibliographystyle{plainnat} 
\bibliography{refs_spcontig}

\begin{thebibliography}{33}
\providecommand{\natexlab}[1]{#1}
\providecommand{\url}[1]{\texttt{#1}}
\expandafter\ifx\csname urlstyle\endcsname\relax
  \providecommand{\doi}[1]{doi: #1}\else
  \providecommand{\doi}{doi: \begingroup \urlstyle{rm}\Url}\fi

\bibitem[Andersen and Gill(1982)]{andersen1982cox}
Per~Kragh Andersen and Richard~D Gill.
\newblock Cox{'}s regression model for counting processes: {A} large sample study.
\newblock \emph{The Annals of Statistics}, 10:\penalty0 1100--1120, 1982.

\bibitem[Andersen et~al.(1993)Andersen, Borgan, Gill, and Keiding]{andersen1993statistical}
Per~Kragh Andersen, {\O}rnulf Borgan, Richard~D Gill, and Niels Keiding.
\newblock \emph{{S}tatistical {M}odels {B}ased on {C}ounting {P}rocesses}.
\newblock Springer, 1993.

\bibitem[Belloni et~al.(2015)Belloni, Chernozhukov, Chetverikov, and Kato]{belloni2015some}
Alexandre Belloni, Victor Chernozhukov, Denis Chetverikov, and Kengo Kato.
\newblock Some new asymptotic theory for least squares series:~pointwise and uniform results.
\newblock \emph{Journal of Econometrics}, 186:\penalty0 345--366, 2015.

\bibitem[Bickel et~al.(1998)Bickel, Klaassen, Ritov, and Wellner]{BKRW98}
Peter~J. Bickel, Chris. A.~J. Klaassen, Ya'acov Ritov, and Jon~A. Wellner.
\newblock \emph{Efficient and Adaptive Estimation for Semiparametric Models}.
\newblock Springer, New York, 1998.

\bibitem[Billingsley(1968)]{billingsley1968}
Patrick Billingsley.
\newblock \emph{{C}onvergence of {P}robability {M}easures}.
\newblock John Wiley {\&} Sons, New York, 1968.

\bibitem[Chamberlain(1986)]{chamberlain86asymptotic}
Gary Chamberlain.
\newblock Asymptotic efficiency in semi-parametric models with censoring.
\newblock \emph{Journal of Econometrics}, 32\penalty0 (2):\penalty0 189--218, 1986.
\newblock ISSN 0304-4076.
\newblock \doi{https://doi.org/10.1016/0304-4076(86)90038-2}.
\newblock URL \url{https://www.sciencedirect.com/science/article/pii/0304407686900382}.

\bibitem[Chen(2007)]{chen07survey}
Xiaohong Chen.
\newblock Chapter 76 large sample sieve estimation of semi-nonparametric models.
\newblock volume~6 of \emph{Handbook of Econometrics}, pages 5549--5632. Elsevier, 2007.
\newblock \doi{https://doi.org/10.1016/S1573-4412(07)06076-X}.
\newblock URL \url{https://www.sciencedirect.com/science/article/pii/S157344120706076X}.

\bibitem[Chen and Shen(1998)]{chenshen98}
Xiaohong Chen and Xiaotong Shen.
\newblock Sieve extremum estimates for weakly dependent data.
\newblock \emph{Econometrica}, 66\penalty0 (2):\penalty0 289--314, 1998.
\newblock ISSN 00129682, 14680262.
\newblock URL \url{http://www.jstor.org/stable/2998559}.

\bibitem[Cohen and Elliott(2015)]{cohen2015stochastic}
Samuel~N Cohen and Robert~James Elliott.
\newblock \emph{Stochastic {C}alculus and {A}pplications. {S}econd {E}dition}.
\newblock Birkh{\"a}user, 2015.

\bibitem[Feinberg et~al.(2016)Feinberg, Kasyanov, and Zgurovsky]{feinberg2016uniform}
Eugene~A. Feinberg, Pavlo~O. Kasyanov, and Michael~Z. Zgurovsky.
\newblock Uniform {F}atou's lemma.
\newblock \emph{Journal of Mathematical Analysis and Applications}, 444:\penalty0 550--567, 2016.

\bibitem[Gill(1984)]{gill1984understanding}
Richard~D Gill.
\newblock Understanding {C}ox's regression model: {A} martingale approach.
\newblock \emph{Journal of the American Statistical Association}, 79:\penalty0 441--447, 1984.

\bibitem[Grenander(1981)]{grenander1981abstract}
U.~Grenander.
\newblock \emph{Abstract Inference}.
\newblock Probability and Statistics Series. Wiley, 1981.
\newblock ISBN 9780471082675.

\bibitem[Gut(1992)]{gut1992weaklaw}
Allan Gut.
\newblock The weak law of large numbers for arrays.
\newblock \emph{Statistics \& Probability Letters}, 14:\penalty0 49--52, 1992.

\bibitem[Hjort and Pollard(1993)]{HjortPollard93}
N.~L. Hjort and D.~B. Pollard.
\newblock Asymptotics for minimisers of convex processes.
\newblock Technical report, Department of Mathematics, University of Oslo, 1993.

\bibitem[Hoesch et~al.(2024)Hoesch, Lee, and Mesters]{hoesch24locally}
Lukas Hoesch, Adam Lee, and Geert Mesters.
\newblock Locally robust inference for non-gaussian {SVAR} models.
\newblock \emph{Quantitative Economics}, 15:\penalty0 523--570, 2024.

\bibitem[Jacod and Shiryaev(2003)]{jacod2003limit}
Jean Jacod and Albert Shiryaev.
\newblock \emph{{L}imit {T}heorems for {S}tochastic {P}rocesses. {S}econd {E}dition}.
\newblock Springer, Berlin, 2003.

\bibitem[Kallenberg(2002)]{kallenberg2002foundations}
Olav Kallenberg.
\newblock \emph{Foundations of {M}odern {P}robability. {S}econd {E}dition}.
\newblock Springer, Berlin, 2002.

\bibitem[Kosorok(2008)]{kosorok2008introduction}
Michael~R Kosorok.
\newblock \emph{Introduction to {E}mpirical {P}rocesses and {S}emiparametric {I}nference}.
\newblock Springer, New York, 2008.

\bibitem[Le~Cam(1986)]{lecam1986}
Lucien Le~Cam.
\newblock \emph{Asymptotic {M}ethods in {S}tatistical {D}ecision {T}heory}.
\newblock Springer, New York, 1986.

\bibitem[Mammen and van~de Geer(1997)]{mammen1997penalized}
Enno Mammen and Sara van~de Geer.
\newblock Penalized quasi-likelihood estimation in partial linear models.
\newblock \emph{The Annals of Statistics}, 25:\penalty0 1014--1035, 1997.

\bibitem[Murphy and van~der Vaart(2000)]{murphy2000profile}
Susan~A Murphy and Aad~W van~der Vaart.
\newblock On profile likelihood.
\newblock \emph{Journal of the American Statistical Association}, 95:\penalty0 449--465, 2000.

\bibitem[Mykland and Zhang(2009)]{mykland2009inference}
Per~A Mykland and Lan Zhang.
\newblock Inference for continuous semimartingales observed at high frequency.
\newblock \emph{Econometrica}, 77:\penalty0 1403--1445, 2009.

\bibitem[Oosterhoff and Van~Zwet(1979)]{oosterhoff1979note}
J.~Oosterhoff and W.~R. Van~Zwet.
\newblock A note on contiguity and {H}ellinger distance.
\newblock In J.~Jureckova, editor, \emph{Contributions to Statistics, Jaroslav Hajek Memorial Volume}, pages 157--166. Reidel, Dordrecht, 1979.

\bibitem[Powell(1981)]{powell1981approximation}
M.~J.~D. Powell.
\newblock \emph{Approximation Theory and Methods}.
\newblock Cambridge University Press, 1981.

\bibitem[Riesz(1928)]{riesz1928convergence}
Frigyes Riesz.
\newblock Sur la convergence en moyenne.
\newblock \emph{Acta Sci. Math. (Szeged)}, 4:\penalty0 58--64, 1928.

\bibitem[Royden and Fitzpatrick(2010)]{royden2010real}
Halsey~Lawrence Royden and Patrick Fitzpatrick.
\newblock \emph{{R}eal {A}nalysis. {F}ourth {E}dition}.
\newblock Pearson Education Asia Limited and China Machine Press, 2010.

\bibitem[Serfozo(1982)]{serfozo1982}
Richard Serfozo.
\newblock Convergence of {L}ebesgue integrals with varying measures.
\newblock \emph{Sankhyā: The Indian Journal of Statistics, Series A (1961-2002)}, 44\penalty0 (3):\penalty0 380--402, 1982.

\bibitem[Shen(1997)]{shen1997methods}
Xiaotong Shen.
\newblock On methods of sieves and penalization.
\newblock \emph{The Annals of Statistics}, 25:\penalty0 2555--2591, 1997.

\bibitem[Strasser(1985)]{strasser1985mathematical}
Helmut Strasser.
\newblock \emph{Mathematical Theory of Statistics}.
\newblock de Gruyter, Berlin, New York, 1985.
\newblock URL \url{https://doi.org/10.1515/9783110850826}.

\bibitem[Szeg{\H{o}}(1975)]{szego1975orthogonal}
G.~Szeg{\H{o}}.
\newblock \emph{Orthogonal Polynomials}.
\newblock American Math. Soc: Colloquium publ. American Mathematical Society, 1975.
\newblock ISBN 9780821810231.

\bibitem[van~der Vaart(1988)]{vandervaart1988large}
Aad~W. van~der Vaart.
\newblock \emph{Statistical Estimation in Large Parameter Spaces}.
\newblock Number~44 in CWI Tracts. Centrum voor Wiskunde en Informatica, Amsterdam, 1988.

\bibitem[van~der Vaart(1998)]{vandervaart1998asymptotic}
Aad~W van~der Vaart.
\newblock \emph{Asymptotic {S}tatistics}.
\newblock Cambridge University Press, Cambridge, 1998.

\bibitem[Wainwright(2019)]{wainwright2019high}
M.J. Wainwright.
\newblock \emph{High-Dimensional Statistics: A Non-Asymptotic Viewpoint}.
\newblock Cambridge University Press, Cambridge, 2019.

\end{thebibliography}

\end{document}